\def\arXiv#1{arXiv:\href{http://arXiv.org/abs/#1}{#1}}
\newtheorem{theorem}{Theorem}[section]
\newtheorem{conjecture}[theorem]{Conjecture}
\numberwithin{figure}{section}
\numberwithin{table}{section}
\numberwithin{equation}{section}
\newcommand{\C}{\mathbb{C}}
\newcommand{\R}{\mathbb{R}}
\newcommand{\Z}{\mathbb{Z}}
\newcommand{\F}{\mathbb{F}}
\newcommand{\Q}{\mathbb{Q}}
\newcommand{\Smat}{\mathbb{S}}
\date{March 25, 2024}
\title[Optimality of spherical codes]{Optimality of spherical codes via exact semidefinite programming bounds}
\author{Henry Cohn}
\address{Microsoft Research New England, One Memorial Drive, Cambridge, MA
02142, USA}
\email{cohn@microsoft.com}
\author{David de Laat}
\address{Delft Institute of Applied Mathematics, Delft University of Technology, Delft, The Netherlands}
\email{d.delaat@tudelft.nl}
\author{Nando Leijenhorst}
\address{Delft Institute of Applied Mathematics, Delft University of Technology, Delft, The Netherlands}
\email{n.m.leijenhorst@tudelft.nl}
\begin{document}
 
\begin{abstract}
We show that the spectral embeddings of all known triangle-free strongly regular graphs are optimal spherical codes (the new cases are $56$~points in $20$~dimensions, $50$~points in $21$~dimensions, and $77$~points in $21$~dimensions), as are certain mutually unbiased basis arrangements constructed using Kerdock codes in up to $1024$ dimensions (namely, $2^{4k} + 2^{2k+1}$ points in $2^{2k}$ dimensions for $2 \le k \le 5$). As a consequence of the latter, we obtain optimality of the Kerdock binary codes of block length $64$, $256$, and $1024$, as well as uniqueness for block length~$64$. We also prove universal optimality for $288$ points on a sphere in $16$ dimensions. To prove these results, we use three-point semidefinite programming bounds, for which only a few sharp cases were known previously. To obtain rigorous results, we develop improved techniques for rounding approximate solutions of semidefinite programs to produce exact optimal solutions.
\end{abstract}

\maketitle

\section{Introduction}

The spherical code problem asks how to arrange $N$ points on the unit sphere $S^{n-1}$ in $\R^n$ so that the distance between the closest pair of points is maximized. In other words, we seek a subset $\mathcal{C} \subseteq S^{n-1}$ with $|\mathcal{C}|=N$ such that the maximal inner product
\[
t = \max_{\substack{x,y \in \mathcal{C}\\x \ne y}} \, \langle x,y \rangle % ad hoc space
\]
is minimized, or equivalently such that the minimal angle $\theta = \cos^{-1} t$ is maximized;
we can also view this problem as packing $N$ congruent spherical caps with angular radius $\theta/2$ chosen to be as large as possible. A \emph{spherical code} is any nonempty, finite subset $\mathcal{C}$ of $S^{n-1}$, and it is \emph{optimal} if it minimizes $t$ given $n$ and $|\mathcal{C}|$. This optimization problem arises naturally in geometry, information theory, and physics \cite{B04, BG09, C10, C17, CS99, EZ01, FT64}, and it remains unsolved in general. We will henceforth assume that $n>2$, because the cases $n=1$ and $n=2$ are trivial.

When $N \le 2n$, a systematic solution is possible. For $2 \le N \le n+1$, there is a unique optimal code up to isometry, namely a regular simplex centered at the origin, and the optimal value of $t$ is $-1/(N-1)$. For $N=2n$ there is again a unique optimal code, namely the regular cross polytope (equivalently, $n$ orthogonal pairs of antipodal points), and the optimal value of $t$ is $0$. When $n+1 < N < 2n$, the optimal value of $t$ remains $0$, but the optimal code is no longer unique. See Sections~6.1 and~6.2 of \cite{B04} for proofs, as well as the original papers \cite{A52, DH51, R55, S52}.

However, little is known when $N>2n$. For $N=2n+1$, Example~5.10.4 in \cite{EZ01} provides a compelling conjecture for the optimal code, but even the case $N=2n+2$ has resisted a general conjecture. As $N$ grows, finding the best spherical code becomes an increasingly complex and delicate optimization problem, with little hope of any simple description of the exact optimum. (See, for example, \cite{codetables} for the results of numerical optimization.) Even in particularly symmetric cases, such as the code shown in Figure~\ref{figure:plot}, proving optimality is a daunting task.

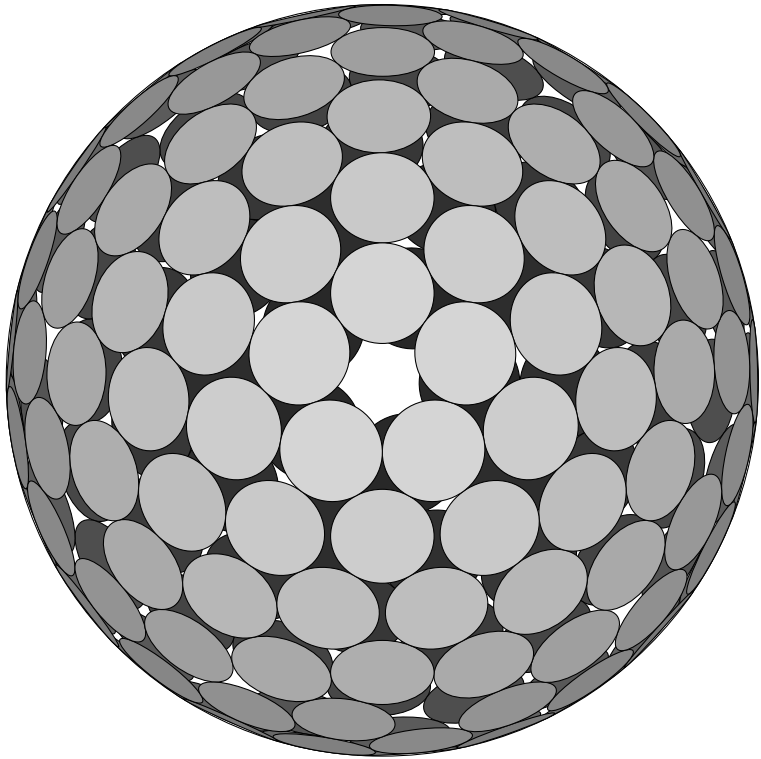
\begin{figure}
\centering
\begin{tikzpicture}
\draw[thin] (0,0) circle (5);
\fill[fill=black!84.200618, rotate around={-19.920335:(0.39502872,1.0900459)}] (0.39502872,1.0900459) ellipse (0.68803348 and 0.66891303);
\draw[thin, rotate around={-19.920335:(0.39502872,1.0900459)}] (0.39502872,1.0900459) ellipse (0.68803348 and 0.66891303);
\fill[fill=black!84.200618, rotate around={88.079665:(1.1587658,-0.038851941)}] (1.1587658,-0.038851941) ellipse (0.68803348 and 0.66891303);
\draw[thin, rotate around={88.079665:(1.1587658,-0.038851941)}] (1.1587658,-0.038851941) ellipse (0.68803348 and 0.66891303);
\fill[fill=black!84.200618, rotate around={16.079665:(0.32112794,-1.1140577)}] (0.32112794,-1.1140577) ellipse (0.68803348 and 0.66891303);
\draw[thin, rotate around={16.079665:(0.32112794,-1.1140577)}] (0.32112794,-1.1140577) ellipse (0.68803348 and 0.66891303);
\fill[fill=black!84.200618, rotate around={-55.920335:(-0.96029783,-0.64967357)}] (-0.96029783,-0.64967357) ellipse (0.68803348 and 0.66891303);
\draw[thin, rotate around={-55.920335:(-0.96029783,-0.64967357)}] (-0.96029783,-0.64967357) ellipse (0.68803348 and 0.66891303);
\fill[fill=black!84.200618, rotate around={52.079665:(-0.91462463,0.71253733)}] (-0.91462463,0.71253733) ellipse (0.68803348 and 0.66891303);
\draw[thin, rotate around={52.079665:(-0.91462463,0.71253733)}] (-0.91462463,0.71253733) ellipse (0.68803348 and 0.66891303);
\fill[fill=black!82.344326, rotate around={-55.920335:(1.7184244,1.1625715)}] (1.7184244,1.1625715) ellipse (0.68803348 and 0.62474596);
\draw[thin, rotate around={-55.920335:(1.7184244,1.1625715)}] (1.7184244,1.1625715) ellipse (0.68803348 and 0.62474596);
\fill[fill=black!82.344326, rotate around={52.079665:(1.6366936,-1.2750644)}] (1.6366936,-1.2750644) ellipse (0.68803348 and 0.62474596);
\draw[thin, rotate around={52.079665:(1.6366936,-1.2750644)}] (1.6366936,-1.2750644) ellipse (0.68803348 and 0.62474596);
\fill[fill=black!82.344326, rotate around={-19.920335:(-0.70689215,-1.9506047)}] (-0.70689215,-1.9506047) ellipse (0.68803348 and 0.62474596);
\draw[thin, rotate around={-19.920335:(-0.70689215,-1.9506047)}] (-0.70689215,-1.9506047) ellipse (0.68803348 and 0.62474596);
\fill[fill=black!82.344326, rotate around={16.079665:(-0.57464889,1.9935731)}] (-0.57464889,1.9935731) ellipse (0.68803348 and 0.62474596);
\draw[thin, rotate around={16.079665:(-0.57464889,1.9935731)}] (-0.57464889,1.9935731) ellipse (0.68803348 and 0.62474596);
\fill[fill=black!82.344326, rotate around={88.079665:(-2.0735770,0.069524394)}] (-2.0735770,0.069524394) ellipse (0.68803348 and 0.62474596);
\draw[thin, rotate around={88.079665:(-2.0735770,0.069524394)}] (-2.0735770,0.069524394) ellipse (0.68803348 and 0.62474596);
\fill[fill=black!81.284273, rotate around={-19.920335:(0.82725244,2.2827279)}] (0.82725244,2.2827279) ellipse (0.68803348 and 0.59967197);
\draw[thin, rotate around={-19.920335:(0.82725244,2.2827279)}] (0.82725244,2.2827279) ellipse (0.68803348 and 0.59967197);
\fill[fill=black!81.284273, rotate around={88.079665:(2.4266384,-0.081362092)}] (2.4266384,-0.081362092) ellipse (0.68803348 and 0.59967197);
\draw[thin, rotate around={88.079665:(2.4266384,-0.081362092)}] (2.4266384,-0.081362092) ellipse (0.68803348 and 0.59967197);
\fill[fill=black!81.284273, rotate around={16.079665:(0.67249254,-2.3330125)}] (0.67249254,-2.3330125) ellipse (0.68803348 and 0.59967197);
\draw[thin, rotate around={16.079665:(0.67249254,-2.3330125)}] (0.67249254,-2.3330125) ellipse (0.68803348 and 0.59967197);
\fill[fill=black!81.284273, rotate around={-55.920335:(-2.0110151,-1.3605189)}] (-2.0110151,-1.3605189) ellipse (0.68803348 and 0.59967197);
\draw[thin, rotate around={-55.920335:(-2.0110151,-1.3605189)}] (-2.0110151,-1.3605189) ellipse (0.68803348 and 0.59967197);
\fill[fill=black!81.284273, rotate around={52.079665:(-1.9153682,1.4921656)}] (-1.9153682,1.4921656) ellipse (0.68803348 and 0.59967197);
\draw[thin, rotate around={52.079665:(-1.9153682,1.4921656)}] (-1.9153682,1.4921656) ellipse (0.68803348 and 0.59967197);
\fill[fill=black!78.514421, rotate around={-69.301933:(2.9157673,1.1016642)}] (2.9157673,1.1016642) ellipse (0.68803348 and 0.53467100);
\draw[thin, rotate around={-69.301933:(2.9157673,1.1016642)}] (2.9157673,1.1016642) ellipse (0.68803348 and 0.53467100);
\fill[fill=black!78.514421, rotate around={-6.5387374:(-0.35494222,-3.0966722)}] (-0.35494222,-3.0966722) ellipse (0.68803348 and 0.53467100);
\draw[thin, rotate around={-6.5387374:(-0.35494222,-3.0966722)}] (-0.35494222,-3.0966722) ellipse (0.68803348 and 0.53467100);
\fill[fill=black!78.514421, rotate around={65.461263:(2.8354271,-1.2944945)}] (2.8354271,-1.2944945) ellipse (0.68803348 and 0.53467100);
\draw[thin, rotate around={65.461263:(2.8354271,-1.2944945)}] (2.8354271,-1.2944945) ellipse (0.68803348 and 0.53467100);
\fill[fill=black!78.514421, rotate around={-42.538737:(2.1073325,2.2966306)}] (2.1073325,2.2966306) ellipse (0.68803348 and 0.53467100);
\draw[thin, rotate around={-42.538737:(2.1073325,2.2966306)}] (2.1073325,2.2966306) ellipse (0.68803348 and 0.53467100);
\fill[fill=black!78.514421, rotate around={2.6980672:(-0.14672331,3.1134924)}] (-0.14672331,3.1134924) ellipse (0.68803348 and 0.53467100);
\draw[thin, rotate around={2.6980672:(-0.14672331,3.1134924)}] (-0.14672331,3.1134924) ellipse (0.68803348 and 0.53467100);
\fill[fill=black!78.514421, rotate around={38.698067:(1.9487666,-2.4326265)}] (1.9487666,-2.4326265) ellipse (0.68803348 and 0.53467100);
\draw[thin, rotate around={38.698067:(1.9487666,-2.4326265)}] (1.9487666,-2.4326265) ellipse (0.68803348 and 0.53467100);
\fill[fill=black!78.514421, rotate around={-33.301933:(-1.7113633,-2.6051101)}] (-1.7113633,-2.6051101) ellipse (0.68803348 and 0.53467100);
\draw[thin, rotate around={-33.301933:(-1.7113633,-2.6051101)}] (-1.7113633,-2.6051101) ellipse (0.68803348 and 0.53467100);
\fill[fill=black!78.514421, rotate around={74.698067:(-3.0064473,0.82257991)}] (-3.0064473,0.82257991) ellipse (0.68803348 and 0.53467100);
\draw[thin, rotate around={74.698067:(-3.0064473,0.82257991)}] (-3.0064473,0.82257991) ellipse (0.68803348 and 0.53467100);
\fill[fill=black!78.514421, rotate around={29.461263:(-1.5330240,2.7138903)}] (-1.5330240,2.7138903) ellipse (0.68803348 and 0.53467100);
\draw[thin, rotate around={29.461263:(-1.5330240,2.7138903)}] (-1.5330240,2.7138903) ellipse (0.68803348 and 0.53467100);
\fill[fill=black!78.514421, rotate around={-78.538737:(-3.0547935,-0.61935423)}] (-3.0547935,-0.61935423) ellipse (0.68803348 and 0.53467100);
\draw[thin, rotate around={-78.538737:(-3.0547935,-0.61935423)}] (-3.0547935,-0.61935423) ellipse (0.68803348 and 0.53467100);
\fill[fill=black!76.378523, rotate around={-20.681763:(1.2404606,3.2859466)}] (1.2404606,3.2859466) ellipse (0.68803348 and 0.48506528);
\draw[thin, rotate around={-20.681763:(1.2404606,3.2859466)}] (1.2404606,3.2859466) ellipse (0.68803348 and 0.48506528);
\fill[fill=black!76.378523, rotate around={87.318237:(3.5084443,-0.16433484)}] (3.5084443,-0.16433484) ellipse (0.68803348 and 0.48506528);
\draw[thin, rotate around={87.318237:(3.5084443,-0.16433484)}] (3.5084443,-0.16433484) ellipse (0.68803348 and 0.48506528);
\fill[fill=black!76.378523, rotate around={15.318237:(0.92787720,-3.3875111)}] (0.92787720,-3.3875111) ellipse (0.68803348 and 0.48506528);
\draw[thin, rotate around={15.318237:(0.92787720,-3.3875111)}] (0.92787720,-3.3875111) ellipse (0.68803348 and 0.48506528);
\fill[fill=black!76.378523, rotate around={-56.681763:(-2.9349847,-1.9292622)}] (-2.9349847,-1.9292622) ellipse (0.68803348 and 0.48506528);
\draw[thin, rotate around={-56.681763:(-2.9349847,-1.9292622)}] (-2.9349847,-1.9292622) ellipse (0.68803348 and 0.48506528);
\fill[fill=black!76.378523, rotate around={51.318237:(-2.7417975,2.1951615)}] (-2.7417975,2.1951615) ellipse (0.68803348 and 0.48506528);
\draw[thin, rotate around={51.318237:(-2.7417975,2.1951615)}] (-2.7417975,2.1951615) ellipse (0.68803348 and 0.48506528);
\fill[fill=black!73.826559, rotate around={-19.920335:(-1.3242544,-3.6541598)}] (-1.3242544,-3.6541598) ellipse (0.68803348 and 0.42640081);
\draw[thin, rotate around={-19.920335:(-1.3242544,-3.6541598)}] (-1.3242544,-3.6541598) ellipse (0.68803348 and 0.42640081);
\fill[fill=black!73.826559, rotate around={-55.920335:(3.2192056,2.1779002)}] (3.2192056,2.1779002) ellipse (0.68803348 and 0.42640081);
\draw[thin, rotate around={-55.920335:(3.2192056,2.1779002)}] (3.2192056,2.1779002) ellipse (0.68803348 and 0.42640081);
\fill[fill=black!73.826559, rotate around={16.079665:(-1.0765169,3.7346546)}] (-1.0765169,3.7346546) ellipse (0.68803348 and 0.42640081);
\draw[thin, rotate around={16.079665:(-1.0765169,3.7346546)}] (-1.0765169,3.7346546) ellipse (0.68803348 and 0.42640081);
\fill[fill=black!73.826559, rotate around={52.079665:(3.0660954,-2.3886383)}] (3.0660954,-2.3886383) ellipse (0.68803348 and 0.42640081);
\draw[thin, rotate around={52.079665:(3.0660954,-2.3886383)}] (3.0660954,-2.3886383) ellipse (0.68803348 and 0.42640081);
\fill[fill=black!73.826559, rotate around={88.079665:(-3.8845296,0.13024333)}] (-3.8845296,0.13024333) ellipse (0.68803348 and 0.42640081);
\draw[thin, rotate around={88.079665:(-3.8845296,0.13024333)}] (-3.8845296,0.13024333) ellipse (0.68803348 and 0.42640081);
\fill[fill=black!72.787786, rotate around={-75.539104:(3.8882630,1.0027425)}] (3.8882630,1.0027425) ellipse (0.68803348 and 0.40271314);
\draw[thin, rotate around={-75.539104:(3.8882630,1.0027425)}] (3.8882630,1.0027425) ellipse (0.68803348 and 0.40271314);
\fill[fill=black!72.787786, rotate around={32.460896:(2.1552041,-3.3880934)}] (2.1552041,-3.3880934) ellipse (0.68803348 and 0.40271314);
\draw[thin, rotate around={32.460896:(2.1552041,-3.3880934)}] (2.1552041,-3.3880934) ellipse (0.68803348 and 0.40271314);
\fill[fill=black!72.787786, rotate around={-3.5391039:(0.24787457,4.0078224)}] (0.24787457,4.0078224) ellipse (0.68803348 and 0.40271314);
\draw[thin, rotate around={-3.5391039:(0.24787457,4.0078224)}] (0.24787457,4.0078224) ellipse (0.68803348 and 0.40271314);
\fill[fill=black!72.787786, rotate around={-39.539104:(-2.5562736,-3.0966994)}] (-2.5562736,-3.0966994) ellipse (0.68803348 and 0.40271314);
\draw[thin, rotate around={-39.539104:(-2.5562736,-3.0966994)}] (-2.5562736,-3.0966994) ellipse (0.68803348 and 0.40271314);
\fill[fill=black!72.787786, rotate around={68.460896:(-3.7350681,1.4742279)}] (-3.7350681,1.4742279) ellipse (0.68803348 and 0.40271314);
\draw[thin, rotate around={68.460896:(-3.7350681,1.4742279)}] (-3.7350681,1.4742279) ellipse (0.68803348 and 0.40271314);
\fill[fill=black!72.341577, rotate around={-36.680383:(2.4295300,3.2617946)}] (2.4295300,3.2617946) ellipse (0.68803348 and 0.39257246);
\draw[thin, rotate around={-36.680383:(2.4295300,3.2617946)}] (2.4295300,3.2617946) ellipse (0.68803348 and 0.39257246);
\fill[fill=black!72.341577, rotate around={-72.680383:(-3.8827658,-1.2108054)}] (-3.8827658,-1.2108054) ellipse (0.68803348 and 0.39257246);
\draw[thin, rotate around={-72.680383:(-3.8827658,-1.2108054)}] (-3.8827658,-1.2108054) ellipse (0.68803348 and 0.39257246);
\fill[fill=black!72.341577, rotate around={71.319617:(3.8529171,-1.3026704)}] (3.8529171,-1.3026704) ellipse (0.68803348 and 0.39257246);
\draw[thin, rotate around={71.319617:(3.8529171,-1.3026704)}] (3.8529171,-1.3026704) ellipse (0.68803348 and 0.39257246);
\fill[fill=black!72.341577, rotate around={-0.68038293:(-0.048296264,-4.0668892)}] (-0.048296264,-4.0668892) ellipse (0.68803348 and 0.39257246);
\draw[thin, rotate around={-0.68038293:(-0.048296264,-4.0668892)}] (-0.048296264,-4.0668892) ellipse (0.68803348 and 0.39257246);
\fill[fill=black!72.341577, rotate around={35.319617:(-2.3513850,3.3185703)}] (-2.3513850,3.3185703) ellipse (0.68803348 and 0.39257246);
\draw[thin, rotate around={35.319617:(-2.3513850,3.3185703)}] (-2.3513850,3.3185703) ellipse (0.68803348 and 0.39257246);
\fill[fill=black!69.299042, rotate around={-55.846592:(-3.6155138,-2.4528050)}] (-3.6155138,-2.4528050) ellipse (0.68803348 and 0.32398662);
\draw[thin, rotate around={-55.846592:(-3.6155138,-2.4528050)}] (-3.6155138,-2.4528050) ellipse (0.68803348 and 0.32398662);
\fill[fill=black!69.299042, rotate around={52.153408:(-3.4500114,2.6805995)}] (-3.4500114,2.6805995) ellipse (0.68803348 and 0.32398662);
\draw[thin, rotate around={52.153408:(-3.4500114,2.6805995)}] (-3.4500114,2.6805995) ellipse (0.68803348 and 0.32398662);
\fill[fill=black!69.299042, rotate around={88.153408:(4.3667347,-0.14078473)}] (4.3667347,-0.14078473) ellipse (0.68803348 and 0.32398662);
\draw[thin, rotate around={88.153408:(4.3667347,-0.14078473)}] (4.3667347,-0.14078473) ellipse (0.68803348 and 0.32398662);
\fill[fill=black!69.299042, rotate around={-19.846592:(1.4832895,4.1095067)}] (1.4832895,4.1095067) ellipse (0.68803348 and 0.32398662);
\draw[thin, rotate around={-19.846592:(1.4832895,4.1095067)}] (1.4832895,4.1095067) ellipse (0.68803348 and 0.32398662);
\fill[fill=black!69.299042, rotate around={16.153408:(1.2155010,-4.1965164)}] (1.2155010,-4.1965164) ellipse (0.68803348 and 0.32398662);
\draw[thin, rotate around={16.153408:(1.2155010,-4.1965164)}] (1.2155010,-4.1965164) ellipse (0.68803348 and 0.32398662);
\fill[fill=black!67.063600, rotate around={-49.398935:(3.4486087,2.9559263)}] (3.4486087,2.9559263) ellipse (0.68803348 and 0.27422804);
\draw[thin, rotate around={-49.398935:(3.4486087,2.9559263)}] (3.4486087,2.9559263) ellipse (0.68803348 and 0.27422804);
\fill[fill=black!67.063600, rotate around={-13.398935:(-1.0525331,-4.4184360)}] (-1.0525331,-4.4184360) ellipse (0.68803348 and 0.27422804);
\draw[thin, rotate around={-13.398935:(-1.0525331,-4.4184360)}] (-1.0525331,-4.4184360) ellipse (0.68803348 and 0.27422804);
\fill[fill=black!67.063600, rotate around={22.601065:(-1.7455743,4.1932532)}] (-1.7455743,4.1932532) ellipse (0.68803348 and 0.27422804);
\draw[thin, rotate around={22.601065:(-1.7455743,4.1932532)}] (-1.7455743,4.1932532) ellipse (0.68803348 and 0.27422804);
\fill[fill=black!67.063600, rotate around={-85.398935:(-4.5274329,-0.36435334)}] (-4.5274329,-0.36435334) ellipse (0.68803348 and 0.27422804);
\draw[thin, rotate around={-85.398935:(-4.5274329,-0.36435334)}] (-4.5274329,-0.36435334) ellipse (0.68803348 and 0.27422804);
\fill[fill=black!67.063600, rotate around={58.601065:(3.8769317,-2.3663903)}] (3.8769317,-2.3663903) ellipse (0.68803348 and 0.27422804);
\draw[thin, rotate around={58.601065:(3.8769317,-2.3663903)}] (3.8769317,-2.3663903) ellipse (0.68803348 and 0.27422804);
\fill[fill=black!65.324654, rotate around={-76.899450:(4.5311663,1.0544813)}] (4.5311663,1.0544813) ellipse (0.68803348 and 0.23590059);
\draw[thin, rotate around={-76.899450:(4.5311663,1.0544813)}] (4.5311663,1.0544813) ellipse (0.68803348 and 0.23590059);
\fill[fill=black!65.324654, rotate around={-4.8994498:(0.39733608,4.6352479)}] (0.39733608,4.6352479) ellipse (0.68803348 and 0.23590059);
\draw[thin, rotate around={-4.8994498:(0.39733608,4.6352479)}] (0.39733608,4.6352479) ellipse (0.68803348 and 0.23590059);
\fill[fill=black!65.324654, rotate around={-40.899450:(-3.0459820,-3.5164460)}] (-3.0459820,-3.5164460) ellipse (0.68803348 and 0.23590059);
\draw[thin, rotate around={-40.899450:(-3.0459820,-3.5164460)}] (-3.0459820,-3.5164460) ellipse (0.68803348 and 0.23590059);
\fill[fill=black!65.324654, rotate around={31.100550:(2.4030787,-3.9835426)}] (2.4030787,-3.9835426) ellipse (0.68803348 and 0.23590059);
\draw[thin, rotate around={31.100550:(2.4030787,-3.9835426)}] (2.4030787,-3.9835426) ellipse (0.68803348 and 0.23590059);
\fill[fill=black!65.324654, rotate around={67.100550:(-4.2855991,1.8102594)}] (-4.2855991,1.8102594) ellipse (0.68803348 and 0.23590059);
\draw[thin, rotate around={67.100550:(-4.2855991,1.8102594)}] (-4.2855991,1.8102594) ellipse (0.68803348 and 0.23590059);
\fill[fill=black!64.415526, rotate around={-69.390419:(-4.4011343,-1.6551177)}] (-4.4011343,-1.6551177) ellipse (0.68803348 and 0.21599723);
\draw[thin, rotate around={-69.390419:(-4.4011343,-1.6551177)}] (-4.4011343,-1.6551177) ellipse (0.68803348 and 0.21599723);
\fill[fill=black!64.415526, rotate around={38.609581:(-2.9341357,3.6742680)}] (-2.9341357,3.6742680) ellipse (0.68803348 and 0.21599723);
\draw[thin, rotate around={38.609581:(-2.9341357,3.6742680)}] (-2.9341357,3.6742680) ellipse (0.68803348 and 0.21599723);
\fill[fill=black!64.415526, rotate around={2.6095814:(0.21408513,-4.6971870)}] (0.21408513,-4.6971870) ellipse (0.68803348 and 0.21599723);
\draw[thin, rotate around={2.6095814:(0.21408513,-4.6971870)}] (0.21408513,-4.6971870) ellipse (0.68803348 and 0.21599723);
\fill[fill=black!64.415526, rotate around={74.609581:(4.5334462,-1.2479035)}] (4.5334462,-1.2479035) ellipse (0.68803348 and 0.21599723);
\draw[thin, rotate around={74.609581:(4.5334462,-1.2479035)}] (4.5334462,-1.2479035) ellipse (0.68803348 and 0.21599723);
\fill[fill=black!64.415526, rotate around={-33.390419:(2.5877387,3.9259402)}] (2.5877387,3.9259402) ellipse (0.68803348 and 0.21599723);
\draw[thin, rotate around={-33.390419:(2.5877387,3.9259402)}] (2.5877387,3.9259402) ellipse (0.68803348 and 0.21599723);
\fill[fill=black!61.671473, rotate around={81.966728:(-4.7753075,0.67395372)}] (-4.7753075,0.67395372) ellipse (0.68803348 and 0.15649243);
\draw[thin, rotate around={81.966728:(-4.7753075,0.67395372)}] (-4.7753075,0.67395372) ellipse (0.68803348 and 0.15649243);
\fill[fill=black!61.671473, rotate around={-62.033272:(4.2594450,2.2616153)}] (4.2594450,2.2616153) ellipse (0.68803348 and 0.15649243);
\draw[thin, rotate around={-62.033272:(4.2594450,2.2616153)}] (4.2594450,2.2616153) ellipse (0.68803348 and 0.15649243);
\fill[fill=black!61.671473, rotate around={-26.033272:(-2.1166192,-4.3333242)}] (-2.1166192,-4.3333242) ellipse (0.68803348 and 0.15649243);
\draw[thin, rotate around={-26.033272:(-2.1166192,-4.3333242)}] (-2.1166192,-4.3333242) ellipse (0.68803348 and 0.15649243);
\fill[fill=black!61.671473, rotate around={9.9667284:(-0.83468309,4.7498505)}] (-0.83468309,4.7498505) ellipse (0.68803348 and 0.15649243);
\draw[thin, rotate around={9.9667284:(-0.83468309,4.7498505)}] (-0.83468309,4.7498505) ellipse (0.68803348 and 0.15649243);
\fill[fill=black!61.671473, rotate around={45.966728:(3.4671649,-3.3520953)}] (3.4671649,-3.3520953) ellipse (0.68803348 and 0.15649243);
\draw[thin, rotate around={45.966728:(3.4671649,-3.3520953)}] (3.4671649,-3.3520953) ellipse (0.68803348 and 0.15649243);
\fill[fill=black!60.704613, rotate around={17.339291:(1.4469656,-4.6344756)}] (1.4469656,-4.6344756) ellipse (0.68803348 and 0.13573349);
\draw[thin, rotate around={17.339291:(1.4469656,-4.6344756)}] (1.4469656,-4.6344756) ellipse (0.68803348 and 0.13573349);
\fill[fill=black!60.704613, rotate around={89.339291:(4.8547852,-0.055985625)}] (4.8547852,-0.055985625) ellipse (0.68803348 and 0.13573349);
\draw[thin, rotate around={89.339291:(4.8547852,-0.055985625)}] (4.8547852,-0.055985625) ellipse (0.68803348 and 0.13573349);
\fill[fill=black!60.704613, rotate around={53.339291:(-3.8946962,2.8988645)}] (-3.8946962,2.8988645) ellipse (0.68803348 and 0.13573349);
\draw[thin, rotate around={53.339291:(-3.8946962,2.8988645)}] (-3.8946962,2.8988645) ellipse (0.68803348 and 0.13573349);
\fill[fill=black!60.704613, rotate around={-54.660709:(-3.9605113,-2.8082778)}] (-3.9605113,-2.8082778) ellipse (0.68803348 and 0.13573349);
\draw[thin, rotate around={-54.660709:(-3.9605113,-2.8082778)}] (-3.9605113,-2.8082778) ellipse (0.68803348 and 0.13573349);
\fill[fill=black!60.704613, rotate around={-18.660709:(1.5534566,4.5998746)}] (1.5534566,4.5998746) ellipse (0.68803348 and 0.13573349);
\draw[thin, rotate around={-18.660709:(1.5534566,4.5998746)}] (1.5534566,4.5998746) ellipse (0.68803348 and 0.13573349);
\fill[fill=black!58.748223, rotate around={-46.863436:(3.5799863,3.3543797)}] (3.5799863,3.3543797) ellipse (0.68803348 and 0.094066364);
\draw[thin, rotate around={-46.863436:(3.5799863,3.3543797)}] (3.5799863,3.3543797) ellipse (0.68803348 and 0.094066364);
\fill[fill=black!58.748223, rotate around={-10.863436:(-0.92461485,-4.8180133)}] (-0.92461485,-4.8180133) ellipse (0.68803348 and 0.094066364);
\draw[thin, rotate around={-10.863436:(-0.92461485,-4.8180133)}] (-0.92461485,-4.8180133) ellipse (0.68803348 and 0.094066364);
\fill[fill=black!58.748223, rotate around={25.136564:(-2.0839281,4.4413296)}] (-2.0839281,4.4413296) ellipse (0.68803348 and 0.094066364);
\draw[thin, rotate around={25.136564:(-2.0839281,4.4413296)}] (-2.0839281,4.4413296) ellipse (0.68803348 and 0.094066364);
\fill[fill=black!58.748223, rotate around={-82.863436:(-4.8679247,-0.60948702)}] (-4.8679247,-0.60948702) ellipse (0.68803348 and 0.094066364);
\draw[thin, rotate around={-82.863436:(-4.8679247,-0.60948702)}] (-4.8679247,-0.60948702) ellipse (0.68803348 and 0.094066364);
\fill[fill=black!58.748223, rotate around={61.136564:(4.2964813,-2.3682090)}] (4.2964813,-2.3682090) ellipse (0.68803348 and 0.094066364);
\draw[thin, rotate around={61.136564:(4.2964813,-2.3682090)}] (4.2964813,-2.3682090) ellipse (0.68803348 and 0.094066364);
\fill[fill=black!56.735421, rotate around={-75.320387:(4.7772447,1.2514700)}] (4.7772447,1.2514700) ellipse (0.68803348 and 0.051677984);
\draw[thin, rotate around={-75.320387:(4.7772447,1.2514700)}] (4.7772447,1.2514700) ellipse (0.68803348 and 0.051677984);
\fill[fill=black!56.735421, rotate around={-3.3203873:(0.28603110,4.9301552)}] (0.28603110,4.9301552) ellipse (0.68803348 and 0.051677984);
\draw[thin, rotate around={-3.3203873:(0.28603110,4.9301552)}] (0.28603110,4.9301552) ellipse (0.68803348 and 0.051677984);
\fill[fill=black!56.735421, rotate around={-39.320387:(-3.1292765,-3.8204545)}] (-3.1292765,-3.8204545) ellipse (0.68803348 and 0.051677984);
\draw[thin, rotate around={-39.320387:(-3.1292765,-3.8204545)}] (-3.1292765,-3.8204545) ellipse (0.68803348 and 0.051677984);
\fill[fill=black!56.735421, rotate around={68.679613:(-4.6004677,1.7955335)}] (-4.6004677,1.7955335) ellipse (0.68803348 and 0.051677984);
\draw[thin, rotate around={68.679613:(-4.6004677,1.7955335)}] (-4.6004677,1.7955335) ellipse (0.68803348 and 0.051677984);
\fill[fill=black!56.735421, rotate around={32.679613:(2.6664685,-4.1567042)}] (2.6664685,-4.1567042) ellipse (0.68803348 and 0.051677984);
\draw[thin, rotate around={32.679613:(2.6664685,-4.1567042)}] (2.6664685,-4.1567042) ellipse (0.68803348 and 0.051677984);
\fill[fill=black!55.723722, rotate around={-67.895863:(-4.5839098,-1.8617183)}] (-4.5839098,-1.8617183) ellipse (0.68803348 and 0.030559709);
\draw[thin, rotate around={-67.895863:(-4.5839098,-1.8617183)}] (-4.5839098,-1.8617183) ellipse (0.68803348 and 0.030559709);
\fill[fill=black!55.723722, rotate around={40.104137:(-3.1871054,3.7842547)}] (-3.1871054,3.7842547) ellipse (0.68803348 and 0.030559709);
\draw[thin, rotate around={40.104137:(-3.1871054,3.7842547)}] (-3.1871054,3.7842547) ellipse (0.68803348 and 0.030559709);
\fill[fill=black!55.723722, rotate around={4.1041368:(0.35409329,-4.9348599)}] (0.35409329,-4.9348599) ellipse (0.68803348 and 0.030559709);
\draw[thin, rotate around={4.1041368:(0.35409329,-4.9348599)}] (0.35409329,-4.9348599) ellipse (0.68803348 and 0.030559709);
\fill[fill=black!55.723722, rotate around={76.104137:(4.8027515,-1.1881928)}] (4.8027515,-1.1881928) ellipse (0.68803348 and 0.030559709);
\draw[thin, rotate around={76.104137:(4.8027515,-1.1881928)}] (4.8027515,-1.1881928) ellipse (0.68803348 and 0.030559709);
\fill[fill=black!55.723722, rotate around={-31.895863:(2.6141704,4.2005163)}] (2.6141704,4.2005163) ellipse (0.68803348 and 0.030559709);
\draw[thin, rotate around={-31.895863:(2.6141704,4.2005163)}] (2.6141704,4.2005163) ellipse (0.68803348 and 0.030559709);
\fill[fill=black!52.760309, rotate around={-60.024472:(4.2857578,2.4719434)}] (4.2857578,2.4719434) ellipse (0.68803348 and 0.030559709);
\draw[thin, rotate around={-60.024472:(4.2857578,2.4719434)}] (4.2857578,2.4719434) ellipse (0.68803348 and 0.030559709);
\fill[fill=black!52.760309, rotate around={83.975528:(-4.9202228,0.51926107)}] (-4.9202228,0.51926107) ellipse (0.68803348 and 0.030559709);
\draw[thin, rotate around={83.975528:(-4.9202228,0.51926107)}] (-4.9202228,0.51926107) ellipse (0.68803348 and 0.030559709);
\fill[fill=black!52.760309, rotate around={11.975528:(-1.0265858,4.8398704)}] (-1.0265858,4.8398704) ellipse (0.68803348 and 0.030559709);
\draw[thin, rotate around={11.975528:(-1.0265858,4.8398704)}] (-1.0265858,4.8398704) ellipse (0.68803348 and 0.030559709);
\fill[fill=black!52.760309, rotate around={-24.024472:(-2.0142791,-4.5189494)}] (-2.0142791,-4.5189494) ellipse (0.68803348 and 0.030559709);
\draw[thin, rotate around={-24.024472:(-2.0142791,-4.5189494)}] (-2.0142791,-4.5189494) ellipse (0.68803348 and 0.030559709);
\fill[fill=black!52.760309, rotate around={47.975528:(3.6753298,-3.3121254)}] (3.6753298,-3.3121254) ellipse (0.68803348 and 0.030559709);
\draw[thin, rotate around={47.975528:(3.6753298,-3.3121254)}] (3.6753298,-3.3121254) ellipse (0.68803348 and 0.030559709);
\fill[fill=black!51.723497, rotate around={19.400052:(1.6403639,-4.6580522)}] (1.6403639,-4.6580522) ellipse (0.68803348 and 0.051677984);
\draw[thin, rotate around={19.400052:(1.6403639,-4.6580522)}] (1.6403639,-4.6580522) ellipse (0.68803348 and 0.051677984);
\fill[fill=black!51.723497, rotate around={55.400052:(-4.0650166,2.8042617)}] (-4.0650166,2.8042617) ellipse (0.68803348 and 0.051677984);
\draw[thin, rotate around={55.400052:(-4.0650166,2.8042617)}] (-4.0650166,2.8042617) ellipse (0.68803348 and 0.051677984);
\fill[fill=black!51.723497, rotate around={-88.599948:(4.9369712,0.12066148)}] (4.9369712,0.12066148) ellipse (0.68803348 and 0.051677984);
\draw[thin, rotate around={-88.599948:(4.9369712,0.12066148)}] (4.9369712,0.12066148) ellipse (0.68803348 and 0.051677984);
\fill[fill=black!51.723497, rotate around={-52.599948:(-3.9231706,-2.9994961)}] (-3.9231706,-2.9994961) ellipse (0.68803348 and 0.051677984);
\draw[thin, rotate around={-52.599948:(-3.9231706,-2.9994961)}] (-3.9231706,-2.9994961) ellipse (0.68803348 and 0.051677984);
\fill[fill=black!51.723497, rotate around={-16.599948:(1.4108521,4.7326251)}] (1.4108521,4.7326251) ellipse (0.68803348 and 0.051677984);
\draw[thin, rotate around={-16.599948:(1.4108521,4.7326251)}] (1.4108521,4.7326251) ellipse (0.68803348 and 0.051677984);
\fill[fill=black!49.621124, rotate around={-9.0568989:(-0.77226838,-4.8447669)}] (-0.77226838,-4.8447669) ellipse (0.68803348 and 0.094066364);
\draw[thin, rotate around={-9.0568989:(-0.77226838,-4.8447669)}] (-0.77226838,-4.8447669) ellipse (0.68803348 and 0.094066364);
\fill[fill=black!49.621124, rotate around={-45.056899:(3.4724608,3.4655708)}] (3.4724608,3.4655708) ellipse (0.68803348 and 0.094066364);
\draw[thin, rotate around={-45.056899:(3.4724608,3.4655708)}] (3.4724608,3.4655708) ellipse (0.68803348 and 0.094066364);
\fill[fill=black!49.621124, rotate around={26.943101:(-2.2229043,4.3734267)}] (-2.2229043,4.3734267) ellipse (0.68803348 and 0.094066364);
\draw[thin, rotate around={26.943101:(-2.2229043,4.3734267)}] (-2.2229043,4.3734267) ellipse (0.68803348 and 0.094066364);
\fill[fill=black!49.621124, rotate around={-81.056899:(-4.8462912,-0.76264445)}] (-4.8462912,-0.76264445) ellipse (0.68803348 and 0.094066364);
\draw[thin, rotate around={-81.056899:(-4.8462912,-0.76264445)}] (-4.8462912,-0.76264445) ellipse (0.68803348 and 0.094066364);
\fill[fill=black!49.621124, rotate around={62.943101:(4.3690031,-2.2315862)}] (4.3690031,-2.2315862) ellipse (0.68803348 and 0.094066364);
\draw[thin, rotate around={62.943101:(4.3690031,-2.2315862)}] (4.3690031,-2.2315862) ellipse (0.68803348 and 0.094066364);
\fill[fill=black!47.525169, rotate around={-73.259627:(4.6493474,1.3984429)}] (4.6493474,1.3984429) ellipse (0.68803348 and 0.13573349);
\draw[thin, rotate around={-73.259627:(4.6493474,1.3984429)}] (4.6493474,1.3984429) ellipse (0.68803348 and 0.13573349);
\fill[fill=black!47.525169, rotate around={-1.2596266:(0.10672916,4.8539348)}] (0.10672916,4.8539348) ellipse (0.68803348 and 0.13573349);
\draw[thin, rotate around={-1.2596266:(0.10672916,4.8539348)}] (0.10672916,4.8539348) ellipse (0.68803348 and 0.13573349);
\fill[fill=black!47.525169, rotate around={-37.259627:(-2.9394170,-3.8641819)}] (-2.9394170,-3.8641819) ellipse (0.68803348 and 0.13573349);
\draw[thin, rotate around={-37.259627:(-2.9394170,-3.8641819)}] (-2.9394170,-3.8641819) ellipse (0.68803348 and 0.13573349);
\fill[fill=black!47.525169, rotate around={70.740373:(-4.5833851,1.6014538)}] (-4.5833851,1.6014538) ellipse (0.68803348 and 0.13573349);
\draw[thin, rotate around={70.740373:(-4.5833851,1.6014538)}] (-4.5833851,1.6014538) ellipse (0.68803348 and 0.13573349);
\fill[fill=black!47.525169, rotate around={34.740373:(2.7667256,-3.9896495)}] (2.7667256,-3.9896495) ellipse (0.68803348 and 0.13573349);
\draw[thin, rotate around={34.740373:(2.7667256,-3.9896495)}] (2.7667256,-3.9896495) ellipse (0.68803348 and 0.13573349);
\fill[fill=black!46.469415, rotate around={42.112936:(-3.2340284,3.5775460)}] (-3.2340284,3.5775460) ellipse (0.68803348 and 0.15649243);
\draw[thin, rotate around={42.112936:(-3.2340284,3.5775460)}] (-3.2340284,3.5775460) ellipse (0.68803348 and 0.15649243);
\fill[fill=black!46.469415, rotate around={-65.887064:(-4.4018182,-1.9702213)}] (-4.4018182,-1.9702213) ellipse (0.68803348 and 0.15649243);
\draw[thin, rotate around={-65.887064:(-4.4018182,-1.9702213)}] (-4.4018182,-1.9702213) ellipse (0.68803348 and 0.15649243);
\fill[fill=black!46.469415, rotate around={6.1129365:(0.51355516,-4.7952097)}] (0.51355516,-4.7952097) ellipse (0.68803348 and 0.15649243);
\draw[thin, rotate around={6.1129365:(0.51355516,-4.7952097)}] (0.51355516,-4.7952097) ellipse (0.68803348 and 0.15649243);
\fill[fill=black!46.469415, rotate around={78.112936:(4.7192127,-0.99338132)}] (4.7192127,-0.99338132) ellipse (0.68803348 and 0.15649243);
\draw[thin, rotate around={78.112936:(4.7192127,-0.99338132)}] (4.7192127,-0.99338132) ellipse (0.68803348 and 0.15649243);
\fill[fill=black!46.469415, rotate around={-29.887064:(2.4030787,4.1812663)}] (2.4030787,4.1812663) ellipse (0.68803348 and 0.15649243);
\draw[thin, rotate around={-29.887064:(2.4030787,4.1812663)}] (2.4030787,4.1812663) ellipse (0.68803348 and 0.15649243);
\fill[fill=black!43.397372, rotate around={-58.529916:(4.0104502,2.4547276)}] (4.0104502,2.4547276) ellipse (0.68803348 and 0.21599723);
\draw[thin, rotate around={-58.529916:(4.0104502,2.4547276)}] (4.0104502,2.4547276) ellipse (0.68803348 and 0.21599723);
\fill[fill=black!43.397372, rotate around={85.470084:(-4.6873750,0.37136715)}] (-4.6873750,0.37136715) ellipse (0.68803348 and 0.21599723);
\draw[thin, rotate around={85.470084:(-4.6873750,0.37136715)}] (-4.6873750,0.37136715) ellipse (0.68803348 and 0.21599723);
\fill[fill=black!43.397372, rotate around={13.470084:(-1.0952874,4.5727173)}] (-1.0952874,4.5727173) ellipse (0.68803348 and 0.21599723);
\draw[thin, rotate around={13.470084:(-1.0952874,4.5727173)}] (-1.0952874,4.5727173) ellipse (0.68803348 and 0.21599723);
\fill[fill=black!43.397372, rotate around={-22.529916:(-1.8016697,-4.3431998)}] (-1.8016697,-4.3431998) ellipse (0.68803348 and 0.21599723);
\draw[thin, rotate around={-22.529916:(-1.8016697,-4.3431998)}] (-1.8016697,-4.3431998) ellipse (0.68803348 and 0.21599723);
\fill[fill=black!43.397372, rotate around={49.470084:(3.5738819,-3.0556122)}] (3.5738819,-3.0556122) ellipse (0.68803348 and 0.21599723);
\draw[thin, rotate around={49.470084:(3.5738819,-3.0556122)}] (3.5738819,-3.0556122) ellipse (0.68803348 and 0.21599723);
\fill[fill=black!42.353681, rotate around={56.979115:(-3.9007785,2.5352172)}] (-3.9007785,2.5352172) ellipse (0.68803348 and 0.23590059);
\draw[thin, rotate around={56.979115:(-3.9007785,2.5352172)}] (-3.9007785,2.5352172) ellipse (0.68803348 and 0.23590059);
\fill[fill=black!42.353681, rotate around={-87.020885:(4.6459594,0.24178626)}] (4.6459594,0.24178626) ellipse (0.68803348 and 0.23590059);
\draw[thin, rotate around={-87.020885:(4.6459594,0.24178626)}] (4.6459594,0.24178626) ellipse (0.68803348 and 0.23590059);
\fill[fill=black!42.353681, rotate around={20.979115:(1.6656328,-4.3438539)}] (1.6656328,-4.3438539) ellipse (0.68803348 and 0.23590059);
\draw[thin, rotate around={20.979115:(1.6656328,-4.3438539)}] (1.6656328,-4.3438539) ellipse (0.68803348 and 0.23590059);
\fill[fill=black!42.353681, rotate around={-51.020885:(-3.6165417,-2.9264356)}] (-3.6165417,-2.9264356) ellipse (0.68803348 and 0.23590059);
\draw[thin, rotate around={-51.020885:(-3.6165417,-2.9264356)}] (-3.6165417,-2.9264356) ellipse (0.68803348 and 0.23590059);
\fill[fill=black!42.353681, rotate around={-15.020885:(1.2057280,4.4932860)}] (1.2057280,4.4932860) ellipse (0.68803348 and 0.23590059);
\draw[thin, rotate around={-15.020885:(1.2057280,4.4932860)}] (1.2057280,4.4932860) ellipse (0.68803348 and 0.23590059);
\fill[fill=black!40.319193, rotate around={-6.5214006:(-0.51586252,-4.5126808)}] (-0.51586252,-4.5126808) ellipse (0.68803348 and 0.27422804);
\draw[thin, rotate around={-6.5214006:(-0.51586252,-4.5126808)}] (-0.51586252,-4.5126808) ellipse (0.68803348 and 0.27422804);
\fill[fill=black!40.319193, rotate around={29.478599:(-2.2351457,3.9540518)}] (-2.2351457,3.9540518) ellipse (0.68803348 and 0.27422804);
\draw[thin, rotate around={29.478599:(-2.2351457,3.9540518)}] (-2.2351457,3.9540518) ellipse (0.68803348 and 0.27422804);
\fill[fill=black!40.319193, rotate around={-78.521401:(-4.4512248,-0.90388065)}] (-4.4512248,-0.90388065) ellipse (0.68803348 and 0.27422804);
\draw[thin, rotate around={-78.521401:(-4.4512248,-0.90388065)}] (-4.4512248,-0.90388065) ellipse (0.68803348 and 0.27422804);
\fill[fill=black!40.319193, rotate around={-42.521401:(3.0698288,3.3476191)}] (3.0698288,3.3476191) ellipse (0.68803348 and 0.27422804);
\draw[thin, rotate around={-42.521401:(3.0698288,3.3476191)}] (3.0698288,3.3476191) ellipse (0.68803348 and 0.27422804);
\fill[fill=black!40.319193, rotate around={65.478599:(4.1324042,-1.8851095)}] (4.1324042,-1.8851095) ellipse (0.68803348 and 0.27422804);
\draw[thin, rotate around={65.478599:(4.1324042,-1.8851095)}] (4.1324042,-1.8851095) ellipse (0.68803348 and 0.27422804);
\fill[fill=black!37.625501, rotate around={71.926257:(-4.1534283,1.3554432)}] (-4.1534283,1.3554432) ellipse (0.68803348 and 0.32398662);
\draw[thin, rotate around={71.926257:(-4.1534283,1.3554432)}] (-4.1534283,1.3554432) ellipse (0.68803348 and 0.32398662);
\fill[fill=black!37.625501, rotate around={-36.073743:(-2.5725830,-3.5312900)}] (-2.5725830,-3.5312900) ellipse (0.68803348 and 0.32398662);
\draw[thin, rotate around={-36.073743:(-2.5725830,-3.5312900)}] (-2.5725830,-3.5312900) ellipse (0.68803348 and 0.32398662);
\fill[fill=black!37.625501, rotate around={-0.073743004:(0.0056231605,4.3690000)}] (0.0056231605,4.3690000) ellipse (0.68803348 and 0.32398662);
\draw[thin, rotate around={-0.073743004:(0.0056231605,4.3690000)}] (0.0056231605,4.3690000) ellipse (0.68803348 and 0.32398662);
\fill[fill=black!37.625501, rotate around={35.926257:(2.5634845,-3.5379005)}] (2.5634845,-3.5379005) ellipse (0.68803348 and 0.32398662);
\draw[thin, rotate around={35.926257:(2.5634845,-3.5379005)}] (2.5634845,-3.5379005) ellipse (0.68803348 and 0.32398662);
\fill[fill=black!37.625501, rotate around={-72.073743:(4.1569036,1.3447473)}] (4.1569036,1.3447473) ellipse (0.68803348 and 0.32398662);
\draw[thin, rotate around={-72.073743:(4.1569036,1.3447473)}] (4.1569036,1.3447473) ellipse (0.68803348 and 0.32398662);
\fill[fill=black!33.802712, rotate around={88.760048:(-4.0662236,0.088011893)}] (-4.0662236,0.088011893) ellipse (0.68803348 and 0.39257246);
\draw[thin, rotate around={88.760048:(-4.0662236,0.088011893)}] (-4.0662236,0.088011893) ellipse (0.68803348 and 0.39257246);
\fill[fill=black!33.802712, rotate around={-19.239952:(-1.3402365,-3.8400113)}] (-1.3402365,-3.8400113) ellipse (0.68803348 and 0.39257246);
\draw[thin, rotate around={-19.239952:(-1.3402365,-3.8400113)}] (-1.3402365,-3.8400113) ellipse (0.68803348 and 0.39257246);
\fill[fill=black!33.802712, rotate around={16.760048:(-1.1728279,3.8944056)}] (-1.1728279,3.8944056) ellipse (0.68803348 and 0.39257246);
\draw[thin, rotate around={16.760048:(-1.1728279,3.8944056)}] (-1.1728279,3.8944056) ellipse (0.68803348 and 0.39257246);
\fill[fill=black!33.802712, rotate around={52.760048:(3.2379119,-2.4612694)}] (3.2379119,-2.4612694) ellipse (0.68803348 and 0.39257246);
\draw[thin, rotate around={52.760048:(3.2379119,-2.4612694)}] (3.2379119,-2.4612694) ellipse (0.68803348 and 0.39257246);
\fill[fill=black!33.802712, rotate around={-55.239952:(3.3413761,2.3188631)}] (3.3413761,2.3188631) ellipse (0.68803348 and 0.39257246);
\draw[thin, rotate around={-55.239952:(3.3413761,2.3188631)}] (3.3413761,2.3188631) ellipse (0.68803348 and 0.39257246);
\fill[fill=black!33.225452, rotate around={-88.381231:(4.0138778,0.11343365)}] (4.0138778,0.11343365) ellipse (0.68803348 and 0.40271314);
\draw[thin, rotate around={-88.381231:(4.0138778,0.11343365)}] (4.0138778,0.11343365) ellipse (0.68803348 and 0.40271314);
\fill[fill=black!33.225452, rotate around={55.618769:(-3.3139699,2.2675284)}] (-3.3139699,2.2675284) ellipse (0.68803348 and 0.40271314);
\draw[thin, rotate around={55.618769:(-3.3139699,2.2675284)}] (-3.3139699,2.2675284) ellipse (0.68803348 and 0.40271314);
\fill[fill=black!33.225452, rotate around={19.618769:(1.3482383,-3.7823717)}] (1.3482383,-3.7823717) ellipse (0.68803348 and 0.40271314);
\draw[thin, rotate around={19.618769:(1.3482383,-3.7823717)}] (1.3482383,-3.7823717) ellipse (0.68803348 and 0.40271314);
\fill[fill=black!33.225452, rotate around={-52.381231:(-3.1806207,-2.4510679)}] (-3.1806207,-2.4510679) ellipse (0.68803348 and 0.40271314);
\draw[thin, rotate around={-52.381231:(-3.1806207,-2.4510679)}] (-3.1806207,-2.4510679) ellipse (0.68803348 and 0.40271314);
\fill[fill=black!33.225452, rotate around={-16.381231:(1.1324746,3.8524775)}] (1.1324746,3.8524775) ellipse (0.68803348 and 0.40271314);
\draw[thin, rotate around={-16.381231:(1.1324746,3.8524775)}] (1.1324746,3.8524775) ellipse (0.68803348 and 0.40271314);
\fill[fill=black!31.863748, rotate around={36:(-2.2845523,3.1444164)}] (-2.2845523,3.1444164) ellipse (0.68803348 and 0.42640081);
\draw[thin, rotate around={36:(-2.2845523,3.1444164)}] (-2.2845523,3.1444164) ellipse (0.68803348 and 0.42640081);
\fill[fill=black!31.863748, rotate around={0:(0,-3.8867125)}] (0,-3.8867125) ellipse (0.68803348 and 0.42640081);
\draw[thin, rotate around={0:(0,-3.8867125)}] (0,-3.8867125) ellipse (0.68803348 and 0.42640081);
\fill[fill=black!31.863748, rotate around={-36:(2.2845523,3.1444164)}] (2.2845523,3.1444164) ellipse (0.68803348 and 0.42640081);
\draw[thin, rotate around={-36:(2.2845523,3.1444164)}] (2.2845523,3.1444164) ellipse (0.68803348 and 0.42640081);
\fill[fill=black!31.863748, rotate around={-72:(-3.6964832,-1.2010602)}] (-3.6964832,-1.2010602) ellipse (0.68803348 and 0.42640081);
\draw[thin, rotate around={-72:(-3.6964832,-1.2010602)}] (-3.6964832,-1.2010602) ellipse (0.68803348 and 0.42640081);
\fill[fill=black!31.863748, rotate around={72:(3.6964832,-1.2010602)}] (3.6964832,-1.2010602) ellipse (0.68803348 and 0.42640081);
\draw[thin, rotate around={72:(3.6964832,-1.2010602)}] (3.6964832,-1.2010602) ellipse (0.68803348 and 0.42640081);
\fill[fill=black!28.403363, rotate around={36.761428:(2.1020513,-2.8138173)}] (2.1020513,-2.8138173) ellipse (0.68803348 and 0.48506528);
\draw[thin, rotate around={36.761428:(2.1020513,-2.8138173)}] (2.1020513,-2.8138173) ellipse (0.68803348 and 0.48506528);
\fill[fill=black!28.403363, rotate around={0.76142780:(-0.046674943,3.5119808)}] (-0.046674943,3.5119808) ellipse (0.68803348 and 0.48506528);
\draw[thin, rotate around={0.76142780:(-0.046674943,3.5119808)}] (-0.046674943,3.5119808) ellipse (0.68803348 and 0.48506528);
\fill[fill=black!28.403363, rotate around={-35.238572:(-2.0265297,-2.8686870)}] (-2.0265297,-2.8686870) ellipse (0.68803348 and 0.48506528);
\draw[thin, rotate around={-35.238572:(-2.0265297,-2.8686870)}] (-2.0265297,-2.8686870) ellipse (0.68803348 and 0.48506528);
\fill[fill=black!28.403363, rotate around={72.761428:(-3.3545155,1.0408712)}] (-3.3545155,1.0408712) ellipse (0.68803348 and 0.48506528);
\draw[thin, rotate around={72.761428:(-3.3545155,1.0408712)}] (-3.3545155,1.0408712) ellipse (0.68803348 and 0.48506528);
\fill[fill=black!28.403363, rotate around={-71.238572:(3.3256688,1.1296523)}] (3.3256688,1.1296523) ellipse (0.68803348 and 0.48506528);
\draw[thin, rotate around={-71.238572:(3.3256688,1.1296523)}] (3.3256688,1.1296523) ellipse (0.68803348 and 0.48506528);
\fill[fill=black!25.363876, rotate around={22.618402:(-1.1987526,2.8772131)}] (-1.1987526,2.8772131) ellipse (0.68803348 and 0.53467100);
\draw[thin, rotate around={22.618402:(-1.1987526,2.8772131)}] (-1.1987526,2.8772131) ellipse (0.68803348 and 0.53467100);
\fill[fill=black!25.363876, rotate around={-13.381598:(-0.72137220,-3.0323234)}] (-0.72137220,-3.0323234) ellipse (0.68803348 and 0.53467100);
\draw[thin, rotate around={-13.381598:(-0.72137220,-3.0323234)}] (-0.72137220,-3.0323234) ellipse (0.68803348 and 0.53467100);
\fill[fill=black!25.363876, rotate around={49.381598:(-2.3659573,2.0291892)}] (-2.3659573,2.0291892) ellipse (0.68803348 and 0.53467100);
\draw[thin, rotate around={49.381598:(-2.3659573,2.0291892)}] (-2.3659573,2.0291892) ellipse (0.68803348 and 0.53467100);
\fill[fill=black!25.363876, rotate around={-49.381598:(2.3659573,2.0291892)}] (2.3659573,2.0291892) ellipse (0.68803348 and 0.53467100);
\draw[thin, rotate around={-49.381598:(2.3659573,2.0291892)}] (2.3659573,2.0291892) ellipse (0.68803348 and 0.53467100);
\fill[fill=black!25.363876, rotate around={13.381598:(0.72137220,-3.0323234)}] (0.72137220,-3.0323234) ellipse (0.68803348 and 0.53467100);
\draw[thin, rotate around={13.381598:(0.72137220,-3.0323234)}] (0.72137220,-3.0323234) ellipse (0.68803348 and 0.53467100);
\fill[fill=black!25.363876, rotate around={85.381598:(3.1068272,-0.25097372)}] (3.1068272,-0.25097372) ellipse (0.68803348 and 0.53467100);
\draw[thin, rotate around={85.381598:(3.1068272,-0.25097372)}] (3.1068272,-0.25097372) ellipse (0.68803348 and 0.53467100);
\fill[fill=black!25.363876, rotate around={-22.618402:(1.1987526,2.8772131)}] (1.1987526,2.8772131) ellipse (0.68803348 and 0.53467100);
\draw[thin, rotate around={-22.618402:(1.1987526,2.8772131)}] (1.1987526,2.8772131) ellipse (0.68803348 and 0.53467100);
\fill[fill=black!25.363876, rotate around={-58.618402:(-2.6609946,-1.6231052)}] (-2.6609946,-1.6231052) ellipse (0.68803348 and 0.53467100);
\draw[thin, rotate around={-58.618402:(-2.6609946,-1.6231052)}] (-2.6609946,-1.6231052) ellipse (0.68803348 and 0.53467100);
\fill[fill=black!25.363876, rotate around={58.618402:(2.6609946,-1.6231052)}] (2.6609946,-1.6231052) ellipse (0.68803348 and 0.53467100);
\draw[thin, rotate around={58.618402:(2.6609946,-1.6231052)}] (2.6609946,-1.6231052) ellipse (0.68803348 and 0.53467100);
\fill[fill=black!25.363876, rotate around={-85.381598:(-3.1068272,-0.25097372)}] (-3.1068272,-0.25097372) ellipse (0.68803348 and 0.53467100);
\draw[thin, rotate around={-85.381598:(-3.1068272,-0.25097372)}] (-3.1068272,-0.25097372) ellipse (0.68803348 and 0.53467100);
\fill[fill=black!21.184238, rotate around={36:(1.4271437,-1.9642948)}] (1.4271437,-1.9642948) ellipse (0.68803348 and 0.59967197);
\draw[thin, rotate around={36:(1.4271437,-1.9642948)}] (1.4271437,-1.9642948) ellipse (0.68803348 and 0.59967197);
\fill[fill=black!21.184238, rotate around={0:(0,2.4280020)}] (0,2.4280020) ellipse (0.68803348 and 0.59967197);
\draw[thin, rotate around={0:(0,2.4280020)}] (0,2.4280020) ellipse (0.68803348 and 0.59967197);
\fill[fill=black!21.184238, rotate around={-36:(-1.4271437,-1.9642948)}] (-1.4271437,-1.9642948) ellipse (0.68803348 and 0.59967197);
\draw[thin, rotate around={-36:(-1.4271437,-1.9642948)}] (-1.4271437,-1.9642948) ellipse (0.68803348 and 0.59967197);
\fill[fill=black!21.184238, rotate around={72:(-2.3091671,0.75029387)}] (-2.3091671,0.75029387) ellipse (0.68803348 and 0.59967197);
\draw[thin, rotate around={72:(-2.3091671,0.75029387)}] (-2.3091671,0.75029387) ellipse (0.68803348 and 0.59967197);
\fill[fill=black!21.184238, rotate around={-72:(2.3091671,0.75029387)}] (2.3091671,0.75029387) ellipse (0.68803348 and 0.59967197);
\draw[thin, rotate around={-72:(2.3091671,0.75029387)}] (2.3091671,0.75029387) ellipse (0.68803348 and 0.59967197);
\fill[fill=black!19.498314, rotate around={36:(-1.2195028,1.6785017)}] (-1.2195028,1.6785017) ellipse (0.68803348 and 0.62474596);
\draw[thin, rotate around={36:(-1.2195028,1.6785017)}] (-1.2195028,1.6785017) ellipse (0.68803348 and 0.62474596);
\fill[fill=black!19.498314, rotate around={0:(0,-2.0747422)}] (0,-2.0747422) ellipse (0.68803348 and 0.62474596);
\draw[thin, rotate around={0:(0,-2.0747422)}] (0,-2.0747422) ellipse (0.68803348 and 0.62474596);
\fill[fill=black!19.498314, rotate around={-36:(1.2195028,1.6785017)}] (1.2195028,1.6785017) ellipse (0.68803348 and 0.62474596);
\draw[thin, rotate around={-36:(1.2195028,1.6785017)}] (1.2195028,1.6785017) ellipse (0.68803348 and 0.62474596);
\fill[fill=black!19.498314, rotate around={72:(1.9731971,-0.64113059)}] (1.9731971,-0.64113059) ellipse (0.68803348 and 0.62474596);
\draw[thin, rotate around={72:(1.9731971,-0.64113059)}] (1.9731971,-0.64113059) ellipse (0.68803348 and 0.62474596);
\fill[fill=black!19.498314, rotate around={-72:(-1.9731971,-0.64113059)}] (-1.9731971,-0.64113059) ellipse (0.68803348 and 0.62474596);
\draw[thin, rotate around={-72:(-1.9731971,-0.64113059)}] (-1.9731971,-0.64113059) ellipse (0.68803348 and 0.62474596);
\fill[fill=black!16.401532, rotate around={36:(0.68148819,-0.93798802)}] (0.68148819,-0.93798802) ellipse (0.68803348 and 0.66891303);
\draw[thin, rotate around={36:(0.68148819,-0.93798802)}] (0.68148819,-0.93798802) ellipse (0.68803348 and 0.66891303);
\fill[fill=black!16.401532, rotate around={0:(0,1.1594170)}] (0,1.1594170) ellipse (0.68803348 and 0.66891303);
\draw[thin, rotate around={0:(0,1.1594170)}] (0,1.1594170) ellipse (0.68803348 and 0.66891303);
\fill[fill=black!16.401532, rotate around={72:(-1.1026710,0.35827954)}] (-1.1026710,0.35827954) ellipse (0.68803348 and 0.66891303);
\draw[thin, rotate around={72:(-1.1026710,0.35827954)}] (-1.1026710,0.35827954) ellipse (0.68803348 and 0.66891303);
\fill[fill=black!16.401532, rotate around={-36:(-0.68148819,-0.93798802)}] (-0.68148819,-0.93798802) ellipse (0.68803348 and 0.66891303);
\draw[thin, rotate around={-36:(-0.68148819,-0.93798802)}] (-0.68148819,-0.93798802) ellipse (0.68803348 and 0.66891303);
\fill[fill=black!16.401532, rotate around={-72:(1.1026710,0.35827954)}] (1.1026710,0.35827954) ellipse (0.68803348 and 0.66891303);
\draw[thin, rotate around={-72:(1.1026710,0.35827954)}] (1.1026710,0.35827954) ellipse (0.68803348 and 0.66891303);
\end{tikzpicture}
\caption{This packing of $180$ spherical caps on a transparent sphere corresponds to a conjecturally optimal spherical code in $\R^3$ with chiral icosahedral symmetry \cite{T83}. Its maximal inner product is the root $0.9621287940857901\dots$ of the polynomial $448505x^{18} + 1214430x^{17} + 1867769x^{16} + 1054048x^{15} - 1540076x^{14}
- 3472264x^{13} - 2273628x^{12} + 709568x^{11} + 2115198x^{10}
+ 712036x^9 - 369522x^8 - 221472x^7 + 20772x^6 + 3416x^5
- 6956x^4 + 256x^3 + 81x^2 - 18x + 1$.}
\label{figure:plot}
\end{figure}

Nevertheless, rigorous proofs of optimality are known in certain cases, which are listed in Table~\ref{table:knowncases}. When $n=3$, the cases with $N \le 14$ or $N=24$ can be resolved by direct arguments based on geometry and combinatorics,\footnote{For comparison, the optimal cosine for $15$ points on $S^2$ is conjectured to be $0.59260590\dots$, with minimal polynomial $13x^5 - x^4 + 6x^3 + 2x^2 - 3x - 1$. There appear to be exactly two optimal codes, up to isometry: the first code to be discovered \cite{SW51} and a modification that moves one point \cite{K91}.} and the same is true for $120$ points in $\R^4$. However, these direct techniques have not yet had any success in other cases.

\begin{table}
\caption{The previously known optimal spherical codes with $N>2n$ points in $\R^n$ and minimal angle $\theta$, for $n>2$.}
\label{table:knowncases}
\begin{tabular}{rrlll}
\toprule
$n$ & $N$ & $\cos \theta$ & References\\
\midrule
$3$ & $7$ & root of $3x^3 - 9x^2 - 3x + 1$ near $0.210$ & \cite{SW51}\\
$3$ & $8$ & $1/\big(\sqrt{8}+1\big)$ & \cite{SW51}\\
$3$ & $9$ & $1/3$ & \cite{SW51}\\
$3$ & $10$ & root of $7x^3 - 4x^2 - 2x + 1$ near $0.404$ & \cite{SW51,D86}\\
$3$ & $11$ & $1/\sqrt{5}$ & \cite{SW51,D86}\\
$3$ & $12$ & $1/\sqrt{5}$ & \cite{FT43}\\
$3$ & $13$ & root of $x^8 - 24x^7 - 12x^6 + 8x^5 + 38x^4$ & \cite{SW51,MT12}\\
 & & $\phantom{} + 24x^3 - 12x^2 - 8x + 1$ near $0.543$\\
$3$ & $14$ & root of $4x^4 - 2x^3 + 3x^2 - 1$ near $0.564$ & \cite{SW51,MT15}\\
$3$ & $24$ & root of $7x^3 + x^2 - 3x - 1$ near $0.723$ & \cite{R61}\\
$4$ & $10$ & $1/6$ & \cite{M80,BV09}\\
$4$ & $120$ & $1/\big(\sqrt{5}-1\big)$ & \cite{S01,B78,A99}\\
$5$ & $16$ & $1/5$ & \cite{G00,L79}\\
$6$ & $27$ & $1/4$ & \cite{G00,L79}\\
$7$ & $56$ & $1/3$ & \cite{G00,L79}\\
$8$ & $240$ & $1/2$ & \cite{KZ73,L79,OS79}\\
$21$ & $112$ & $1/9$ & \cite{DGS77,L79}\\
$21$ & $162$ & $1/7$ & \cite{DGS77,L79}\\
$22$ & $100$ & $1/11$ & \cite{DGS77,L79}\\
$22$ & $275$ & $1/6$ & \cite{DGS77,L79}\\
$22$ & $891$ & $1/4$ & \cite{DGS77,L79}\\
$23$ & $552$ & $1/5$ & \cite{DGS77,L79}\\
$23$ & $4600$ & $1/3$ & \cite{DGS77,L79}\\
$24$ & $196560$ & $1/2$ & \cite{L67,L79,OS79}\\
$q\frac{q^3+1}{q+1}$ & $(q+1)(q^3+1)$ & $1/q^2$ where $q$ is a prime power & \cite{CGS78,L87,L92}\\
\bottomrule
\end{tabular}
\end{table}

All the remaining cases are proved using the theory of linear or semidefinite programming bounds, developed in \cite{D72, DGS77, KL78, L92, S05, BV08, M07, LV15} and many other papers. Specifically, \emph{$k$-point bounds} are based on certain constraints on the $k$-point distribution of a code $\mathcal{C}$, which counts $k$-tuples of points according to their relative positions. For $k=2$ there are linear constraints on this distribution, and for $k \ge 3$ there are semidefinite constraints. The $k$-point bound maximizes the number of points for a given minimal angle subject only to these constraints.\footnote{That problem is different from maximizing the minimal angle given the number of points, but the conditions needed for equality turn out to imply optimality as a spherical code in each of these sharp cases.} There is no reason to expect these specific constraints to be comprehensive, and indeed the $k$-point bound usually seems to be worse than the exact optimum. For example, for a spherical code with $24$ points in $\R^4$, the two-point bound implies only that the optimal cosine can be no smaller than $0.49251502\dots$ (as shown in \cite{AB00}), which does not match the conjectured value of $1/2$. However, it always provides a bound on the optimum, and in certain remarkable cases the bound is unexpectedly sharp (namely, the remaining cases in Table~\ref{table:knowncases}). This phenomenon is a compact analogue of the solution of the sphere packing problem in $8$ and $24$ dimensions \cite{V17,CKMRV17}.

Among the previously known cases, only one required three-point bounds\footnote{Three-point bounds also prove optimality for $8$ points in $\R^3$, as shown in \cite{DLM21}, but that case can be proved directly.} (the proof by Bachoc and Vallentin \cite{BV09} for $10$ points in $\R^4$), while all the others used only two-point bounds. Cohn and Woo \cite[Section~5]{CW12} raised the question of why three-point bounds were not known to be sharp more often: it seemed strange that two-point bounds could prove optimality for many spherical codes, while three-point bounds were known to be sharp in only a few additional cases. In the present paper, we show that three-point bounds are sharp in substantially more cases than was previously believed. Specifically, we use three-point bounds to prove the following theorem:

\begin{table}
\caption{The spherical codes we prove are optimal. Each consists of $N$ points in $\R^n$, with minimal angle $\theta$.}
\label{table:newcases}
\begin{tabular}{rrlll}
\toprule
$n$ & $N$ & $\cos \theta$ & Other cosines & Name and references\\
\midrule
$16$ & $288$ & $1/4$ & $-1$, $-1/4$, $0$ & \makecell[l]{Nordstrom-Robinson spherical code\\ \cite{NR68, CS73, L82, K95}}\\
$20$ & $56$ & $1/15$ & $-2/5$ & Gewirtz graph \cite{G69a,EZ01}\\
$21$ & $50$ & $1/21$ & $-3/7$ & Hoffman-Singleton graph \cite{HS60,EZ01}\\
$21$ & $77$ & $1/12$ & $-3/8$ & $M_{22}$ graph \cite[p.~75]{M64}, \cite{EZ01}\\
$64$ & $4224$ & $1/8$ & $-1$, $-1/8$, $0$ & Kerdock spherical code \cite{K72, CS73, L82, K95}\\
$256$ & $66048$ & $1/16$ & $-1$, $-1/16$, $0$ & Kerdock spherical code \cite{K72, CS73, L82, K95}\\
$1024$ & $1050624$ & $1/32$ & $-1$, $-1/32$, $0$ & Kerdock spherical code \cite{K72, CS73, L82, K95}\\
\bottomrule
\end{tabular}
\end{table}

\begin{theorem} \label{theorem:main}
Each of the spherical codes listed in Table~\ref{table:newcases} is optimal, and those in at most $64$ dimensions are unique up to isometry. Furthermore, the optimal $288$-point spherical code in $\R^{16}$ is universally optimal.
\end{theorem}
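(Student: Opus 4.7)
The plan is to apply the three-point semidefinite programming bound of Bachoc--Vallentin, which gives an upper bound on $|\mathcal{C}|$ for spherical codes $\mathcal{C} \subseteq S^{n-1}$ with pairwise inner products in $[-1,t]$, expressed as the optimum of a dual SDP with scalar, polynomial, and matrix-valued variables. To prove optimality of an $N$-point code with $\cos\theta = t$ it suffices to exhibit a dual feasible solution of value exactly $N$; complementary slackness then pins down the allowed inner products and distance distribution in any extremal code, so that uniqueness follows once the combinatorial structure is rigid enough.

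\textbf{Exact rounding (the main obstacle).} I would first solve the SDP numerically at high precision for each target parameter triple, confirm that the numerical optimum equals $N$, and read off the sharp inner products (which should match the ones in Table~\ref{table:newcases}). The technical core, and the central difficulty, is converting the numerical dual into a provably feasible \emph{exact} solution: positive-semidefiniteness of polynomial matrices is extremely brittle, so rounding coordinates independently destroys feasibility. My approach is to parametrise an affine subspace of dual solutions on which the constraint ``objective equals $N$'' and the vanishing conditions at the sharp inner products hold by construction, typically by factoring out the known root structure; project the numerical solution orthogonally onto a rational (or algebraic) point of this subspace; and then certify positive-semidefiniteness by producing an explicit sum-of-squares or Cholesky decomposition in exact arithmetic. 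For the Kerdock cases in $256$ and $1024$ dimensions this will strain the limits of exact linear algebra and will require careful exploitation of the problem's group symmetry to block-diagonalise the matrices before rounding.

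\textbf{Uniqueness and universal optimality.} For uniqueness in dimensions $n \le 64$, complementary slackness forces any extremal code to realise precisely the sharp inner products from Table~\ref{table:newcases} with the right multiplicities, which determines its $2$-point distribution; uniqueness then reduces to the classical characterisation of the associated combinatorial object (the Gewirtz, Hoffman--Singleton, and $M_{22}$ graphs, or the Nordstrom--Robinson and length-$64$ Kerdock codes). For universal optimality of the $288$-point code in $\R^{16}$, I would upgrade the three-point bound in the spirit of Cohn--Kumar: for every absolutely monotonic potential on $[-1,1]$, construct a Hermite-type auxiliary function, interpolating the potential to sufficient order at the sharp inner products $-1$, $-1/4$, $0$, $1/4$, whose Gegenbauer expansion and associated semidefinite certificates are all non-negative. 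Non-negativity of the infinitely many Gegenbauer coefficients will be reduced to a finite computation plus an asymptotic tail estimate via standard Jacobi polynomial bounds, the finite part again being discharged by the same exact-rounding technology developed for the optimality proofs.
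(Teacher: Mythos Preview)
Your overall strategy---three-point SDP for optimality, complementary slackness plus known combinatorial classifications for uniqueness, a three-point energy bound for universal optimality---is the paper's strategy. Two places need correction.

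For uniqueness in the Kerdock cases, complementary slackness only tells you the inner products lie in $\{\pm 1,\pm 1/2^{k},0\}$; it does not hand you multiplicities or a binary code. The paper closes this gap in two steps you omit: first, any optimal code must be antipodal (if $-x\notin\mathcal{C}$ then $\mathcal{C}\cup\{-x\}$ would beat the three-point bound), and second, Proposition~3.12 of Calderbank--Cameron--Kantor--Seidel partitions the resulting lines into mutually unbiased cross polytopes, which is what lets you choose coordinates and read off a binary code to which the Snover/Phelps uniqueness theorems apply. Similarly, for the strongly-regular-graph codes, complementary slackness gives only the two admissible inner products; an extra argument via the degree-$1$ and degree-$2$ Gegenbauer-sum inequalities is needed to force the code to be a spherical $2$-design, which is what pins down the strongly regular parameters.

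Your universal optimality plan would fail as written. The phrases ``infinitely many Gegenbauer coefficients'' and ``asymptotic tail estimate via Jacobi polynomial bounds'' describe the machinery of the two-point linear programming bound, and Appendix~\ref{app:two-point} shows that two-point bounds are \emph{not} sharp for the $288$-point code, so no Cohn--Kumar-style Hermite certificate with nonnegative Gegenbauer expansion can exist. The paper instead follows Cohn--Woo: Hermite interpolation at $-1,-1/4,0,1/4$ is used only to reduce the cone of absolutely monotonic potentials to a finite explicit basis of seventeen polynomials (six of which are handled for free by the $5$-design property), and then for each of the remaining eleven basis polynomials one solves and exactly rounds a separate three-point \emph{energy} SDP, which involves finitely many PSD blocks $F_0,\dots,F_d$ rather than an infinite Gegenbauer tail. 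The Hermite interpolant is the reduction to finitely many problems, not the certificate itself.
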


These results are proved using rigorous computer calculations, for which data files and code are available \cite{CdLL24}. In each case, two-point bounds are not sharp, as shown in Appendix~\ref{app:two-point}. The optimal spherical codes fall into two families, which are derived from triangle-free strongly regular graphs and Kerdock codes.

\subsection{Triangle-free strongly regular graphs}

By a graph, we mean an undirected, simple graph (no multiple edges or self-loops). Recall that a \emph{strongly regular graph with parameters $(n,k,\lambda,\mu)$} is an $n$-vertex graph, not a complete graph or its complement, such that every vertex has degree $k$, every pair of adjacent vertices has $\lambda$ common neighbors, and every pair of distinct, non-adjacent vertices has $\mu$ common neighbors. One can check that such a graph is connected if and only if $\mu > 0$, and we will always assume this is the case.

A graph is \emph{triangle-free} if there do not exist three mutually adjacent vertices; for a strongly regular graph, this condition amounts to $\lambda=0$. The only known connected, triangle-free strongly regular graphs are one infinite family and seven exceptional cases, namely
\begin{enumerate}
\item the complete bipartite graph $K_{n,n}$, which has parameters $(2n,n,0,n)$,

\item the $5$-cycle, which has parameters $(5,2,0,1)$,

\item the Petersen graph, which has parameters $(10,3,0,1)$,

\item the Clebsch graph, which has parameters $(16,5,0,2)$,

\item the Hoffman-Singleton graph, which has parameters $(50,7,0,1)$,

\item the Gewirtz graph, which has parameters $(56,10,0,2)$,

\item the $M_{22}$ graph, which has parameters $(77,16,0,4)$, and

\item the Higman-Sims\footnote{Before it was rediscovered by Higman and Sims \cite{HS68} in the process of constructing the Higman-Sims group, this graph was discovered by Mesner \cite{M56}, as pointed out by Klin and Woldar \cite{KW17}.} graph, which has parameters $(100,22,0,6)$.
\end{enumerate}
See \cite{BvM22} for constructions of these graphs, as well as more information and references.

Each of these graphs corresponds to a spherical code via \emph{spectral embedding}. Specifically, let $A$ be the graph's adjacency matrix, which is indexed by vertices. Orthogonally projecting the basis vectors indexed by these vertices into an eigenspace of $A$ yields points on a sphere, which we can rescale to be the unit sphere.

The complete bipartite graph $K_{n,n}$ is a special case, because it is bipartite. The eigenvalues of its adjacency matrix are $0$ (with multiplicity $2n-2$) and $\pm n$, and projecting into the eigenspace with eigenvalue~$0$ gives two orthogonal $(n-1)$-dimensional regular simplices in $\R^{2n-2}$, which is an optimal spherical code, but not unique for $n>2$.

For each of the remaining cases, we orthogonally project into the eigenspace with the smallest eigenvalue. The resulting spherical code is a two-distance set (i.e., there are only two distances between distinct points), with the larger distance corresponding to adjacency in the graph. Each of these codes is either already known to be optimal (Table~\ref{table:knowncases}\footnote{The $5$-cycle corresponds to a regular pentagon in the unit circle, which is too small to be listed in Table~\ref{table:knowncases}.}) or proved to be optimal in this paper (Table~\ref{table:newcases}). The new cases are the Hoffman-Singleton graph, the Gewirtz graph, and the $M_{22}$ graph.

Each of these three cases is closely related to the previously known optimal spherical codes. The $M_{22}$ spherical code is the kissing configuration of the Higman-Sims spherical code; in other words, it can be obtained as the nearest neighbors of any point in the Higman-Sims code. The Hoffman-Singleton and Gewirtz codes do not arise in this way; instead they occur as facets of other codes. Specifically, the optimal code with $112$ points in $\R^{21}$ is an antiprism consisting of two copies of the Gewirtz code on parallel planes, and the Higman-Sims code is an antiprism consisting of two copies of the Hoffman-Singleton code on parallel planes. For the combinatorics of this construction, see \cite[Section~5(d)]{BH93} for the Gewirtz graph and \cite[\S10.31]{BvM22} for the Hoffman-Singleton graph; for the geometric interpretation as an antiprism, see \cite[Section~3.7]{BBCGKS09}.

\begin{conjecture} \label{conj:stronglyregular}
Let $G$ be a connected, triangle-free strongly regular graph other than a complete bipartite graph, and let $\mathcal{C}$ be the spectral embedding of $G$ into its eigenspace with the smallest eigenvalue. Then three-point bounds prove that $\mathcal{C}$ is an optimal spherical code.
\end{conjecture}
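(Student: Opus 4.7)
The plan is to dispose of the seven known non-bipartite connected triangle-free strongly regular graphs one by one, combining results in this paper with results in the literature. Three cases---the Hoffman-Singleton, Gewirtz, and $M_{22}$ graphs---are exactly the new entries of Table~\ref{table:newcases} and are established by Theorem~\ref{theorem:main}. The Petersen graph ($10$ points in $\R^4$) is covered by the three-point proof of Bachoc and Vallentin~\cite{BV09}. The $5$-cycle gives a regular pentagon and is trivial in $\R^2$. That leaves the Clebsch graph ($16$ points in $\R^5$) and the Higman-Sims graph ($100$ points in $\R^{22}$) as the cases that the conjecture addresses but is not already settled explicitly.

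For Clebsch and Higman-Sims, the two-point linear programming bound is already known to be sharp (see the references in Table~\ref{table:knowncases}). A short sandwich argument then suffices: in the Bachoc-Vallentin formulation, the three-point semidefinite programming bound is a tightening of the two-point bound, since the pair distribution is a marginal of the triple distribution and hence every three-point feasible solution induces a two-point feasible one. Thus the three-point optimum is at most $|V(G)|$ (from the sharp two-point bound), while the code itself is feasible and forces the three-point optimum to be at least $|V(G)|$. Equality holds, which proves the conjecture in these cases.

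If instead a uniform direct three-point proof is desired, without invoking the known sharpness of the two-point bound for Clebsch and Higman-Sims, I would imitate the framework used for Theorem~\ref{theorem:main}: for each graph, set up the three-point SDP with the symmetry reduction afforded by the graph's automorphism group acting on the inner-product configuration, solve it numerically, and apply the paper's improved rounding procedure to extract an exact dual certificate of sharpness. The main obstacle is computational scaling for the Higman-Sims case in dimension $22$ with $100$ points, but this is substantially smaller than the Kerdock instance in $\R^{1024}$ with over $10^6$ points already handled by the same machinery, so it should be well within reach. A more conceptual obstacle is that the conjecture quantifies over all triangle-free strongly regular graphs rather than just the seven currently known: any hypothetical new example would require its own SDP setup, rounding, and verification, and there is no a priori reason the proof template would continue to succeed uniformly across parameter families.
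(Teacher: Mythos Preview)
The statement is a \emph{conjecture}, and the paper does not prove it: immediately after stating it, the authors write ``While we do not know how to prove Conjecture~\ref{conj:stronglyregular}, we prove optimality for several hypothetical codes\ldots''  There is therefore no paper proof to compare against.

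Your proposal does not prove the conjecture either, and you are aware of this: the conjecture quantifies over \emph{all} connected triangle-free strongly regular graphs, not merely the seven currently known.  Your plan verifies the conclusion for those seven (the three new cases via Theorem~\ref{theorem:main}, Petersen via \cite{BV09}, the pentagon trivially, and Clebsch and Higman--Sims by the sandwich argument that a sharp two-point bound forces a sharp three-point bound).  That case analysis is correct as far as it goes, and the sandwich argument is the standard one: any two-point auxiliary function embeds as a three-point auxiliary function in the Bachoc--Vallentin hierarchy, so the three-point bound is at most the two-point bound, hence sharp whenever the latter is.  (Your phrasing in terms of marginals of triple distributions is the primal version of the same fact.)  But none of this touches a hypothetical eighth graph, and as the paper itself notes, even a unified proof for the known cases---let alone a conceptual argument covering unknown ones---is not available.

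In short: there is no gap in your reasoning for the known cases, but what you have written is a verification of the conjecture's instances rather than a proof of the conjecture, and that distinction is exactly why the paper states it as open.
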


Conjecture~\ref{conj:stronglyregular} may not have further implications, since it is plausible that no more triangle-free strongly regular graphs remain to be discovered. However, our proofs work by examining each case separately. Even if no additional cases are found, it would be preferable to have a unified, more conceptual proof of optimality.

While we do not know how to prove Conjecture~\ref{conj:stronglyregular}, we prove optimality for several hypothetical codes based on possible parameters for triangle-free strongly regular graphs. These parameters are shown in Table~\ref{table:hypotheticalcases} and were taken from the list of possible strongly regular graph parameters in Chapter~12 of \cite{BvM22} (together with the Moore graph of degree~$57$, which has parameters $(3250,57,0,1)$). We imagine that few, if any, of the codes listed in this table actually exist, but if they do, then they are optimal.

\begin{table}
\caption{Hypothetical spherical codes that are optimal if they exist. Each consists of $N$ points in $\R^n$, with angles $\theta$ and $\psi$ between distinct points.}
\label{table:hypotheticalcases}
\begin{tabular}{rrlll}
\toprule
$n$ & $N$ & $\cos \theta$ & $\cos \psi$ & Parameters\\
\midrule
$55$ & $176$ & $1/25$  & $-7/25$ & $(176,25,0,4)$\\
$55$ & $210$ & $1/22$ & $-3/11$ & $(210,33,0,6)$\\
$56$ & $162$ & $1/28$ & $-2/7$ & $(162,21,0,3)$\\
$56$ & $266$ & $1/20$ & $-4/15$ & $(266,45,0,9)$\\
$115$ & $392$ & $3/115$ & $-5/23$ & $(392,46,0,6)$\\
$120$ & $352$ & $1/45$ & $-2/9$ & $(352,36,0,4)$\\
$143$ & $352$ & $1/65$ & $-3/13$ & $(352,26,0,2)$\\
$1520$ & $3250$ & $1/456$ & $-8/57$ & $(3250,57,0,1)$\\
\bottomrule
\end{tabular}
\end{table}

\subsection{Kerdock codes}
\label{subsec:kerdock}

Kerdock codes \cite{K72} are a family of binary error-correcting codes in $\{0,1\}^{2^{2k}}$ with $2^{4k}$ codewords and minimal Hamming distance $2^{2k-1}-2^{k-1}$, which can be constructed using $\Z/4\Z$-linear codes \cite{HKCSS95}. The case $k=1$ is trivial, while $k=2$ is the Nordstrom-Robinson code \cite{NR68}, which is known to be not only optimal \cite{NR68} but also unique \cite{S73}. For $k > 2$ it has not been known whether the Kerdock codes are optimal.

For each $k$, one can use the Kerdock code to construct a spherical code with $2^{4k}+2^{2k+1}$ points in $2^{2k}$ dimensions and maximal inner product $1/2^k$ (see \cite{CS73, L82, K95}). Specifically, this spherical code contains the standard orthonormal basis and its negatives as well as the $2^{4k}$ points  $((-1)^{c_1},\dots,(-1)^{c_{2^{2k}}})/2^k$ with $(c_1,\dots,c_{2^{2k}})$ in the Kerdock code. We call these configurations the \emph{Kerdock spherical codes}. They are known to be optimal among antipodal spherical codes \cite{L82}, or equivalently as point configurations in real projective space, but have not previously been known to be optimal among all spherical codes.

The points in Kerdock spherical codes can be grouped to form $2^{2k-1}+1$ cross polytopes (recall that a regular \emph{cross polytope} in $\R^n$ consists of $n$ orthogonal pairs of antipodal unit vectors), with inner products $\pm 1/2^k$ between points in different cross polytopes. In other words, they come from $2^{2k-1}+1$ real mutually unbiased bases, which is the maximal possible number in $2^{2k}$ dimensions (see Table~I in \cite{DGS75} or equation~(3.9) in \cite{CCKS97}). Mutually unbiased bases play a fundamental role in quantum information theory \cite{WF89}.

As stated in Theorem~\ref{theorem:main} and Table~\ref{table:newcases}, we prove that Kerdock spherical codes are optimal for $2 \le k \le 5$. Furthermore, we prove that they are unique for $k=2$ and $k=3$. As a corollary of our theorem, we obtain optimality of the Kerdock binary codes for $3 \le k \le 5$ and uniqueness for $k=3$, which were open problems (the Nordstrom-Robinson case $k=2$ was previously known). Furthermore, restricting from the sphere to the vertices of an inscribed hypercube shows that Schrijver's three-point bound for binary codes \cite{S05} is sharp in these cases.

We suspect that three-point bounds are sharp more generally:

\begin{conjecture} \label{conj:kerdock}
Three-point bounds prove optimality for Kerdock spherical codes in each dimension $2^{2k}$ with $k \ge 2$, and hence also for the corresponding Kerdock binary codes.
\end{conjecture}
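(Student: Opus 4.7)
My plan would be to extend the numerical strategy of the present paper to a construction uniform in $k$. The three-point SDP bound is proved by exhibiting a feasible dual certificate: a collection of positive semidefinite matrices whose associated polynomial slack function, after subtracting the linear objective, is nonnegative on the semialgebraic region cut out by $-1 \le u,v,w \le t$ with $t = 1/2^k$ together with positive semidefiniteness of the Gram matrix of the triple, and vanishes on the inner-product triples realized by the Kerdock spherical code. The task is to write such a certificate as a closed-form function of $k$ rather than as a floating-point table per $k$, and then to invoke the rounding machinery of the paper only as a template for how the certificate must look.

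First I would exploit the rich symmetry. The Clifford group acts transitively on ordered pairs of the mutually unbiased bases comprising the code and, extended by coordinate sign changes within each basis, gives a symmetry group whose invariants collapse the three-point SDP to a problem whose size is essentially polynomial in $k$. Expanding in the Gegenbauer polynomials on $S^{n-1}$ with $n = 2^{2k}$ and using the Bachoc--Vallentin matrices for the three-point components, the dual variables become polynomials in $u,v,w$ whose coefficients depend on $k$ only through the dimension, the multiplicities, and Jacobi normalizations. From the explicit solutions at $k=2,3,4,5$ found in the paper one would try to interpolate these coefficients as rational functions of $2^k$.

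Next, complementary slackness can be imposed on the finite set of inner-product triples actually realized in the code; this set has a transparent description from the mutually-unbiased-bases structure, since within one basis only $0, \pm 1$ appears, while between two distinct bases only $\pm 1/2^k$ appears. Complementary slackness therefore becomes a finite, combinatorially structured linear system in the unknown Gegenbauer coefficients of the certificate, and I expect it to be solvable in closed form once the ansatz is chosen to be invariant under the Clifford symmetries. The associated primal optimal measure is already known and supported on these same triples, so primal-dual feasibility is the only remaining obligation.

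The hard part will be verifying nonnegativity of the slack polynomial uniformly in $k$ on the full three-point region. The natural tools are Positivstellensatz multipliers built from the constraints $(t-u)(1+u)$, $(t-v)(1+v)$, $(t-w)(1+w)$, and the Gram determinant; but a single bounded-degree sum-of-squares decomposition is unlikely to suffice, because the numerical rank of the dual matrices in the paper's computations grows with $k$. One promising route is to separate a $k$-independent bulk certificate, reflecting the behavior as $n \to \infty$ of the related linear programming bound for antipodal codes, from a finite-rank correction tailored to the specific inner product $1/2^k$. At best this produces an asymptotic statement valid for all sufficiently large $k$, leaving only a finite range of intermediate $k$ to be covered by direct rigorous computation in the style of the present paper. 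Obtaining uniqueness for general $k$ would require the additional input of strict positivity of the slack off the realized inner-product set, together with a rigidity argument that traces the combinatorial Kerdock construction back from the Gram data.
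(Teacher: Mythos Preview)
The statement you are attempting to prove is labeled a \emph{conjecture} in the paper, and the paper contains no proof of it. The authors establish the cases $k=2,3,4,5$ by separate rigorous computations (Table~\ref{table:computations}) and explicitly say they have not succeeded in identifying a pattern that would settle all $k$ systematically; they even entertain the possibility that the conjecture is false for large $k$ and that four-point bounds may eventually be required. So there is nothing to compare your proposal against: what you have written is a research strategy for an open problem, not a proof, and indeed you acknowledge as much when you write ``at best this produces an asymptotic statement valid for all sufficiently large $k$.''

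Beyond this, your central technical idea --- interpolating the dual certificate coefficients from the known cases as rational functions of $2^k$ --- runs directly into an obstacle the paper flags. The required SDP degree is $d=8,9,10$ for $k=2,3,4$ but jumps to $d=18$ for $k=5$. A certificate whose structure varied smoothly with $k$ would not exhibit such a jump; the authors cite this as evidence that ``finding a systematic solution may be difficult.'' Your proposal to separate a $k$-independent bulk from a finite-rank correction is reasonable as a heuristic, but the growth of $d$ suggests the correction is not finite-rank uniformly in $k$, and you have no mechanism for controlling it. The nonnegativity verification you identify as ``the hard part'' is genuinely the crux, and nothing in the proposal indicates how to overcome it; invoking a Positivstellensatz with bounded-degree multipliers is precisely what fails if $d$ grows without bound.
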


Strangely, three-point bounds do not seem to be sharp when $k=1$. In that case, the spherical code consists of the vertices of a regular $24$-cell, or equivalently the $D_4$ root system. It solves the kissing problem in four dimensions \cite{M08}, but it is not known to be an optimal spherical code. In many ways $k=1$ is the most appealing case, and we have no conceptual explanation for why it seems more difficult to resolve than the others.

Our techniques are likely capable of resolving Conjecture~\ref{conj:kerdock} for any specific choice of $k \ge 2$ for which it holds, but the semidefinite programs that must be solved seem to grow as $k$ grows. Perhaps one can identify a pattern and settle all these cases systematically, but we have not succeeded in doing so. In the notation of Section~\ref{sec:computation}, the required degree is $d=8$ for $k=2$, $d=9$ for $k=3$, $d=10$ for $k=4$, and $d=18$ for $k=5$. The unexpected growth of $d$ from $k=4$ to $k=5$ suggests that finding a systematic solution may be difficult. Perhaps Conjecture~\ref{conj:kerdock} is not even true for large $k$, and four-point bounds will at some point be required.

\subsection{Other spherical codes}

Besides the codes listed in Table~\ref{table:newcases}, we use three-point bounds to give new proofs of optimality for the spherical code with $9$ points in $\R^3$. This case was previously known to be optimal \cite{SW51}, but three-point bounds were not known to be sharp. We conjecture that the same holds for $24$ points:

\begin{conjecture} \label{conjecture:snub}
Three-point bounds prove that the $24$ vertices of a snub cube are an optimal spherical code in three dimensions.
\end{conjecture}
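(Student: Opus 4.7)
The proposed proof follows the same strategy as our treatment of the codes in Table~\ref{table:newcases}. First I would tabulate the exact data of the snub cube on $S^2$: its $24$ vertices, its chiral octahedral symmetry group of order~$24$ (which acts transitively on the vertices), and the complete list of distinct pairwise inner products between vertices. All of these inner products lie in the cubic number field $K = \Q(t)$, where $t$ denotes the tribonacci constant, the real root of $t^3 - t^2 - t - 1 = 0$; in particular, the known optimal cosine $\cos\theta \approx 0.723$ from Table~\ref{table:knowncases} is one of these.

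Next, I would set up the three-point SDP bound described in Section~\ref{sec:computation} and search over the degree $d$ until the numerical bound matches $24$. I would symmetrize the SDP under the chiral octahedral action on ordered triples of points in $S^2$, which both shrinks the problem and restricts the symmetry type admissible for the optimal dual solution. Because the configuration is vertex- and edge-transitive but chiral, care is needed to handle the orbits of ordered versus unordered triples correctly and to decide whether reflections should be imposed or broken.

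The crux is rounding the high-precision numerical solution to an exact feasible dual solution over $K$. I would follow the rounding pipeline developed in this paper: impose the expected complementary slackness conditions (the single-variable constraint polynomial must vanish at each cosine of an angle between distinct vertices, and the matrix polynomial must have the appropriate kernel on the support of the three-point distribution of the snub cube), project the numerical solution onto the resulting affine subspace written in a $\Q$-basis of $K$-valued matrices, and verify positive semidefiniteness of the rounded matrices exactly using an $LDL^\top$ factorization over $K$.

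The main obstacle I anticipate is twofold. First, passing from $\Q$ to the cubic field $K$ triples the effective dimension of the exact linear algebra and requires tracking the real embedding (and the structure of the two complex embeddings) throughout the rounding step; this makes the verification significantly more delicate than for the codes in Table~\ref{table:newcases}, whose inner products are rational. Second, the minimal degree $d$ required for sharpness is hard to predict a priori: as the unexpected jump from $d=10$ at $k=4$ to $d=18$ at $k=5$ in the Kerdock family illustrates, three-point bounds can need a considerably larger $d$ than one might guess, and for the snub cube it is conceivable that the bound is simply not sharp at any feasible $d$, in which case one would be forced to introduce auxiliary polynomial multipliers (strengthening the SDP hierarchy) or to abandon three-point bounds in favor of four-point bounds.
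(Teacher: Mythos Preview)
This statement is a \emph{conjecture} in the paper, and the paper does not prove it. In fact, the authors explicitly report that they attempted essentially the approach you outline and were unable to complete it: numerically the three-point bound appears to be sharp at degree $d=19$, with the optimal face definable over a cubic number field, but the precision required to extract exact kernel vectors via the rounding procedure of Section~\ref{sec:roundingproc} exceeded the capacity of their implementation (see the final paragraph of Section~\ref{sec:computation} and the remark following Conjecture~\ref{conjecture:snub}).

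Your proposal is therefore not wrong so much as it is a restatement of the open problem, together with the same plan the paper already tried. Two remarks on specifics. First, your suggestion to symmetrize the SDP under the chiral octahedral action is not quite right as stated: the three-point bound~\eqref{eq:codebound} is formulated over inner-product triples $(u,v,t)\in\Delta$, not over point triples on $S^2$, and the snub cube's symmetry group does not act on this domain; the only symmetry the paper exploits in the SDP itself is the $S_3$ action permuting $u,v,t$. Second, the paper already pins down the obstacle: it is not an unknown degree $d$ (which is empirically $19$) nor the question of whether the bound is sharp, but rather the floating-point precision needed to recover the kernel vectors over the cubic field in the rounding step. A successful proof along these lines would need either a sharper rounding technique or substantially higher working precision than the authors could manage.
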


Robinson \cite{R61} proved that they form an optimal spherical code, but semidefinite programming bounds would provide a different proof. It is likely that Conjecture~\ref{conjecture:snub} could be proved by using our rounding technique, but so far the precision required seems to exceed the capacity of our implementation.

\subsection{Universal optimality}

Theorem~\ref{theorem:main} says that the optimal $288$-point spherical code in $\R^{16}$ is not just optimal, but \emph{universally optimal} in the sense of Cohn and Kumar \cite{CK07}. In other words, it minimizes the energy
\[
\sum_{\substack{x,y \in \mathcal{C}\\ x \ne y}} f(\langle x,y \rangle)
\]
for every absolutely monotonic function $f \colon [-1,1) \to \R$. Recall that an \emph{absolutely monotonic} function $f$ is infinitely differentiable and satisfies $f^{(k)} \ge 0$ for all $k \ge 0$. If we set $g(r) = f(1-r/2)$, then $f$ is absolutely monotonic if and only if $(-1)^k g^{(k)} \ge 0$ for all $k \ge 0$ (i.e., $g$ is \emph{completely monotonic}), and the energy becomes
\[
\sum_{\substack{x,y \in \mathcal{C}\\ x \ne y}} g(|x-y|^2).
\]
In other words, universally optimal configurations minimize energy for every completely monotonic potential function of squared Euclidean distance. For example, such functions include inverse power laws.

By contrast, the spherical codes based on the Gewirtz, Hoffman-Singleton, and $M_{22}$ graphs are not universally optimal. In particular, each of these codes is suboptimal for the logarithmic energy
\[
\sum_{\substack{x,y \in \mathcal{C}\\ x \ne y}} -\log |x-y|,
\]
as shown by explicit examples in the data set \cite{CdLL24} that accompanies this paper. Because
\[
\lim_{\varepsilon \to 0+} \frac{1/r^{\varepsilon}-1}{\varepsilon} = - \log r,
\]
these codes are also suboptimal for inverse power laws $r \mapsto 1/r^\varepsilon$ for small, positive values of $\varepsilon$.

We do not know whether the Kerdock spherical codes in dimensions greater than~$16$ are universally optimal. If they are, then it seems likely that universal optimality can be proved using similar techniques. For comparison, the $D_4$ root system, which is the four-dimensional Kerdock spherical code, is not universally optimal \cite{CCEK07}. Again we have no conceptual explanation for why the cases $k=1$ and $k=2$ differ in this respect.

\subsection{Organization of the paper}

In Sections~\ref{sec:roundingproc} and~\ref{sec:computation}, we explain our computational procedure and show how to obtain the optimality results from Tables~\ref{table:newcases} and~\ref{table:hypotheticalcases}. We then complete the proof of Theorem~\ref{theorem:main} by dealing with universal optimality in Section~\ref{sec:universal} and uniqueness in Section~\ref{sec:uniqueness}, and we conclude in Section~\ref{sec:conjectures} by discussing other potential targets for proofs of optimality. Finally, we verify in Appendix~\ref{app:two-point} that two-point bounds are not sharp, and we construct several of the codes from Section~\ref{sec:conjectures} in Appendix~\ref{app:constructions}.

\section{Rounding procedure}
\label{sec:roundingproc}

Our proof of Theorem~\ref{theorem:main} is based on semidefinite programming bounds. The primary technical obstacle is how to convert the approximate, floating-point output of a semidefinite programming solver to an exact optimal solution. Dostert, de Laat, and Moustrou \cite{DLM21} proposed a rounding method for this purpose, which works for some of the smaller semidefinite programs considered in this paper but is not fast enough for the larger problems we need. In this section we describe an improved approach, which is much faster for large semidefinite programs. For example, when we apply our improved method to the largest instance considered in \cite{DLM21}, rounding takes only $30$ seconds, compared to $8$ hours as reported in their paper.

A semidefinite program in primal form is an optimization problem of the form
\begin{equation}\label{eq:primalsdp}
\begin{aligned}
& \text{minimize} & & \langle C, X \rangle\\
& \text{subject to} & & \langle A_i, X \rangle = b_i \text{ for } i=1,\dots,m \text{ and}\\
&&&X \in \Smat_+^n.
\end{aligned}
\end{equation}
Here $\Smat_+^n$ is the cone of positive semidefinite $n \times n$ matrices over $\R$ (for comparison, $\Smat^n$ will denote the set of symmetric $n \times n$ matrices, and $\Smat_{++}^n$ the cone of positive definite $n \times n$ matrices), and $\langle A, B \rangle = \mathrm{tr}(A^{\sf T} B)$ is the trace inner product. The problem is specified by the symmetric $n \times n$ matrices $C, A_1, \dots, A_m$ and real scalars $b_1, \dots, b_m$, and we generally assume the affine constraints are linearly independent.
The corresponding dual semidefinite program reads
\[
\begin{aligned}
& \text{maximize} & & \langle b,y \rangle\\
& \text{subject to} & & C - \sum_{i=1}^m y_iA_i \in \Smat_+^n.
\end{aligned}
\]

We assume the existence of strictly feasible points for both the primal and dual semidefinite programs, which are solutions where the matrix is positive definite. By Slater's criterion, this condition guarantees the existence of optimal primal and dual solutions, and these solutions have the same objective value (see, for example, \S5.3.2 in \cite{BV04}). We also assume the existence of a strictly complementary solution, which is a pair $(X, y)$ of primal and dual solutions such that 
\[
\mathrm{rank}(X) + \mathrm{rank}\mathopen{}\left(C - \sum_{i=1}^m y_iA_i\right)\mathclose{} = n.
\]
Under this assumption, the central path (which is followed by interior-point methods used in practice) converges to the analytic center of the optimal face \cite{HKR02}. This means we can use a solver to find a numerical approximation of a point in the relative interior of the optimal face.

Although there exist rational semidefinite programs for which the following is not the case \cite{N10}, for many problems that we encounter in practice the affine hull of the optimal face is given by affine equations whose coefficients lie in a common algebraic number field of low degree. The rounding method from \cite{DLM21} finds and uses such a description to round a numerical solution to an exact optimal solution over the field of algebraic numbers. In this section, we explain this heuristic, and we explain our changes that make it much faster. 

In practice, the semidefinite programs we consider are in block form (where the cone $\Smat_+^n$ is replaced by a product of positive semidefinite matrix cones), but here we only consider the case of one block, as the extension to multiple blocks is immediate. We also only consider the case of rounding to rationals and postpone the case of rounding to a number field to Section~\ref{sec:notq}. 

One simple approach is to compute the solution to high precision (using an arbitrary precision solver) and then use the LLL lattice basis reduction algorithm \cite{LLL82} to round each entry of the numerical solution $X^*$ to a nearby algebraic number. This method does work for some examples, but in our experiments, it often does not work when the optimal solution is not unique. For example, we have not been able to find the number field for the analytic center of the semidefinite program we used in the optimality proof of the $56$-point spherical code in~$20$ dimensions, even after solving the semidefinite program to extremely high precision. It seems the analytic center, in this case, requires an algebraic number field of high degree or with large coefficients.

Simply projecting the solution into the affine space given by the constraints $\langle A_i, X \rangle = b_i$ for $i=1,\dots,m$ also often does not work. The issue here is that the dimension of the optimal face of a semidefinite program is usually smaller than the dimension of this affine space. This dimension mismatch implies that if we project a numerical solution into the affine space, the projected solution will generally no longer be positive semidefinite. 

The rounding procedure we use consists of three steps. First we find a suitable description of the affine hull of the optimal face (Section~\ref{sec:optface}), then we transform the problem (Section~\ref{sec:transform}), and finally we round the numerical solution to a solution of the transformed problem (Section~\ref{sec:round}). For each of the three steps, we first describe the approach from \cite{DLM21} and then give our new approach.

\subsection{Describing the optimal face}\label{sec:optface}

Let $\mathcal{S}$ be the set of feasible solutions of the primal semidefinite program \eqref{eq:primalsdp}, and $\mathcal{F}$ the face of $\mathcal{S}$ consisting of the optimal solutions. As shown in \cite{HW87}, the minimal face of $\Smat_+^n$ containing a matrix $X \in \Smat_+^n$ is given by
\[
\{Y \in \Smat_+^n : \ker X \subseteq \ker Y\}.
\]
This implies that if $X$ lies in the relative interior of $\mathcal{F}$, then 
\[
\mathcal{F} = \{ Y \in \Smat_+^n : \langle A_i, Y \rangle = b_i \text{ for } i=1,\dots,m \text{ and } \ker X \subseteq \ker Y\} 
\]
Note that $\ker X$ is independent of the choice of $X$, as long as $X$ lies in the relative interior of $\mathcal{F}$. We assume this kernel has a nice description over $\Q$, by which we mean it is the solution set of some affine functions whose coefficients are rational numbers of reasonably low bit size (i.e., not excessively large numerators or denominators).

The rounding procedure starts by using an interior-point method to find a solution $X^*$ close to the analytic center of the optimal face. Since the analytic center lies in the relative interior of $\mathcal{F}$, there exists a basis of its kernel of small bit size. We want to use $X^*$ to find such a basis.

To do so, the following procedure is used in \cite{DLM21}. First, a basis for the kernel of the numerical solution matrix $X^*$ is computed numerically (for this the dual solution could also be used), and the basis vectors are listed as the rows of a matrix. Then the LLL algorithm is used to compute integer relations between the columns of this matrix. Once sufficiently many relations have been found, the coefficient vectors of these relations are listed as the rows of an integral matrix and a basis for the kernel of this matrix is computed in exact arithmetic. The problem with this approach is that the LLL algorithm is too time-consuming for large matrices. The computational complexity for finding one relation scales as $n^6$ when $X^*$ is of size $n \times n$.

We therefore find the kernel vectors as follows. We list the $k$ numerically computed kernel vectors as the rows of a matrix $M \in \R^{k \times n}$, and let $M'$ be its reduced row-echelon form. Since the reduced row-echelon form is unique, and since $X^*$ approximates the analytic center of the optimal face, whose kernel vectors are assumed to be definable over $\Q$, the entries of $M'$ are close approximations of rational numbers. We then replace each entry in $M'$ by the corresponding rational number.

Although this approach works in principle (given that we use high enough floating point precision) and is much faster than the corresponding procedure in \cite{DLM21}, we notice that the obtained kernel vectors can have significantly larger bit size, which becomes an issue in Section~\ref{sec:transform} and \ref{sec:round}.

We would like to use the LLL algorithm to reduce the bit size of the vectors (note that this scales as $k^5n$ instead of the $n^6$ discussed above). When applied to the rows of $M'$, the algorithm finds integer linear combinations of the rows which are closer to being orthogonal to each other and have smaller norms. To make these outcomes correspond to a smaller bit size, $M'$ first needs to be converted to an integral matrix. If we do so by clearing the denominators per row, then the pivot columns of $M'$ become columns with exactly one large nonzero entry, so that these entries cannot be reduced by the LLL algorithm. We therefore convert $M'$ to an integral matrix using a different method.

Let $N \in \Q^{n \times (n-k)}$ be a matrix of full column rank such that $M' N = 0$, which is easily obtained since $M'$ is of full row rank and in reduced row-echelon form, and let $N'$ be the matrix obtained from $N$ by clearing denominators in each column. We then compute the row Hermite normal form 
\begin{equation}\label{eq:hnf}
\begin{pmatrix} H \\ 0_{k \times n} \end{pmatrix} = \begin{pmatrix} T_H \\ T_0 \end{pmatrix} N',    
\end{equation}
where 
\[
T = \begin{pmatrix} T_H \\ T_0 \end{pmatrix}
\]
is a unimodular transformation matrix with $T_0 \in \Z^{k \times n}$. It follows that the rows of $T_0$ form a basis of the row space of $M$, and thus an integral basis of the kernel. We use this matrix as the integral version of $M'$, and we can now further improve the bit size of the rows using the LLL algorithm. 

In the implementation, we also use this reduction approach to check heuristically whether we are using enough precision to compute $X^*$ and to round the entries of $M'$. In practice, the vectors obtained after reduction will still be approximate kernel vectors of $X^*$ if and only if we used high enough precision.

\subsection{Transforming the problem}\label{sec:transform}

In \cite{DLM21}, the affine hull of $\mathcal{F}$ is described as 
\[
\mathcal{L} = \{ X \in \Smat^n : \langle A_i, X \rangle = b_i \text{ for } i=1,\dots,m \text{ and } Xv_1 = \dots = X v_k = 0 \},
\]
where $v_1, \dots, v_k$ are the kernel vectors obtained in Section~\ref{sec:optface}. To obtain an exact optimal solution, $X^*$ can be projected into $\mathcal{L}$, so that near zero eigenvalues become exactly zero and larger eigenvalues stay positive. This approach, however, increases the problem size by a large factor. For example, for the semidefinite programs needed to prove universal optimality in Section~\ref{sec:universal}, it increases the number of constraints from $710$ to $10835$.

Instead of describing the affine hull of $\mathcal{F}$ by adding kernel constraints, we can describe it using a coordinate transform; see, for example, \cite[Section 31.5]{DL97}. Let $B$ be a rational, invertible matrix for which the first $k$ rows form a basis of the kernel. Applying this basis transformation to any matrix $X$ in the relative interior of the optimal face gives a matrix of the form 
\[
 B X B^{\sf T} = \begin{pmatrix}
     0 & 0 \\
     0 & \widehat X
 \end{pmatrix},
\]
where $\widehat X$ lies in the cone $\Smat^{n-k}_{++}$ of positive definite matrices. Let $\widehat A_i$ be the block of $B^{-\sf T} A_i B^{-1}$ corresponding to $\widehat X$, so that
\[
\langle \widehat A_i, \widehat X \rangle = \langle A_i, X\rangle.
\]
With
\[
\widehat{\mathcal{L}} = \{\widehat X \in \Smat^{n-k} : \langle \widehat A_i, \widehat X \rangle = b_i \text{ for } i=1, \dots, m \},
\]
each element in the spectrahedron $\Smat_+^{n-k} \cap \widehat{\mathcal{L}}$ corresponds to an optimal solution in the original semidefinite program. We can now obtain from $X^*$ a positive definite matrix $\widehat X^*$ close to $\widehat{\mathcal{L}}$, and projecting into $\widehat{\mathcal{L}}$ gives a matrix that corresponds to an exact optimal solution. 

To find the basis transformation matrix $B$, we consider two methods. In our experiments, both methods work well, and it is not clear which of these two methods is better.

The first option is to extend the basis of the kernel with linearly independent standard basis vectors. We can find these vectors by using Gram-Schmidt orthogonalization in floating-point arithmetic. 

The second option is to extend the basis by using the rows of the matrix $T_H$ from \eqref{eq:hnf}. Then $B$ consists of the LLL reduced rows of $T_0$ and the rows of $T_H$, so that $B$ is unimodular. To obtain a matrix $B$ with small bit size it is desirable that $T_H$ should have small entries. To achieve this we use the method of  \cite{HL12}: instead of \eqref{eq:hnf}, we consider the Hermite normal form of the matrix $N'$ with an identity matrix appended to it. This decomposition amounts to 
\[
\begin{pmatrix}
    H & T_H \\
    0 & T_0
\end{pmatrix} = \begin{pmatrix} T_H \\ T_0 \end{pmatrix} \begin{pmatrix} N' & I_{n}\end{pmatrix}, 
\]
where now $T_0$ is in row Hermite normal form and $T_H$ is reduced with respect to $T_0$.

\subsection{Rounding the solution}\label{sec:round}

Given an approximate solution $x^*$ to a linear system $Ax = b$, we want to find an exact solution $\bar x$ close to $x^*$. In the rounding procedure, the linear system comes from the constraints $\langle \widehat A_i, \widehat X \rangle = b_i$ for $i=1,\dots,m$. For simplicity of presentation, we assume first that the rows of $A$ are linearly independent (and will address the linearly dependent case shortly).

We start by replacing each entry of $x^*$ by its closest rational number given some fixed denominator (say, $10^{40}$). 

In the approach of \cite{DLM21}, the system $Ax=b$ is first converted to row echelon form, after which the exact solution $\bar x$ is found using back substitution, where for the non-pivot columns, the corresponding entries of $x^*$ are used. If the conditioning of $A$ is not too bad, then $\bar x$ will be close to $x^*$.

The problem with this approach is that computing the row-echelon form of the system $Ax = b$ has to be done in exact arithmetic and becomes too expensive for large systems. In fact, depending on how the semidefinite program is formulated (for example, using low-rank constraint matrices as we do in this paper), forming the matrix $A$ can already be relatively expensive.

Another approach is to compute the solution $\bar x$ to $Ax=b$ closest to $x^*$ as follows. The minimum norm solution of $A\varepsilon = b - A x^*$ is given by $\varepsilon = A^+ (b - A x^*)$, where $A^+ = A^{\sf T} (AA^{\sf T})^{-1}$ is the pseudoinverse (which has this expression since $A$ is of full row rank). Then $\bar x = x^* + \varepsilon$, where $\varepsilon = A^{\sf T}y$ and $y$ is the solution to
\[
AA^{\sf T} y = b - A x^*,
\]
which can be computed efficiently using Dixon's algorithm \cite{D82}.

This approach, however, also requires the construction of the matrix $A$, and in practice we find that the determinant of $A A^{\sf T}$ can have large bit size, which leads to a solution $\bar x$ of large bit size. For example, with this approach the universal optimality proofs in Section~\ref{sec:universal} each have size around $11$GB, which is about a factor $500$ larger than with the following method.

We first build an invertible matrix $S$ by randomly selecting $r$ linearly independent columns of $A$, where $A \in \Q^{r \times s}$. To do so efficiently, we first select a submatrix containing $\ell$ random columns of $A$ with $r \le \ell \le s$, turn this submatrix into an integral matrix by clearing the denominators of each row, and then perform row reduction modulo some large prime number $p$. If the reduced matrix has $r$ pivots, these pivots yield $r$ linearly independent columns in $A$, and otherwise we retry with a different set of initial columns (for the problems considered in this paper we used $\ell=10r$ and never needed to retry). 

If the assumption that the rows of $A$ are linearly independent does not hold, it will be impossible to find $S$ with $r$ linearly independent columns. In that case, we must remove linearly dependent constraints before we can construct $S$. To find the constraints that need to be removed, we use column reduction modulo a prime.

This overall approach works best when $s$ is much larger than $r$, which is the case in our applications, because it is now no longer necessary to construct the matrix $A$: we can compute $\varepsilon$ by solving
\[
S\varepsilon = b - Ax^*
\]
using Dixon's method, and $Ax^*$ can often be computed efficiently without having to construct $A$ (in our applications we use the low-rank structure to do this).

Although this approach is very fast, the conditioning of $S$ tends to be bad, which leads to $\bar x$ being too far from $x^*$, in which case some of the strictly positive eigenvalues may become negative. We therefore interpolate between the two approaches described above, where the first approach leads to nearby solutions with large bit size, and the second to far away solutions of small bit size.

To interpolate we extend $S$ by some randomly chosen columns of $A$ (say, $r/10$ many columns). We can then compute $\varepsilon = S^+ (b - Ax^*)$ as $\varepsilon = S^{\sf T} y$, where $y$ is the solution to $S S^{\sf T} y = b-Ax^*$. This approach works well in practice.

\subsection{Rounding to algebraic numbers}\label{sec:notq}

Although the overall approach does not change when rounding over an algebraic number field $F$ of degree $d > 1$, some steps need small modifications. We give only a brief description of these modifications since the most important change is already considered in \cite{DLM21}, and for the proofs in this paper we need only $d=1$.

Note first that, since the reduced row-echelon form can be computed over any field, we can still find the kernel vectors by recognizing the entries of the reduced row-echelon form $M'$ of $M$ as elements of $F$ using the LLL algorithm. 

However, our reduction approach in Section~\ref{sec:optface} works only over $\Q$ due to the use of the Hermite normal form. We therefore apply the reduction method to the matrix $M'' \in \Q^{dk \times dn}$ obtained from $M' \in F^{k \times n}$ by using a basis of $F$ over $\Q$. The rows of $M''$ are linearly independent over $\Q$ if and only if the rows of $M'$ are linearly independent over $F$. After applying the reduction, we convert back to a matrix in $F^{dk \times n}$ and find a subset of $k$ linearly independent vectors over $F$ using floating point arithmetic.

To find a solution to the system $Ax=b$ close to $x^*$, with $A$, $b$, and $x$ defined over $F$, we use the same approach as \cite{DLM21}. Let $g$ be a generator of $F$, and consider the expansions $A = \sum_i A_i g^i$, $b = \sum_i b_i g^i$, and $x = \sum_i x_i g^i$. Then
\[
\sum_{i,j} A_i x_j g^{i+j} = \sum_l b_l g^l,
\]
which defines a system of equations $A'x' = b'$ with $A' \in \Q^{dr \times ds}$, and where $x'$ and $b'$ are the concatenations of $x_i$ and $b_i$ for $i=0, \dots, d-1$. A numerical approximation of $x'$ can be found by adding the constraints $x^* = \sum_i x_i g^i$ and solving the system using floating-point arithmetic, after which Section~\ref{sec:round} can be applied.

\subsection{Finding the algebraic number field}\label{sec:findfield}

Although for the problems in this paper it was always easy to guess over which field the optimal face can be defined, this is in general not the case. Therefore we propose the following heuristic to find the field.

Let $M'$ be the reduced row-echelon form of the matrix which has the numerical kernel vectors as rows. The entries of $M'$ are approximations of elements of the field $F$ that we want to find. For each entry we can use the LLL algorithm to find the minimal polynomial of the algebraic number it approximates. 

In the heuristic we first find the minimal polynomials for a selection of the entries of $M'$, and let $p$ be the minimal polynomial of highest degree and lowest bit size. Then we iterate through the same selection of entries and use the LLL algorithm to check whether each entry lies in the field with minimal polynomial $p$. If we find an entry that does not lie in the field, we replace $p$ by the minimal polynomial of a linear combination (typically just the sum) of an approximate root of $p$ and that entry. In practice, this method often gives the correct field $F$, even when considering only a small selection of entries of $M'$.

\subsection{General applicability of the rounding procedure}

Although our rounding procedure has been developed to obtain rigorous bounds in discrete geometry, we expect this approach to be useful more generally. To illustrate its use, we consider two examples from the literature where sums-of-squares characterizations are used, so that the semidefinite program does not have a unique optimal solution and the naive rounding approach mentioned in the introduction of this section cannot be used. Moreover, the optimal faces in these examples cannot be defined over $\Q$, so that we need Section~\ref{sec:findfield}. 

Consider the examples from \cite{PS01,GP04} and \cite{PY19} of finding the global minimum of the polynomial
\[
u^4 + v^4 + t^4 - 4uvt + u + v + t
\]
and the minimum of Caprasse's polynomial
\[
-ut^3+4vt^2w + 4 utw^2 + 2vw^3 + 4  ut+ 4t^2 -10 vw - 10  w^2 +2
\]
over the domain $[-1/2, 1/2]^4$.
Using semidefinite programming and the rounding procedure, the correct number fields (of degree $6$ and $2$, respectively) as well as exact minimizers are found in seconds. Interestingly, the minimal polynomials of the number field generators found by the procedure are much simpler than those of the minima in these optimization problems (smaller by a factor 1000 in terms of bit size).

\subsection{Implementation}

We have implemented the rounding procedure in Julia \cite{BEKS07} using the computer algebra system Nemo \cite{FHTJ17}. The code is available as part of the open-source semidefinite programming solver \texttt{ClusteredLowRankSolver.jl}, which is available on Github\footnote{\url{https://github.com/nanleij/ClusteredLowRankSolver.jl}} and can be installed as a Julia package; a snapshot of the git repository is also available at \cite{CdLL24}. The three-point bound for spherical codes is included as an example. The documentation can also be found there, including a tutorial.

\section{Computations}\label{sec:computation}

To prove optimality of spherical codes, we use the three-point bounds by Bachoc and Vallentin \cite{BV08}. To reduce the size of the semidefinite programs, we follow \cite{GP04,MO18} to exploit the symmetry in the polynomials. In the optimization problems we omit some of the variables that are not necessary to obtain the sharp bounds in this paper. To solve the semidefinite programs we use the solver from \cite{LL22}, which exploits the low-rank structure in the semidefinite programming formulations.

The three-point bound depends on the ambient dimension $n$, the minimal angle~$\theta$ of the spherical code, and a parameter $d$ we call the \emph{degree} of the bound (higher degrees yield improved bounds, at increased computational expense). For each $k$ from $0$ to $d$, let $Y_k^n(u,v,t)$ be the $(d-k+1) \times (d-k+1)$ matrix with entries
\[
Y_{k}^n(u,v,t)_{ij} = u^i v^j (1-u^2)^{k/2}(1-v^2)^{k/2} P_k^{n-1}\mathopen{}\left(\frac{t-uv}{\sqrt{(1-u^2)(1-v^2)}}\right)\mathclose{}
\]
for $0 \leq i,j \leq d-k$, where $P_k^n$ is the Gegenbauer polynomial of degree $k$ with parameter $n/2-1$, normalized so that $P_k^n(1) = 1$; note that it is convenient to index these entries starting with zero. Set 
\[
S_k^n = \frac{1}{6}\sum_{\sigma \in S_3} \sigma Y_k^n,
\]
where the permutation group $S_3$ permutes the three arguments of $Y_k^n$. 

The three-point bound involves optimizing over the choice of an auxiliary function $F$ defined by
\[
F(u,v,t) = \sum_{k=0}^d \langle F_k, S_k^n(u,v,t) \rangle,
\]
where $F_0,\dots,F_d$ are symmetric matrices with $F_k$ being a $(d-k+1) \times (d-k+1)$ matrix.  
The Bachoc-Vallentin three-point bound (with some variables removed) then reads
\begin{equation}
\label{eq:codebound}
\begin{aligned}
& \text{minimize} & & 1 + \langle F_0, J\rangle &&\\
& \text{subject to} & & 3F(u,u,1) \leq -1 && \text{for } u \in [-1, \cos \theta], \\
& & & F(u,v,t) \leq 0 && \text{for } (u,v,t) \in \Delta, \\
& & & F_0,\dots,F_d \succeq 0, & 
\end{aligned}
\end{equation}
where $J$ is the $(d+1) \times (d+1)$ all-ones matrix, 
\[
\Delta = \{(u,v,t): -1 \leq u,v,t \leq \cos \theta \text{ and } 1+2uvt - u^2-v^2-t^2 \geq 0\},
\]
and $M \succeq 0$ means $M$ is a positive-semidefinite matrix.

We can see as follows that the objective function $1 + \langle F_0, J\rangle$ is an upper bound for spherical code size. Let $\mathcal{C}$ be a spherical code in $S^{n-1}$ with minimal angle at least $\theta$, and fix a point $e \in S^{n-1}$. If $F_k \succeq 0$, then it follows from the addition formula for spherical harmonics that
\[
(x,y) \mapsto \langle F_k, Y_k^n(\langle x, e\rangle, \langle y, e\rangle, \langle x, y\rangle)\rangle
\]
is a positive semidefinite kernel. This implies that if $F$ is feasible (i.e., it satisfies the constraints in the optimization problem), then
\[
\sum_{x,y,z \in \mathcal{C}} F(\langle x, z \rangle, \langle y, z \rangle, \langle x, y \rangle) \geq 0.
\]
Separating terms and using the affine constraints gives
\begin{align*}
0 &\leq \sum_{x,y,z \in \mathcal{C}} F(\langle x, z \rangle, \langle y, z \rangle, \langle x, y \rangle) 
\\ &=|\mathcal{C}| F(1,1,1) + 3\sum_{\substack{x, y \in \mathcal{C}\\ x \neq y}}F(\langle x, y \rangle, \langle x,y\rangle, 1) + \sum_{\substack{x,y,z \in \mathcal{C}\\ \text{$x,y,z$ distinct}}} F(\langle x, z \rangle, \langle y, z \rangle, \langle x, y \rangle) \\
&\leq |\mathcal{C}|F(1,1,1) - |\mathcal{C}| (|\mathcal{C}|-1),
\end{align*}
from which it follows that
\begin{equation}
\label{eq:upperbound}
|\mathcal{C}| \leq 1+ F(1,1,1) = 1 + \langle F_0, J \rangle,
\end{equation}
because $S_k^n(1,1,1) = 0$ for $k>0$ and $S_0^n(1,1,1)=J$.

This argument shows that \eqref{eq:codebound} gives an upper bound for the maximal cardinality of a spherical code with minimal angle at least $\theta$. If this bound matches the code size, then the inequalities in the above derivation must hold with equality, which implies that $3F(\langle x, y \rangle, \langle x, y \rangle, 1) = -1$ for all distinct $x,y \in \mathcal{C}$. In other words, the zeros of the polynomial $3F(u,u,1)+1$ are the only inner products other than $1$ that can appear in a maximal code. This condition is a special case of complementary slackness in semidefinite programming.

So far, we have shown that every code $\mathcal{C}$ with minimal angle at least~$\theta$ must satisfy $|\mathcal{C}| \le 1 + \langle F_0, J\rangle$, but not yet that every code achieving this bound must be an optimal spherical code. To see why this must be the case, note first that the polynomial $3F(u,u,1)+1$ cannot vanish identically, since if it did, then \eqref{eq:upperbound} would imply that $|\mathcal{C}| \le 2/3$. Therefore complementary slackness shows that there are only finitely many possible inner products between distinct points in $\mathcal{C}$, namely the roots of $3F(u,u,1)+1$. If there were a code $\mathcal{C}$ with $|\mathcal{C}| = 1 + \langle F_0, J\rangle$ and minimal angle strictly greater than~$\theta$, then every sufficiently small perturbation of $\mathcal{C}$ would be a code with minimal angle at least~$\theta$, which would contradict the finite list of possible inner products. Thus, codes achieving equality in the three-point bound must be optimal codes.

To model the optimization problem~\eqref{eq:codebound} as a semidefinite program, we use sums-of-squares polynomials and symmetry reduction. Let $p(u) = (1+u)(\cos \theta - u)$, and let $m_\delta$ be a vector whose elements form a basis of the univariate polynomials up to degree $\delta$. Then the univariate constraint can be modeled as
\[
3F(u,u,1) + h_0(u)  + p(u) h_1(u) = -1,
\]
where $h_0(u) = \langle X_{1}, m_d m_d^{\sf T} \rangle$ and $h_1(u) = \langle X_2, m_{d-1} m_{d-1}^{\sf T} \rangle$ are sum-of-squares polynomials (in other words, $X_1$ and $X_2$ are positive semidefinite matrices).

To model the trivariate constraint efficiently using the symmetries, we describe the domain $\Delta$ as $\{(u,v,t) : s_i(u,v,t) \ge 0 \text{ for } i=1,\dots,4\}$, where
\begin{align*}
    s_1 &= p(u)+p(v)+p(t), & s_2 &= p(u)p(v) + p(v)p(t) + p(u)p(t), \\
    s_3 &= p(u)p(v)p(t), & s_4 &= 1+2uvt-u^2-v^2-t^2.
\end{align*}
Since these polynomials are $S_3$-invariant, the trivariate constraint can be enforced by setting
\[
F(u,v,t) + q_0 + \sum_{i=1}^4 s_i q_i = 0
\]
where $q_0,\dots,q_4$ are $S_3$-invariant sum-of-squares polynomials. In practice we use sum-of-squares polynomials such that $s_i q_i$ is of degree at most $2d$ for $i=0, \dots, 4$ (where we set $s_0 = 1$).

To describe the $S_3$-invariant sum-of-squares polynomials of degree $2d$, we use the formulation from \cite{GP04}, where the low-rank decomposition is more explicit than in \cite{MO18}. Let $b_{\delta}$ be a vector whose entries form a basis of the $S_3$-invariant polynomials of degree at most $\delta$. Let 
\begin{align*}
v_\mathrm{alt} &= (u-v)(v-t)(u-t),\\
v_{\mathrm{std},1} &= \frac{1}{\sqrt{2}}\begin{pmatrix}2u-v-t \\ 2vt - ut -uv\end{pmatrix}, \text{ and}\\
v_{\mathrm{std},2} &= \frac{\sqrt{3}}{\sqrt{2}}\begin{pmatrix}
    v-t \\ ut - uv
\end{pmatrix}.
\end{align*}
Define
\[
V_\mathrm{triv} = b_{d} b_{d}^{\sf T}\quad\text{and}\quad V_\mathrm{alt} = v_{\mathrm{alt}}^2 b_{d-3} b_{d-3}^{\sf T}
\]
and let $V_\mathrm{std}$ be the submatrix of
\[
(v_{\mathrm{std},1} \otimes b_{d-1}) (v_{\mathrm{std},1} \otimes b_{d-1})^{\sf T} + (v_{\mathrm{std},2} \otimes b_{d-1}) (v_{\mathrm{std},2} \otimes b_{d-1})^{\sf T} 
\]
in which the diagonal entries have degree at most $2d$ and $\otimes$ denotes the Kronecker product. Note that the square root factors in $v_{\mathrm{std}}$ get squared, so that the entries of this matrix are polynomials with rational coefficients.
Then every $S_3$-invariant sum of squares of polynomials $q$ of degree at most $2d$ is of the form
\[
q = \langle X_{\mathrm{triv}}, V_{\mathrm{triv}} \rangle + \langle X_{\mathrm{alt}}, V_{\mathrm{alt}} \rangle + \langle X_{\mathrm{std}}, V_{\mathrm{std}} \rangle
\]
for some positive definite matrices $X_{\mathrm{triv}}$, $X_{\mathrm{alt}}$, and $X_{\mathrm{std}}$. 

We solve the resulting semidefinite programs for the parameters listed in Table~\ref{table:computations}, where we also list the degrees and timing information. In each case, $|\mathcal{C}| = 1 + \langle F_0, J\rangle$, and therefore $\mathcal{C}$ must be optimal. Furthermore, in each case $3F(u,u,1)+1$ has no roots in $[-1,\cos \theta]$ other than the inner products that occur in $\mathcal{C}$; this observation will play a key role in proving uniqueness, because it shows that no other inner products can occur in any optimal code.

\begin{table}
\caption{The degree $d$ we use to obtain sharp three-point bounds, the number of digits of accuracy to which we solve the semidefinite programs, and the time in seconds it took to solve the semidefinite programs, round the solutions, and verify the proofs. For the last two cases, the three-point bound was already known to be sharp.}
\label{table:computations}
\begin{tabular}{rllcccc}
\toprule
$n$ & $\cos \theta$ & $d$ & Digits & Solve & Round & Check\\
\midrule
$16$ & $1/4$ & 8 & $40$ & $205$ & $4$ & $2$\\ 
$64$ & $1/8$ & 9 & $40$ & $484$ & $7$ & $6$\\
$256$ & $1/16$ & 10 & $40$ & $1075$ & $15$ & $17$\\ 
$1024$ & $1/32$ & 18 & $100$ & $219131$ & $6147$ & $3135$\\
\vspace{-0.6em}\\
$20$ & $1/15$ & 5 & $40$ & $9$ & $0.2$ & $0.1$\\
$21$ & $1/21$ & 5 & $40$ & $9$ & $0.2$ & $0.06$\\
$21$ & $1/12$ & 5 & $40$ & $9$ & $0.3$ & $0.06$\\
\vspace{-0.6em}\\
$55$ & $1/25$  & 5 & $40$ & $9$ & $0.2$ & $0.08$\\
$55$ & $1/22$ & 5 & $40$ & $10$ & $0.2$ & $0.07$\\
$56$ & $1/28$ &5 & $40$ & $10$ & $0.3$ & $0.07$\\
$56$ & $1/20$ & 5 & $40$ & $10$ & $0.3$ & $0.07$\\
$115$ & $3/115$ &5 & $40$ & $10$ & $0.2$ & $0.08$\\
$120$ & $1/45$ &5 & $40$ & $10$ & $0.3$ & $0.08$\\
$143$ & $1/65$ & 6 & $40$ & $34$ & $0.9$ & $0.6$\\
$1520$ & $1/456$ & 9 & $60$ & $586$ & $10$ & $17$\\
\vspace{-0.6em}\\
$3$ & $1/3$ & 10 & $40$ & $958$ & $23$ & $24$\\
\vspace{-0.6em}\\
$4$ & $1/6$ & 4 & $40$ & $4$ & $0.1$ & $0.03$\\
$3$ & $1/\big(\sqrt{8}+1\big)$ & 7 & $60$ & $112$ & $210$ & $100$\\
\bottomrule
\end{tabular}
\end{table}

The data files and verification script are available at \cite{CdLL24}. The verification code checks that the data matrices are positive semidefinite and that the affine constraints are satisfied exactly, and then prints the rigorous bound $1+\langle F_0,J\rangle$ and the zeros of $3F(u,u,1)+1$ in the interval $[-1,\cos \theta]$. To check that these are the only zeros in the interval, we use Sturm sequences. To verify that the matrices are positive semidefinite, we use a Cholesky factorization in ball arithmetic to check rigorously that the transformed variable matrices are positive definite. For checking the affine constraints, the matrices $S_k^n$ are constructed in the verification script, and we supply low-rank decompositions of the sum-of-squares constraint matrices so that nonnegativity of the sum-of-squares polynomials $h_0, h_1, q_0, \dots, q_4$ is immediate. To verify the correctness of the proofs, one needs to check the verification script and run the script on the data files. Table~\ref{table:computations} lists the verification times.

For the case of $24$ point in $\R^3$ (see \cite{R61}), the three-point bound seems to be sharp with degree $19$, but we have not been able to round the solution to an exact optimal solution. Here the required number field seems to be of degree $3$, and although our implementation does support general algebraic number fields, the precision required to find the right kernel vectors with our approach seems to be too high to be manageable so far.

\section{Universal optimality}
\label{sec:universal}

We prove not just universal optimality for the Nordstrom-Robinson spherical code, but also a strong form of uniqueness:

\begin{theorem} \label{theorem:univopt}
For each absolutely monotonic potential function $f \colon [-1,1) \to \R$, the Nordstrom-Robinson spherical code $\mathcal{C}$ minimizes the energy
\[
\sum_{\substack{x,y \in \mathcal{C}\\ x \ne y}} f(\langle x,y \rangle)
\]
over all subsets $\mathcal{C} \subseteq S^{15}$ with $|\mathcal{C}|=288$, and it is the unique minimizer up to isometry unless $f$ is a polynomial of degree at most~$5$.
\end{theorem}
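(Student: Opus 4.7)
My plan is to extend the three-point semidefinite programming bound of Section~\ref{sec:computation} from a cardinality bound to an $f$-dependent energy bound, in the spirit of the Cohn--Kumar~\cite{CK07} adaptation of two-point linear programming. Given an absolutely monotonic potential $f$, I seek a univariate polynomial $h$ whose Gegenbauer expansion in dimension~$16$ has nonnegative coefficients, together with an auxiliary trivariate function $F(u,v,t)=\sum_{k=0}^d\langle F_k,S_k^{16}(u,v,t)\rangle$ with $F_k\succeq 0$ and $F\le 0$ on~$\Delta$, satisfying the modified univariate inequality
\[
h(u)+3F(u,u,1)\le f(u)\quad\text{for } u\in[-1,1/4].
\]
Running the splitting of $\sum_{x,y,z\in\mathcal{C}'}F(\langle x,z\rangle,\langle y,z\rangle,\langle x,y\rangle)\ge 0$ used in Section~\ref{sec:computation}, and simultaneously invoking $\sum_{x,y\in\mathcal{C}'}h(\langle x,y\rangle)\ge 288^2\hat h_0$ from the two-point positive-definite argument, yields the lower bound
\[
\sum_{\substack{x,y\in\mathcal{C}'\\x\ne y}}f(\langle x,y\rangle)\ \ge\ 288^2\hat h_0-288\bigl(h(1)+F(1,1,1)\bigr)
\]
for every $288$-point code $\mathcal{C}'\subseteq S^{15}$; the corresponding semidefinite program maximizes the right-hand side.

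Saturation at the Nordstrom--Robinson code $\mathcal{C}$ is enforced by complementary slackness: the slack polynomial $f(u)-h(u)-3F(u,u,1)$ must vanish to second order at each interior inner product in $\{-1/4,0,1/4\}$ and to first order at $-1$; the kernel $F$ must vanish on every triple of inner products realized by distinct points of $\mathcal{C}$; and any Gegenbauer coefficient $\hat h_k$ must be zero whenever $k$ exceeds the spherical $5$-design strength of $\mathcal{C}$. For a single fixed $f$ these conditions reduce to a finite-dimensional SDP feasibility problem that should be solvable at degree $d=8$ (the same degree that suffices for the cardinality proof in Table~\ref{table:computations}) using the solver and rounding scheme of Sections~\ref{sec:roundingproc}--\ref{sec:computation}. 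To handle \emph{all} absolutely monotonic $f$ simultaneously, I write $f$ as a nonnegative combination of the one-parameter family $f_s(t)=(1-t)^{-s}$ for $s>0$ together with polynomial terms, construct the associated family $(h^{(s)},F^{(s)})$ of witnesses via Hermite interpolation at the four inner products, and verify feasibility of the family uniformly in~$s$.

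The main obstacle is precisely this uniform feasibility: certifying $F_k^{(s)}\succeq 0$, $F^{(s)}\le 0$ on~$\Delta$, and $\hat h_k^{(s)}\ge 0$ for every $s>0$ amounts to a parametric positivity statement that goes beyond any single computer calculation, and is typically reduced to endpoint analysis ($s\to 0^+$ and $s\to\infty$) together with a monotonicity argument in between. Once this is in hand, the uniqueness claim follows from complementary slackness: unless $f$ is a polynomial of degree at most~$5$, the slack polynomial's zeros in $[-1,1/4]$ are exactly the four inner products $-1,-1/4,0,1/4$ of $\mathcal{C}$, forcing every $288$-point energy minimizer to have only these inner products, whence the uniqueness part of Section~\ref{sec:uniqueness} identifies the minimizer with $\mathcal{C}$ up to isometry. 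The degree-$5$ exception reflects the spherical $5$-design property of $\mathcal{C}$, under which $\sum_{x\ne y}p(\langle x,y\rangle)$ depends only on $|\mathcal{C}'|$ for every polynomial $p$ of degree at most~$5$ and every $288$-point $5$-design $\mathcal{C}'$, so other $5$-designs of the same size realize the same energy and uniqueness provably fails in that range.
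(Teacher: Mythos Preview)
Your framework is broadly right in spirit---set up a three-point energy bound and force complementary slackness at the inner products $\{-1,-1/4,0,1/4\}$---but the proposal has a genuine gap at exactly the point you flag as ``the main obstacle,'' and the paper resolves it by a different mechanism that you are missing.

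You propose to cover all absolutely monotonic $f$ by writing $f$ as a nonnegative combination of the one-parameter family $f_s(t)=(1-t)^{-s}$ plus polynomial terms, and then certifying feasibility of the witnesses $(h^{(s)},F^{(s)})$ uniformly in $s$. Two problems: first, it is not clear (and you do not argue) that every absolutely monotonic function on $[-1,1)$ decomposes this way; second, even granting the decomposition, the uniform-in-$s$ semidefinite feasibility is a parametric positivity statement you do not know how to prove, and ``endpoint analysis plus monotonicity'' is not a method---it is a hope. The paper sidesteps this entirely. Using Lemmas~9 and~10 of \cite{CW12}, every absolutely monotonic $f$ is bounded below by its Hermite interpolant $h$ of order~$11$ at $-1$ and order~$2$ at each of $-1/4,0,1/4$, and $h$ has the \emph{same} energy on $\mathcal{C}$ as $f$. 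Since $h$ lies in the nonnegative cone spanned by seventeen explicit polynomials, and six of these are handled automatically by the $5$-design property, the infinite-dimensional problem collapses to eleven rigorous SDP computations (at degree $d=13$, not $d=8$). This finite-basis reduction is the key idea your plan lacks.

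Two further points. The paper uses the $T_k^n$ formulation of \cite{CW12} rather than your hybrid $h+F$ split; either can be made to work, but your complementary-slackness condition ``$\hat h_k=0$ for $k>5$'' is not forced---what is forced is $\hat h_k\sum_{x,y}P_k(\langle x,y\rangle)=0$, and the code's higher Gegenbauer sums need not vanish. For uniqueness, the paper does something sharper than you describe: it replaces each basis polynomial $p$ by $p(x)-(1+x)^{11}(x+1/4)^2x^2(x-1/4)^2/10000$, so that the sharp bound for the \emph{modified} potential forces any minimizer for $p$ to have only the four inner products of $\mathcal{C}$; and the degree-$\le 5$ exception is pinned down by a Rolle's-theorem/analyticity argument showing that if the Hermite interpolant of $f$ has degree $\le 5$ then $f$ itself is that polynomial.
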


By contrast, uniqueness fails when $f$ is a polynomial of degree~$4$ or less. To see why, recall that when $\mathcal{C}$ is a spherical $t$-design (i.e., when every multivariate polynomial of total degree at most $t$ has the same average over $\mathcal{C}$ as over the entire sphere), then it minimizes energy for every absolutely monotonic polynomial of degree at most $t$. The Nordstrom-Robinson code is a spherical $5$-design, and all we need to check is that there is a spherical $4$-design of the same size that is not isometric to it. To construct one, let $\mathcal{C}$ be the Nordstrom-Robinson spherical code as obtained from the binary code in Section~\ref{subsec:kerdock}, and for each point $(x_1,\dots,x_{16})$ in $\mathcal{C}$ with $x_{16}<0$, replace it with $(-x_1,-x_2,\dots,-x_{15},x_{16})$, while keeping the points with $x_{16} \ge 0$ the same. The result is not isometric to $\mathcal{C}$, since it has inner products $\pm 1$, $7/8$, $-3/8$, $\pm 1/4$, $\pm 1/8$, and $0$, compared with just $\pm 1$, $\pm 1/4$, and $0$ for $\mathcal{C}$, but one can check that it is still a $4$-design. For comparison, constructing a different $3$-design is simpler: if we decompose $\mathcal{C}$ into nine cross polytopes, then each cross polytope is itself a $3$-design, and they can be rotated independently. We do not know whether $\mathcal{C}$ is the unique $288$-point spherical $5$-design in $\R^{16}$ up to isometry, but the existence of another $5$-design is the only way uniqueness could fail for polynomials of degree~$5$ in Theorem~\ref{theorem:univopt}.

To prove Theorem~\ref{theorem:univopt}, we use the three-point bound for energy from \cite{CW12}, which works as follows. For studying $N$ points in $S^{n-1}$, instead of the matrices $S^n_k(u,v,t)$ from the previous section, we use
\[
T^n_k(u,v,t) = (N-2) S^n_k(u,v,t) + S^n_k(u,u,1) + S^n_k(v,v,1) + S^n_k(t,t,1).
\]
Suppose
\[
H(u,v,t) = c + \sum_{k = 0}^d \langle F_k, T^n_k(u,v,t) \rangle,
\]
where $c \in \R$ and $F_0,\dots,F_d$ are positive semidefinite matrices. If 
\[
H(u,v,t) \le (f(u)+f(v)+f(t))/3
\]
whenever
\[
-1 \leq u,v,t < 1 \quad\text{and}\quad 1+2uvt - u^2-v^2-t^2 \geq 0,
\]
then the energy of an $N$-point configuration in $S^{n-1}$ under the potential function $f$ is at least
\[
\frac{N}{2} \big( (N-1)c - \langle F_0, J \rangle\big).
\]
As in the case of spherical code bounds, expressing the inequality $H(u,v,t) \le (f(u)+f(v)+f(t))/3$ using sums of squares turns this optimization problem into a semidefinite program when $f$ is a polynomial. This semidefinite program can then be solved numerically, and the numerical solution can be rounded to obtain a rigorous proof.

One obstacle is that the cone of absolutely monotonic functions is infinite-dimensional and of course does not consist only of polynomials. To circumvent this issue, we use the techniques from Section~4 of \cite{CW12} to produce a finite basis for a slightly larger cone and prove optimality for all potential functions in that larger cone. Specifically, by Lemmas~9 and~10 in \cite{CW12}, each absolutely monotonic function $f$ on $[-1,1)$ is bounded below by a nonnegative linear combination of the polynomials 
{\allowdisplaybreaks\begin{align*}
&1,\quad 1+x,\quad (1+x)^2,\quad \dots,\quad (1+x)^{11},\\
&(1+x)^{11}(x+1/4),\\
&(1+x)^{11}(x+1/4)^2,\\
&(1+x)^{11}(x+1/4)^2x,\\
&(1+x)^{11}(x+1/4)^2x^2, \text{ and}\\
&(1+x)^{11}(x+1/4)^2x^2(x-1/4)
\end{align*}
}for which $\mathcal{C}$ has the same energy, namely the Hermite interpolation of $f$ to order~$11$ at $-1$ and order~$2$ at $\pm 1/4$ and $0$. (In other words, it is the unique polynomial $h$ of degree less than $11+2+2+2=17$ that $h^{(k)}(-1) = f^{(k)}(-1)$ for $k < 11$, $h^{(k)}(\pm 1/4) = f^{(k)}(\pm 1/4)$ for $k < 2$, and $h^{(k)}(0) = f^{(k)}(0)$ for $k < 2$.) In particular, if $\mathcal{C}$ minimizes the energy for each of these seventeen basis polynomials, then it is universally optimal. Because $\mathcal{C}$ is a spherical $5$-design, it automatically minimizes energy for $1$, $1+x$, $(1+x)^2$, \dots, $(1+x)^5$, and thus only eleven cases remain.

We can obtain a sharp three-point bound in each of these cases using degree $d=13$,  and rounding to rationals as described in Section~\ref{sec:roundingproc} yields a rigorous proof of optimality. The data set \cite{CdLL24} for this paper includes all the exact matrices as well as code for verifying that they work.

For each of the basis polynomials $p(x)$ except for those of degree up to~$5$, we in fact replace the polynomial $p(x)$ with $p(x)-(1+x)^{11}(x+1/4)^2x^2(x-1/4)^2/10000$ and prove a sharp bound for this modified polynomial. The reason is that for $t \in [-1,1)$,
\[
p(t)-(1+t)^{11}(t+1/4)^2t^2(t-1/4)^2/10000 \le p(t),
\]
with equality only for $t \in \{-1, \pm 1/4, 0\}$. Lowering the basis polynomials in this way ensures that equality can hold for the energy under $p$ only for codes with the same inner products as $\mathcal{C}$. The Nordstrom-Robinson code is the only such code up to isometry, by Theorem~\ref{thm:unique} below, and thus we obtain uniqueness for these basis polynomials, aside from those of degree at most~$5$.

To conclude that uniqueness holds in Theorem~\ref{theorem:univopt}, all we need to check is that if $f$ is an absolutely monotonic function on $[-1,1)$ whose Hermite interpolation $h$ to order~$11$ at $-1$ and order~$2$ at $\pm 1/4$ and $0$ has degree at most~$5$, then $f$ is itself a polynomial of degree at most~$5$, namely $h$. More generally, the only way $h$ can have degree less than~$16$ is if $f=h$. To see why, note that because $f-h$ has $17$ roots (counting with multiplicity), iterating Rolle's theorem shows that $(f-h)^{(16)}(t) = 0$ for some $t \in (-1,1)$. Because $h^{(16)}=0$ by assumption, we must have $f^{(16)}(t)=0$, which implies that $f^{(16)}$ vanishes identically on $[-1,t)$ by absolute monotonicity. By Theorem~3a in Chapter~IV of \cite{W41}, $f$ is analytic and therefore $f^{(16)}=0$ everywhere. Thus, $f$ must be a polynomial of degree less than~$16$ and is therefore equal to~$h$, because $f-h$ has $17$ roots.

\section{Uniqueness}
\label{sec:uniqueness}

We have shown in Section~\ref{sec:computation} that each of the codes listed in Table~\ref{table:newcases} is optimal, and that the only possible angles that can occur between distinct points are those listed in the table. Furthermore, we have proved universal optimality in Section~\ref{sec:universal}. All that remains for the proof of Theorem~\ref{theorem:main} is to prove uniqueness for dimensions up to~$64$.

\begin{theorem} \label{thm:unique}
There is a unique optimal spherical code in $\R^{16}$ with $288$ points, up to isometry. 
\end{theorem}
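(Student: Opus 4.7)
The plan is to leverage the sharp three-point bound from Section~\ref{sec:computation} to constrain the combinatorial structure of any optimal code, and then to reduce to the uniqueness of the Nordstrom-Robinson binary code due to Snover \cite{S73}. Throughout, let $\mathcal{C} \subseteq S^{15}$ be an optimal code with $|\mathcal{C}| = 288$. We already know that all inner products between distinct points of $\mathcal{C}$ lie in $\{-1,-1/4,0,1/4\}$, since these are the only zeros of $3F(u,u,1)+1$ in $[-1,1/4]$ for the optimal auxiliary function $F$ we produced.

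The first step is to determine the two- and three-point distance distributions. Complementary slackness applied to the derivation in Section~\ref{sec:computation} forces
\[
F(\langle x,z\rangle,\langle y,z\rangle,\langle x,y\rangle)=0
\]
for every triple of distinct $x,y,z\in\mathcal{C}$, because $F\le0$ pointwise on $\Delta$ while the total three-point sum vanishes at optimality. Thus the three-point distance distribution of $\mathcal{C}$ is supported on the zero locus of $F$. Combined with the vanishing of $\langle F_k,\sum_{x,y}S_k^n(\,\cdot\,,\cdot\,,\langle x,y\rangle)\rangle$ for each $k$ (the moment identities forced by the positive semidefinite kernel structure when $F_k$ lies in the optimal face), I expect to deduce that every point has exactly one antipode, exactly $30$ orthogonal neighbors, and exactly $128$ neighbors at each of $\pm1/4$; in particular $\mathcal{C}$ is antipodal.

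The second step is to decompose $\mathcal{C}$ into cross polytopes. Working modulo antipodes gives $144$ lines in $\R^{16}$, pairwise either orthogonal or at inner product $\pm1/4$, with each line having $15$ orthogonal partners. I would show that the orthogonality relation on lines is transitive by checking that the inner-product triples $(0,0,\pm 1/4)$ lie outside the zero locus of $F$; this reduces to evaluating the explicit rational $F$ at a single triple and checking strict negativity, a finite rigorous computation. Transitivity forces the $144$ lines to split into cliques under orthogonality, and the partner count together with the dimension bound forces each clique to have exactly $16$ lines. This produces a partition of $\mathcal{C}$ into nine cross polytopes with all cross-frame inner products in $\{\pm1/4\}$, i.e.\ a maximal system of nine real mutually unbiased bases in $\R^{16}$, as in Section~\ref{subsec:kerdock}.

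For the final step, rotate so that one of the nine frames is $\{\pm e_1,\dots,\pm e_{16}\}$. The remaining $256$ points then have every coordinate in $\{\pm1/4\}$, so recording each sign as a bit produces a binary code $C\subseteq\F_2^{16}$ with $|C|=256$. The spherical inner product set $\{0,\pm1/4\}$ between distinct non-basis points translates into $C$ having minimum Hamming distance exactly $6$, matching the parameters $(16,256,6)$ of the Nordstrom-Robinson code. Snover's theorem \cite{S73} asserts that such a binary code is unique up to coordinate permutation and translation, and this uniqueness lifts back through the reconstruction to give uniqueness of $\mathcal{C}$ up to isometry. The main obstacle I anticipate is the first step: the complementary slackness identity on its own constrains only global sums of multiplicities, and forcing the pointwise count of $30$ orthogonal neighbors at every point, rather than just on average, is the technical heart of the argument and is what drives the entire cross polytope decomposition that follows.
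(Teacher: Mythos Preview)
Your overall architecture matches the paper's proof: restrict the inner products via complementary slackness, show antipodality, decompose into nine cross polytopes, choose one as the coordinate frame, and invoke Snover's uniqueness theorem for the resulting $(16,256,6)$ binary code. The endgame is identical.

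Where you diverge is in the middle two steps, and in both places the paper takes a much shorter route that sidesteps the obstacle you flag. For antipodality, the paper does not attempt to extract pointwise neighbor counts from moment identities at all. Instead it observes that if some $x\in\mathcal{C}$ had $-x\notin\mathcal{C}$, then $\mathcal{C}\cup\{-x\}$ would be a code of size $289$ with maximal inner product still $1/4$ (since $\langle -x,y\rangle\in\{-1/4,0,1/4,1\}$ for every $y\in\mathcal{C}$), contradicting the three-point bound of $288$. This one-line argument replaces your entire first step. For the cross polytope decomposition, the paper does not check whether $(0,0,\pm 1/4)$ lies outside the zero locus of $F$, nor does it need the per-point count of orthogonal neighbors; it simply quotes Proposition~3.12 of \cite{CCKS97}, which directly asserts that $144$ lines in $\R^{16}$ with pairwise angles in $\{\pi/2,\arccos(1/4)\}$ partition into nine pairwise-orthogonal $16$-frames.

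So the obstacle you anticipate---upgrading average neighbor counts to pointwise ones---is real for your route but entirely avoidable. Your transitivity-via-$F$ idea is plausible (the triple $(0,0,1/4)$ does not occur in the Nordstrom-Robinson code, so one expects $F$ to be strictly negative there), and if it checks out numerically it would give a self-contained alternative to the CCKS97 citation; but you would still need the per-line orthogonal-partner count to force clique sizes equal to $16$, and that count is exactly what you have not established. The paper's two shortcuts eliminate both dependencies.
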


The proof simply amounts to combining the complementary slackness conditions from our bound with the results of \cite{CCKS97} and \cite{S73}.

\begin{proof}
Let $\mathcal{C}$ be such a code. As shown in Section~\ref{sec:computation}, complementary slackness implies that the only possible inner products that can occur between the points of $\mathcal{C}$ are those that occur in the Nordstrom-Robinson spherical code, namely $0$, $\pm 1/4$, and $\pm 1$. Furthermore, $\mathcal{C}$ must be antipodal (closed under multiplication by $-1$), because if $x$ were a point of $\mathcal{C}$ such that $-x \not\in \mathcal{C}$, then $\mathcal{C} \cup \{-x\}$ would have maximal inner product $1/4$ and size $289$, which contradicts the three-point bound. Thus, the points of $\mathcal{C}$ lie on $144$ lines through the origin, with angles $\pi/2$ and $\arccos(1/4)$ between them.

Now we can apply Proposition~3.12 from \cite{CCKS97}, which says that these $144$ lines can be partitioned into $9$ sets of $16$ orthogonal lines, with pairs of lines from different sets not being orthogonal. If we pick one of these sets of $16$ orthogonal lines and use it to define our coordinate system, then $32$ points of $\mathcal{C}$ lie on these coordinate axes, and the remaining $256$ points have all their coordinates equal to $\pm 1/4$. The signs of these coordinates define a binary code $\widetilde{\mathcal{C}} \subseteq \{0,1\}^{16}$; specifically, $c \in \widetilde{\mathcal{C}}$ iff the point $((-1)^{c_1},\dots,(-1)^{c_{16}})/4$ is in $\mathcal{C}$. For $c, c' \in \widetilde{\mathcal{C}}$, the inner product of the corresponding points in $\mathcal{C}$ is $1-d(c,c')/8$, where $d(c,c') = |\{i : c_i \ne c'_i\}|$ is the Hamming distance between $c$ and $c'$. Thus, we have a binary code $\widetilde{\mathcal{C}}$ with $|\widetilde{\mathcal{C}}| = 256$ and minimal Hamming distance $6$. However, the Nordstrom-Robinson code is the only such code by Theorem~10.2.1 in \cite{S73}, in the sense that any other code with these parameters is obtained by translating and permuting the coordinates. These operations amount to isometries of $\R^{16}$, and thus $\mathcal{C}$ is unique up to isometry.
\end{proof}

The same argument also proves uniqueness in $64$ dimensions, using the proof from \cite{P15} of uniqueness of the Kerdock code of block length~$64$ given its weight enumerator:

\begin{theorem}
There is a unique optimal spherical code in $\R^{64}$ with $4224$ points, up to isometry. 
\end{theorem}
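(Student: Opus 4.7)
The plan is to imitate the proof of Theorem~\ref{thm:unique} verbatim, substituting the $64$-dimensional data for the $16$-dimensional data. Let $\mathcal{C}$ be an optimal spherical code in $\R^{64}$ with $|\mathcal{C}|=4224$. By the complementary slackness analysis of Section~\ref{sec:computation}, the only inner products that occur between distinct points of $\mathcal{C}$ are $0$, $\pm 1/8$, and $\pm 1$, matching those of the Kerdock spherical code. The code must also be antipodal: otherwise $\mathcal{C}\cup\{-x\}$ for a missing antipode $-x$ would be a $4225$-point code with maximal inner product $1/8$, contradicting the sharp three-point bound. Thus the points of $\mathcal{C}$ lie on $2112$ lines through the origin, meeting pairwise at angles $\pi/2$ or $\arccos(1/8)$.

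Second, I would invoke Proposition~3.12 of~\cite{CCKS97}, whose conclusion applies in every dimension $2^{2k}$ (not only for $k=2$): the $2112$ lines partition into $33$ sets of $64$ mutually orthogonal lines such that lines from different sets are non-orthogonal. Fixing one such set as the coordinate frame, $128$ points of $\mathcal{C}$ sit on the axes, and the remaining $4096$ points have all coordinates equal to $\pm 1/8$. Reading off the signs defines a binary code $\widetilde{\mathcal{C}}\subseteq\{0,1\}^{64}$ with $|\widetilde{\mathcal{C}}|=4096$, and the identity $\langle x,y\rangle = 1 - d(c,c')/32$ translates the five allowed inner products into Hamming distances $0$, $28$, $32$, $36$, and $64$. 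In particular $\widetilde{\mathcal{C}}$ has minimum distance $28$, and antipodality of $\mathcal{C}$ forces $\widetilde{\mathcal{C}}$ to contain the all-ones word.

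Finally, I would apply Phelps's uniqueness theorem~\cite{P15}, which characterizes the Kerdock code of block length $64$ by its weight enumerator. Because coordinate permutations and translations of binary codes are realized by signed permutations of $\R^{64}$, uniqueness of $\widetilde{\mathcal{C}}$ up to these operations yields uniqueness of $\mathcal{C}$ up to isometry. The main obstacle, which was absent from the $16$-dimensional case because~\cite{S73} yields uniqueness already from the parameters $(16,256,6)$, is pinning down the full weight distribution of $\widetilde{\mathcal{C}}$: knowing only its length, cardinality, and minimum distance is not enough to invoke~\cite{P15}. I expect this to be handled by the complementary slackness conditions that accompany equality in the Bachoc--Vallentin bound, which control the numbers of ordered pairs and triples of points at each allowed configuration in $\mathcal{C}$ and thereby force the pair distribution of $\mathcal{C}$, and hence the weight enumerator of $\widetilde{\mathcal{C}}$, to coincide with that of the Kerdock code.
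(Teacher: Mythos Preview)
Your proposal follows the paper's proof almost exactly through the construction of the binary code $\widetilde{\mathcal{C}}\subseteq\{0,1\}^{64}$ with $4096$ codewords and distances in $\{0,28,32,36,64\}$. The one point of divergence is how you obtain the full distance distribution needed to invoke \cite{P15}. You propose to extract it from the complementary slackness conditions of the three-point bound, but you leave this as an expectation rather than a verified step, and it is not clear without further argument that those conditions pin down the pair distribution uniquely.

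The paper closes this gap more directly, using only the structure already in hand. Once the $2112$ lines are partitioned into $33$ mutually unbiased orthogonal frames, the distance distribution of $\widetilde{\mathcal{C}}$ is forced combinatorially: for a fixed codeword $c$ (corresponding to a point $x$), the antipode $-x$ lies on the same line (one neighbor at distance $64$); the other $63$ lines in $x$'s frame contribute $126$ orthogonal points (distance $32$); and the remaining $31\cdot 128=3968$ points lie in other frames and hence have inner product $\pm 1/8$ with $x$. Antipodality pairs each point at inner product $1/8$ with its negative at $-1/8$, so these split evenly into $1984$ at distance $28$ and $1984$ at distance $36$. This is exactly the input required by Theorem~1 of \cite{P15}, with no appeal to further slackness information. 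You should replace your speculative final paragraph with this counting argument.
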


\begin{proof}
Let $\mathcal{C}$ be such a code. As in the previous proof, $\mathcal{C}$ must be antipodal, and Proposition~3.12 from \cite{CCKS97} says that the $2112$ lines through points in $\mathcal{C}$ can be partitioned into $33$ sets of $64$ orthogonal lines, with pairs of lines from different sets not being orthogonal. We again choose coordinates corresponding to one of the $33$ sets of orthogonal lines, and use the remaining $32$ sets to define a binary code $\widetilde{\mathcal{C}} \subseteq \{0,1\}^{64}$ with $|\widetilde{\mathcal{C}}|=4096$. Based on the decomposition into sets of orthogonal lines, we know that each codeword in $\widetilde{\mathcal{C}}$ has $1984$ neighbors at Hamming distance~$28$, $126$ at distance~$32$, $1984$ at distance $36$, and $1$ at distance $64$. Finally, Theorem~1 in \cite{P15} shows that such a code is unique up to translating and permuting the coordinates, as desired.
\end{proof}

In exactly the same way, we can deduce uniqueness of the Kerdock binary code of block length $64$:

\begin{theorem}
There is a unique binary code of block length~$64$ with $4096$ codewords and minimal distance~$28$, up to translating and permuting the coordinates.
\end{theorem}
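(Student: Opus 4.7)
The plan is to reduce the statement to the spherical code uniqueness theorem just above. Given any binary code $B \subseteq \{0,1\}^{64}$ with $|B|=4096$ and minimum distance~$28$, I would form the spherical code
\[
\mathcal{C} = \{\pm e_i : 1 \le i \le 64\} \cup \bigl\{\bigl((-1)^{c_1},\dots,(-1)^{c_{64}}\bigr)/8 : c \in B\bigr\} \subseteq S^{63},
\]
which has $128+4096=4224$ points. First I would verify that $\mathcal{C}$ is a legitimate spherical code with maximal inner product~$1/8$: inner products among the $\pm e_i$ lie in $\{0,\pm 1\}$, inner products between a coordinate vector and a codeword vector are $\pm 1/8$, and inner products between two codeword vectors take the form $1-d/32$ with $d\ge 28$, hence lie in $[-1,1/8]$.

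Because the three-point bound proved sharp in Section~\ref{sec:computation} gives $|\mathcal{C}|\le 4224$ for any spherical code in $\R^{64}$ with maximal inner product at most~$1/8$, the code $\mathcal{C}$ is automatically an optimal $4224$-point spherical code. I would then apply the uniqueness theorem for this spherical code, or rather rerun the argument used there: by complementary slackness $\mathcal{C}$ is antipodal with inner products in $\{0,\pm 1/8,\pm 1\}$, and Proposition~3.12 of \cite{CCKS97} partitions the $2112$ lines through $\mathcal{C}$ into $33$ sets of $64$ mutually orthogonal lines, with pairs from distinct sets not being orthogonal. The $128$ vectors $\pm e_i$ form one such set by construction, so I would take these as the coordinate axes; then every non-coordinate point of $\mathcal{C}$ has inner product $\pm 1/8$ with each $\pm e_i$, so all of its coordinates are $\pm 1/8$. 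Reading off the signs recovers precisely the original binary code $B$, and the line-set combinatorics forces $B$ to have the weight enumerator stated in the spherical proof: $1984$ codewords at distance~$28$, $126$ at~$32$, $1984$ at~$36$, and $1$ at~$64$ from each fixed codeword. Theorem~1 of \cite{P15} then identifies $B$ with the Kerdock code up to translation and coordinate permutation.

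The only subtle point, which I would want to check carefully, is that the $128$ coordinate vectors $\pm e_i$ really do constitute one of the $33$ orthogonal line-sets produced by \cite{CCKS97}, rather than sitting transversally across several of them. This is immediate here because the sets in Proposition~3.12 are determined intrinsically by the orthogonality graph on the lines of $\mathcal{C}$, and the $64$ lines $\R e_i$ are pairwise orthogonal while no codeword vector $((-1)^{c_1},\dots,(-1)^{c_{64}})/8$ is orthogonal to any $e_i$. So no genuine obstacle remains, and the argument is a clean reuse of the spherical case plus \cite{P15}.
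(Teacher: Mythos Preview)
Your proposal is correct and follows exactly the route the paper intends: the paper's proof of this theorem is simply the sentence ``In exactly the same way, we can deduce uniqueness of the Kerdock binary code of block length~$64$,'' meaning one embeds the given binary code into the $4224$-point spherical code, invokes the sharp three-point bound to get optimality (and hence complementary slackness and antipodality), applies Proposition~3.12 of \cite{CCKS97}, and finishes with \cite{P15}. Your check that the $64$ coordinate lines form one of the $33$ orthogonal blocks (because they are a maximal orthogonality clique disconnected from all codeword lines) is the one detail the paper leaves implicit, and your justification is correct.
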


It seems not to be known whether the Kerdock code of block length $256$ is unique. However, uniqueness is known to fail for block length $1024$, and more generally for block length $2^{2n}$ whenever $2n-1$ is composite, using a construction from \cite{K82a,K82b}. Furthermore, Proposition~3.16 from \cite{CCKS97} shows that the corresponding spherical codes are not isometric to each other.

All that remains in the proof of Theorem~\ref{theorem:main} is to prove uniqueness for the codes constructed using triangle-free strongly regular graphs.

\begin{theorem}
There are unique optimal spherical codes with $56$ points in $\R^{20}$, $50$~points in $\R^{21}$, and $77$ points in $\R^{21}$, up to isometry.
\end{theorem}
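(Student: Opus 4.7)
The plan is to show that any optimal $\mathcal{C}$ must coincide up to isometry with the spectral embedding of the corresponding target graph (Gewirtz, Hoffman-Singleton, or $M_{22}$), after which uniqueness follows from the classical characterization of each of these graphs as the unique strongly regular graph with its parameters; see \cite{BvM22}. As in the proofs for the Nordstrom-Robinson and Kerdock codes, the starting point is complementary slackness: the analysis of Section~\ref{sec:computation} shows that between distinct points of $\mathcal{C}$, the only inner products that occur are the two values $\alpha$ and $\beta$ listed in Table~\ref{table:newcases}, so $\mathcal{C}$ is a spherical two-distance set. Form the graph $G$ on $\mathcal{C}$ by declaring $x \sim y$ iff $\langle x, y\rangle = \beta$ (the smaller, negative value); if $G$ is isomorphic to the target strongly regular graph, then $\mathcal{C}$ is pinned down up to isometry by spectral embedding into the minimum eigenspace of $G$, which has the correct dimension $n$.

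The bulk of the work is then to show that $G$ is strongly regular with the stated parameters. Writing $A$ for the adjacency matrix of $G$, the Gram matrix
\[
\Gamma = (1-\alpha) I + \alpha J + (\beta-\alpha)A
\]
of $\mathcal{C}$ is positive semidefinite of rank $n$, and Cauchy--Schwarz applied to its eigenvalues gives $\mathrm{tr}(\Gamma^2) \geq |\mathcal{C}|^2/n$. The next step is to prove that $\mathcal{C}$ is a spherical $2$-design, so that equality holds and $\Gamma = (|\mathcal{C}|/n) P$ for some rank-$n$ orthogonal projection $P$. The $1$-design property (implied by $2$-design) then gives $\Gamma \mathbf{1} = 0$ and hence $P \mathbf{1} = 0$, so rewriting $A$ as an affine combination of $I$, $J$, and $P$ shows that $A$ has exactly three distinct eigenvalues on the invariant subspaces $\mathrm{span}(\mathbf{1})$, $\mathbf{1}^\perp \cap \mathrm{range}(P)$, and $\mathbf{1}^\perp \cap \ker(P)$. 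In each of the three target cases the resulting eigenvalues and multiplicities are precisely those of the corresponding strongly regular graph, as one checks directly from $v = |\mathcal{C}|$, $n$, $\alpha$, and $\beta$; hence $G$ is strongly regular with the desired parameters.

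The main obstacle is proving the spherical $2$-design property for $\mathcal{C}$. The plan is to extract it from the sharpness of the three-point bound: in the derivation, the step $\sum_{x,y,z \in \mathcal{C}} F(\langle x,z\rangle, \langle y,z\rangle, \langle x,y\rangle) \geq 0$ decomposes as a sum over $k$ of nonnegative contributions $\langle F_k, M_k(\mathcal{C})\rangle$ with both $F_k$ and $M_k(\mathcal{C})$ positive semidefinite (the latter by the addition formula for spherical harmonics), so sharpness forces each contribution to vanish. Whenever the dual certificate from Section~\ref{sec:computation} exhibits a nonzero $F_k$ of sufficient rank, this vanishing should translate, again via the addition formula, into $\sum_{x,y \in \mathcal{C}} P_k^n(\langle x,y\rangle) = 0$, i.e.\ the $k$-design condition on $\mathcal{C}$. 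Verifying for $k=1,2$ in each of the three target cases that the computed dual certificates satisfy the needed nondegeneracy, and formulating this translation precisely, is where the delicate work will lie. Once the $2$-design property and hence the strong regularity of $G$ is in hand, invoking the uniqueness of the Gewirtz graph with parameters $(56,10,0,2)$, the Hoffman--Singleton graph with parameters $(50,7,0,1)$, and the $M_{22}$ graph with parameters $(77,16,0,4)$ completes the argument.
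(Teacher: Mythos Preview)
Your overall architecture matches the paper's: complementary slackness forces the inner products to be exactly the two listed values, so $\mathcal{C}$ is a two-distance set; once $\mathcal{C}$ is shown to be a spherical $2$-design, the associated graph is strongly regular with the target parameters; and the Gewirtz, Hoffman--Singleton, and $M_{22}$ graphs are each the unique strongly regular graph with their parameters, giving uniqueness of the code. The paper simply cites Theorem~7.4 of \cite{DGS77} for the implication ``$2$-design and two-distance set $\Rightarrow$ strongly regular with determined parameters,'' whereas you re-derive it via the spectral structure of the Gram matrix; either route is fine.

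The real divergence, and a genuine gap, is in how you propose to establish the $2$-design property. The Bachoc--Vallentin blocks $S_k^n$ are built from $P_k^{n-1}$, the Gegenbauer polynomial for $S^{n-2}$ (arising from the stabilizer of a point), not from $P_k^n$; so vanishing of $\langle F_k, M_k(\mathcal{C})\rangle$ does not translate into $\sum_{x,y} P_k^n(\langle x,y\rangle)=0$ in the way you suggest, and extracting the design property from the three-point certificate along these lines would require a different and much more delicate argument than the one sketched. The paper sidesteps the certificate entirely at this step. With $N$, $n$, and the two cosines $\alpha<\beta$ fixed, write $k$ for the average number of neighbors at cosine $\alpha$; the nonnegativity of the degree-$1$ and degree-$2$ Gegenbauer sums (Theorem~5.5 of \cite{DGS77}) gives
\[
k\alpha + (N-k-1)\beta + 1 \ge 0,\qquad k\Big(\alpha^2-\tfrac{1}{n}\Big) + (N-k-1)\Big(\beta^2-\tfrac{1}{n}\Big) + \Big(1-\tfrac{1}{n}\Big) \ge 0,
\]
with equality in both exactly when $\mathcal{C}$ is a $2$-design. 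In all three cases $\alpha < -\beta < 0 < \beta$, so the coefficients $\alpha-\beta$ and $\alpha^2-\beta^2$ of $k$ have opposite signs; the two inequalities therefore bound $k$ from opposite sides, and one checks arithmetically that the bounds coincide. Hence both hold with equality and $\mathcal{C}$ is a $2$-design. This argument is elementary and uses nothing about the SDP solution beyond the list of admissible cosines already obtained from complementary slackness.
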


\begin{proof}
As shown in Section~\ref{sec:computation} using complementary slackness, an optimal code in one of these cases must be a two-distance set, with the distances specified in Table~\ref{table:newcases}. It therefore defines a graph, with vertex set equal to the code and edges corresponding to the larger distance (i.e., smaller inner product). If we can show that this graph is what we expect, namely the Gewirtz, Hoffman-Singleton, or $M_{22}$ graph, then the code is unique up to isometry, because any permutation of the points in the code that preserves all the pairwise distances corresponds to an isometry of the Euclidean space they span.

Each of these graphs is the only strongly regular graph with its parameters \cite{G69b,BH93,HS60,B83}, up to isomorphism. It will suffice to show that the code must be a spherical $2$-design, because by Theorem~7.4 in \cite{DGS77}, any spherical $2$-design that is a two-distance set must yield a strongly regular graph whose parameters are uniquely determined by the dimension, size, and angles of the design.

Let $\mathcal{C} \subseteq \R^n$ be the code in question, with $|\mathcal{C}|=N$ and inner products $\alpha$ and $\beta$ between distinct points such that $\alpha<\beta$, and let $k = |\{(x,y) \in \mathcal{C}^2 : \langle x,y \rangle = \alpha\}|/N$. Then the characterization of spherical designs in terms of Gegenbauer polynomial sums (Theorem~5.5 in \cite{DGS77}) shows that
\[
k \alpha + (N-k-1)\beta + 1 \ge 0 \quad\text{and}\quad k(\alpha^2-1/d) + (N-k-1)(\beta^2-1/d) + (1^2-1/d) \ge 0,
\]
with equality if and only if $\mathcal{C}$ is a spherical $2$-design. In each of our three cases, $N$, $n$, $\alpha$, and $\beta$ are determined, as in Table~\ref{table:newcases}, and one can check that the only way for $k$ to satisfy these two inequalities is if both of them are equalities. (The reason is that the coefficients of $k$ in these inequalities are $\alpha-\beta$ and $\alpha^2-\beta^2$, which have opposite signs in our cases because $\alpha < -\beta < 0 < \beta$. Thus, any value of $k$ that makes both inequalities sharp must be the unique solution.) We conclude that $\mathcal{C}$ must be a spherical $2$-design and is therefore unique, as desired.
\end{proof}

The same argument as in the previous proof shows that any spherical code with the hypothetical parameters shown in Table~\ref{table:hypotheticalcases} must be a spectral embedding of a strongly regular graph. However, it is unknown whether such graphs would be unique if they exist.

\section{Conjecturally optimal codes}
\label{sec:conjectures}

Our results in this paper show that three-point bounds are sharp more often than was previously known, and it seems likely that the same is true for four-point bounds and beyond. It is natural to ask which spherical codes might be provably optimal using $k$-point bounds for small $k$, but no clear pattern seems to predict sharp cases. In the absence of such a pattern, we instead give a list of possibilities in Table~\ref{table:smallcodes} as targets for future bounds. Each of these codes has at most a hundred points and at least $2n+1$ points in $n$ dimensions, has a transitive symmetry group, and appears to optimize both the minimal distance given the number of points and the number of points given the minimal distance. We expect that Table~\ref{table:smallcodes} is not a complete list of such codes, but we are not aware of any other candidates for which optimality has not yet been proved. See \cite{codetables} for numerical coordinates for those codes, and Appendix~\ref{app:constructions} for exact coordinates in the cases not covered by the references cited in the table.

\begin{table}
\caption{Conjecturally optimal spherical codes with at most a hundred points and transitive symmetry groups.
Each code consists of $N$ points in $S^{n-1}$ with minimal angle $\theta$, $m$ distances
between distinct points, strength $t$ as a spherical design,
and automorphism group $G$. For coordinates, see \cite{codetables}.}
\label{table:smallcodes}
\begin{tabular}{rllrrrl}
\toprule
$n$ & $N$ & $\cos \theta$ & $m$ & $t$ & $|G|$ & References\\
\midrule
$4$ & $24$ & $1/2$ & $4$ & $5$ & $1152$ & \cite{S01}\\
$4$ & $26$ & $0.54078961\dots$ & $9$ & $3$ & $52$ & \cite{SHS,N95}\\
$4$ & $50$ & $0.69372612\dots$ & $18$ & $3$ & $100$ & \cite{SHS,N95}\\
$5$ & $40$ & $1/2$ & $4$ & $3$ & $3840$ & \cite{KZ73}\\
$6$ & $20$ & $3/14$ & $7$ & $1$ & $20$ & \cite{W09}\\
$6$ & $32$ & $1/3$ & $3$ & $3$ & $23040$ & \cite{LS66}\\
$6$ & $72$ & $1/2$ & $4$ & $5$ & $103680$ & \cite{KZ73}\\
$7$ & $18$ & $1/\big(4\sqrt{3}+1\big)$ & $3$ & $1$ & $216$ & \cite{A97}\\
$7$ & $24$ & $1/\big(2\sqrt{5}+1\big)$ & $4$ & $1$ & $480$ & \cite{W09}\\
$8$ & $64$ & $1/3$ & $4$ & $3$ & $12288$ & \cite{CHS96,EZ01}\\
$8$ & $72$ & $5/14$ & $5$ & $3$ & $725760$ & \cite{CHS96}\\
$9$ & $32$ & $1/\big(4\sqrt{2}+1\big)$ & $3$ & $1$ & $43008$ & \cite{EZ01}\\
$9$ & $36$ & $1/5$ & $3$ & $3$ & $1440$ & \cite{EZ01}\\
$9$ & $96$ & $1/3$ & $4$ & $3$ & $221184$ & \cite{CHS96}\\
$10$ & $40$ & $1/6$ & $4$ & $3$ & $1920$ & \cite{SHS}\\
$10$ & $66$ & $4/15$ & $3$ & $2$ & $7920$ & \cite{EZ01}\\
$11$ & $30$ & $1/9$ & $4$ & $1$ & $720$ & \cite{C22}\\
$12$ & $26$ & $1/14$ & $5$ & $1$ & $52$ & \cite{EZ01}\\
$12$ & $78$ & $1/4$ & $4$ & $3$ & $11232$ & \cite{CHS96}\\
$13$ & $32$ & $1/\big(4\sqrt{5}+5\big)$ & $3$ & $1$ & $1920$ & \cite{SHS}\\
$13$ & $36$ & $1/\big(4\sqrt{6}+1\big)$ & $3$ & $1$ & $4320$ & \\
$13$ & $42$ & $1/8$ & $3$ & $1$ & $336$ & \cite{EZ01}\\
$13$ & $48$ & $1/\big(4\sqrt{3}+1\big)$ & $3$ & $1$ & $380160$ & \cite{EZ01}\\
$14$ & $42$ & $1/10$ & $3$ & $2$ & $5040$ & \cite{EZ01}\\
$14$ & $64$ & $1/7$ & $3$ & $3$ & $21504$ & \cite{EZ01}\\
$14$ & $70$ & $1/6$ & $3$ & $2$ & $5040$ & \cite{EZ01}\\
$16$ & $34$ & $1/18$ & $3$ & $2$ & $272$ & \cite{EZ01}\\
$17$ & $64$ & $1/9$ & $3$ & $1$ & $20643840$ & \cite{EZ01}\\
$18$ & $96$ & $1/7$ & $3$ & $1$ & $2211840$ & \cite{EZ01}\\
$19$ & $40$ & $1/21$ & $2$ & $1$ & $7680$ & \cite{EZ01}\\
$19$ & $60$ & $1/11$ & $2$ & $1$ & $5760$ & \\
$20$ & $42$ & $1/22$ & $2$ & $1$ & $5040$ & \cite{EZ01}\\
$20$ & $60$ & $1/13$ & $2$ & $1$ & $23040$ & \\
$20$ & $81$ & $1/10$ & $2$ & $2$ & $233280$ & \cite{BH92}\\
$24$ & $50$ & $1/26$ & $3$ & $2$ & $1200$ & \cite{EZ01}\\
\bottomrule
\end{tabular}
\end{table}

We see no reason to believe that candidates for sharp $k$-point bounds should necessarily have transitive symmetry groups. For example, the projective code analyzed in \cite{CW12} using three-point bounds does not. However, codes with transitive symmetry groups are among the best-behaved codes, and this condition keeps Table~\ref{table:smallcodes} from becoming excessively long.

We conjecture that all the codes listed in Table~\ref{table:smallcodes} are optimal, but of course it is difficult to say for certain. We expect that most of them are indeed optimal, but all we can say with confidence is that it seems difficult to improve on each of them. Two of these codes, with $40$ points in $\R^{10}$ and with $64$ points in $\R^{14}$, are conjectured to be universally optimal \cite{BBCGKS09}, but the others are not universally optimal.

Some of the codes listed in Table~\ref{table:smallcodes} may be considerably more difficult to handle than others. For example, the $D_4$ root system (with $24$ points in $\R^4$) appears to be unique, but the $D_5$ root system (with $40$ points in $\R^5$) has at least two equally good spherical codes that are not isometric to it \cite{L67b,S23}. The complementary slackness conditions for $D_5$ would therefore have to take into account all three codes. The most dramatic case may be $72$ points in $\R^8$, which is not rigid. This code is most naturally constructed in the subspace of $\R^9$ with coordinate sum zero. The code is the union $\mathcal{C} \cup -\mathcal{C}$, where $\mathcal{C}$ is the $36$-point code consisting of the permutations of $(7, 7, -2, -2, -2, -2, -2, -2, -2)/\sqrt{126}$. (Equivalently, $\mathcal{C}$ consists of the midpoints of the edges in a regular simplex.) The maximal inner product of $\mathcal{C} \cup -\mathcal{C}$ is $5/14$, but the maximal inner product between $\mathcal{C}$ and $-\mathcal{C}$ is just $2/7$. In other words, $\mathcal{C}$ and $-\mathcal{C}$ are not adjacent, and either one can be perturbed arbitrarily without getting too close to the other, as long as the perturbation is small enough. This non-rigidity is analogous to that of the fluid diamond packing in \cite{CS95}, and it would greatly constrain a proof of optimality using $k$-point bounds. By contrast, we believe that every other code listed in Table~\ref{table:smallcodes} is rigid. Their rigidity could likely be verified rigorously using the techniques of \cite{CJKT11}, but we have not done so.

Another potential complication is the need for algebraic numbers. In particular, the cosine of the minimal angle in the code with $26$ points in $\R^4$ has minimal polynomial
\[
3392x^6+2112x^5-496x^4-656x^3-132x^2+6x-1,
\]
while for $50$ points in $\R^4$ the minimal polynomial is
\[
\begin{split}
308224x^{10}&+168960x^9-94720x^8-119680x^7-21760x^6\\
\phantom{}&+10752x^5+3040x^4+120x^3+80x^2+10x-1.
\end{split}
\]
For comparison, all the other cases in Table~\ref{table:smallcodes} have minimal polynomials of degree at most~$2$. Having to use algebraic numbers certainly does not preclude a proof, but it can make the computations more cumbersome.

One of the most remarkable codes listed in Table~\ref{table:smallcodes} is the code with $81$ points in $\R^{20}$ and maximal inner product $1/10$. This code can be obtained as a spectral embedding of the unique strongly regular graph with parameters $(81,20,1,6)$ (see \cite{BH92}), which is not triangle-free. However, this code behaves in many ways like the Hoffman-Singleton, Gewirtz, and $M_{22}$ codes.
For example, it is the kissing configuration of the optimal code with $112$ points in $\R^{21}$, and the optimal code with $162$ points in $\R^{21}$ is an antiprism consisting of two copies of this code on parallel planes (see \cite[Section~3.7]{BBCGKS09}).

\appendix
\section{Two-point bounds}
\label{app:two-point}

\begin{table}
\caption{Two-point bounds for the codes from Table~\ref{table:newcases}.}
\label{table:LPbounds}
\begin{tabular}{rrlll}
\toprule
$n$ & $N$ & $\cos \theta$ & Two-point bound for $N$\\
\midrule
$16$ & $288$ & $1/4$ & $312$\\
$20$ & $56$ & $1/15$ & $69.6585\dots$\\
$21$ & $50$ & $1/21$ & $64$\\
$21$ & $77$ & $1/12$ & $86.3902\dots$\\
$64$ & $4224$ & $1/8$ & $4448$\\
$256$ & $66048$ & $1/16$ & $67968$\\
$1024$ & $1050624$ & $1/32$ & $1066496$\\
\bottomrule
\end{tabular}
\end{table}

\begin{table}
\caption{Dual two-point bounds matching Table~\ref{table:LPbounds}, using the notation from Appendix~\ref{app:two-point}.}
\label{table:LPbounds2}
\begin{tabular}{rllll}
\toprule
$n$ & Cosines $t_1,\dots,t_m$ & Weights $w_1,\dots,w_m$ \\
\midrule
$16$ & $-\frac{1}{2}\pm\frac{\sqrt{3}}{6}$, $\frac{1}{4}$, $1$ & $\frac{3825}{46}\pm\frac{2115}{46}\sqrt{3}$, $\frac{3328}{23}$, $1$\\[0.15cm]
$20$ & $-\frac{16}{35}$, $\frac{1}{15}$, $1$ & $\frac{4802}{451}$, $\frac{26163}{451}$, $1$\\[0.15cm]
$21$ & $-\frac{11}{21}$, $\frac{1}{21}$, $1$ & $7$, $56$, $1$\\[0.15cm]
$21$ & $-\frac{13}{33}$, $\frac{1}{12}$, $1$ & $\frac{14641}{861}$, $\frac{58880}{861}$, $1$\\[0.15cm]
$64$ & $-\frac{1}{2}\pm\frac{\sqrt{627}}{66}$, $\frac{1}{8}$, $1$ & $\frac{1178199}{1042}\pm\frac{892485}{19798}\sqrt{627}$, $\frac{1138688}{521}$, $1$ \\[0.15cm]
$256$ & $-\frac{1}{2}\pm\frac{\sqrt{1419}}{86}$, $\frac{1}{16}$, $1$ & $\frac{15594423}{914}\pm\frac{4553399}{10054}\sqrt{1419}$, $\frac{15466496}{457}$, $1$ \\[0.15cm]
$1024$ & $-\frac{1}{2}\pm\frac{\sqrt{231363}}{1026}$, $\frac{1}{32}$, $1$ & $\frac{17508632031}{65602}\pm\frac{1492406775}{2689682}\sqrt{231363}$, $\frac{17473470464}{32801}$, $1$\\
\bottomrule
\end{tabular}
\end{table}

Table~\ref{table:LPbounds} shows that the two-point bound is not sharp for the spherical codes listed in Table~\ref{table:newcases}, and Table~\ref{table:LPbounds2} provides the information needed to verify the two-point bounds in Table~\ref{table:LPbounds} and their optimality. Recall that the \emph{two-point bound} for spherical codes in $S^{n-1}$ with minimal angle $\theta$ is given by the infimum of $f(1)/f_0$ over all single-variable polynomials $f$ with real coefficients such that $f(x) \le 0$ for $-1 \le x \le \cos \theta$ and $f(x) = \sum_{k=0}^d f_k C_k^{n/2-1}(x)$ with nonnegative coefficients $f_i$ (and $f_0>0$), where the polynomials $C_k^{\lambda}$ are the Gegenbauer polynomials with generating function
\[
\sum_{k=0}^\infty C_k^\lambda(x)z^k = \frac{1}{(1-2xz + z^2)^\lambda}.
\]
Conversely, given $-1 \le t_1 < t_2 < \dots < t_m = 1$ and weights $w_1,\dots,w_m \ge 0$ with $t_{m-1} = \cos \theta$ and $w_m = 1$, we obtain a \emph{dual two-point bound} of $\sum_{j=1}^m w_j$ if the inequality
\[
\sum_{j=1}^m w_j C^{n/2-1}_k(t_j) \ge 0
\]
holds for all $k \ge 0$. Note that these inequalities always imply that $f(1)/f_0 \ge \sum_{j=1}^m w_j$. It follows that when $f(1)/f_0 = \sum_{j=1}^m w_j$, we can conclude that both bounds are fully optimized.

Table~\ref{table:LPbounds2} lists the values of $t_1,\dots,t_m$ and $w_1,\dots,w_m$ for each case in Table~\ref{table:LPbounds}. The corresponding polynomial $f(x)$ is then given by
\[
f(x) = (x-t_1)^2 \dots (x-t_{m-2})^2 (x-t_{m-1}).
\]
An explicit calculation shows that the coefficients $f_0,\dots,f_d$ are nonnegative and that $f(1)/f_0 = \sum_{j=1}^m w_j$. For the dual bound, we need to check the inequalities
\[
\sum_{j=1}^m w_j C^{n/2-1}_k(t_j) \ge 0
\]
for each $k \ge 0$. Reducing this verification to checking finitely many cases is the crux of the proof.

To do so, we will obtain bounds for the Gegenbauer polynomials by using the Cauchy residue theorem to write
\[
C^{n/2-1}_k(t) = \frac{1}{2\pi i} \int_{|z| = 1-1/k} \frac{dz}{(1-2tz+z^2)^{n/2-1} z^{k+1}}.
\]
If $-1 < t < 1$, then the integrand has poles at $r_{\pm} = t \pm i\sqrt{1-t^2}$ and $0$. We can factor $1-2tz+z^2$ as $(z-r_+)(z-r_-)$. Because $|r_+-r_-| = 2\sqrt{1-t^2}$, the greater of $|z-r_-|$ and $|z-r_+|$ is always at least $\sqrt{1-t^2}$, while the lesser is at least $1/k$ for $|z|=1-1/k$ since $|r_\pm|=1$. Thus,
\[
|C_k^{n/2-1}(t)| \le \frac{k^{n/2-1}}{(1-1/k)^k (1-t^2)^{(n-2)/4}}.
\]
In particular, for $k \ge 6$ and $-1 < t < 1$, 
\[
|C_k^{n/2-1}(t)| \le \frac{3k^{n/2-1}}{(1-t^2)^{(n-2)/4}}.
\]
This bound is somewhat crude, but it has the advantage of being simple and explicit. The true growth rate is $k^{n/2-2}$, and this improvement would be important for $n \le 4$, but in our cases it is not needed.

For comparison, the generating function implies that
\[
C_k^{n/2-1}(1) = \binom{n+k-3}{n-3},
\]
which grows proportionally to $k^{n-3}$ as $k \to \infty$. Because $n>4$ and $t_1 \ne -1$, the upper bound for $|C_k^{n/2-1}(t)|$ grows more slowly than $k^{n-3}$ as $k$ grows. Thus,
\[
\sum_{j=1}^m w_j C^{n/2-1}_k(t_j) \sim C^{n/2-1}_k(1) = \binom{n+k-3}{n-3}
\]
as $k \to \infty$, and so the sum must be nonnegative in all but finitely many cases. Specifically, it is nonnegative for each $k \ge 6$ that satisfies
\[
\binom{n+k-3}{n-3} \ge \sum_{j=1}^{m-1} \frac{3w_j}{(1-t_j^2)^{(n-2)/4}}k^{n/2-1}. 
\]
If we use the lower bound
\[
\binom{n+k-3}{n-3} \ge \frac{k^{n-3}}{(n-3)!},
\]
we find that the desired inequality holds whenever
\[
k^{n/2-2} \ge (n-3)! \sum_{j=1}^{m-1}\frac{3w_j}{(1-t_j^2)^{(n-2)/4}}
\]
and $k \ge 6$. Checking it for all smaller $k$ via explicit computation with interval arithmetic completes the proof of validity for the dual bound. In the cases listed in Table~\ref{table:LPbounds2}, the worst case is $n=1024$, for which we must check up through $k = 583999$ directly. The data set \cite{CdLL24} includes computer code for this verification.

\section{Constructions of spherical codes}
\label{app:constructions}

There are three codes listed in Table~\ref{table:smallcodes} for which we do not know of prior references, and three for which the references we know do not give exact constructions. In this appendix we construct these codes.

\subsection{$26$ points in $\R^4$}
Let $\alpha = 0.210938\dots$ be a root of
\[
\begin{split}
2809x^{12} &- 16854x^{11} + 42689x^{10} - 58950x^9 + 47966x^8 - 23558x^7\\
\phantom{} &+ 7305x^6 - 1658x^5 + 250x^4 - 18x^3 + 30x^2 - 11x + 1.
\end{split}
\]
Then the $26$ points are the orbits of the column vector $\big(\sqrt{\alpha},0,-\sqrt{1-\alpha},0\big)$ under the group generated by the matrices
\[
\begin{bmatrix}
\cos 2\pi/13  & -\sin2\pi/13 & 0 & 0\\
\sin 2\pi/13 & \cos 2\pi/13 & 0 & 0\\
0 & 0 & \cos 10\pi/13 & -\sin 10\pi/13\\
0 & 0 & \sin 10\pi/13 & \cos 10\pi/13
\end{bmatrix}
\quad\text{and}\quad
\begin{bmatrix}
0 & 0 & 1 & 0\\
0 & 0 & 0 & 1\\
1 & 0 & 0 & 0\\
0 & -1 & 0 & 0
\end{bmatrix}.
\]

\subsection{$50$ points in $\R^4$}
Let $\alpha = 0.237772\dots$ be a root of
\[
\begin{split}
90601x^{20} &- 906010x^{19} + 2668465x^{18} + 1805100x^{17} - 24135620x^{16} + 35112868x^{15}\\
\phantom{} &+ 52674165x^{14} - 254707285x^{13} + 459094925x^{12} - 660243745x^{11}\\
\phantom{} &+ 962656504x^{10} - 1212562570x^9 + 1105685255x^8 - 686386675x^7\\
\phantom{} &+ 283874785x^6 - 76272968x^5 + 12691245x^4 - 1189505x^3\\
\phantom{} &+ 51000x^2 - 535x + 1.
\end{split}
\]
Then the $50$ points are the orbits of the column vector $\big(\sqrt{\alpha},0,-\sqrt{1-\alpha},0\big)$ under the group generated by the matrices
\[
\begin{bmatrix}
\cos 2\pi/25  & -\sin2\pi/25 & 0 & 0\\
\sin 2\pi/25 & \cos 2\pi/25 & 0 & 0\\
0 & 0 & \cos 14\pi/25 & -\sin 14\pi/25\\
0 & 0 & \sin 14\pi/25 & \cos 14\pi/25
\end{bmatrix}
\quad\text{and}\quad
\begin{bmatrix}
0 & 0 & 1 & 0\\
0 & 0 & 0 & 1\\
1 & 0 & 0 & 0\\
0 & -1 & 0 & 0
\end{bmatrix}.
\]

\subsection{$18$ points in $\R^7$}
We identify $\R^{6}$ with $\C^3$; for each $x \in \C^3$ and $y \in \R$, we write $(x,y)$ for their concatenation in $\R^{7}$. Let $(\alpha_1,\dots,\alpha_4)$ be
\[
\left(-\sqrt{\frac{-277+168\sqrt{3}}{47}},-\sqrt{\frac{-331+196\sqrt{3}}{47}},
-\sqrt{\frac{-55+32\sqrt{3}}{47}},\sqrt{\frac{-1+4\sqrt{3}}{47}}\right),
\]
and let $\beta_i = \sqrt{1-\alpha_i^2}$; we also set $\omega = -1/2 + \sqrt{3}i/2$. Given a vector $x \in \C^3$, let $f_i(x)$ be the concatenation $(\beta_i x/|x|,\alpha_i)$, which is a unit vector in $\R^7$. Then the $18$-point code in $\R^7$ consists of $f_1(x)$ for all permutations $x$ of $(1-i,1-i,(1+i)\omega)$, $f_2(x)$ for all permutations $x$ of $\big(\omega+1,\omega+1,-(\omega+1)/\sqrt{3}\big)$, $f_3(x)$ for all permutations $x$ of $\big(\omega-i,\omega-i,\big(2-\sqrt{3}\big)(\omega+1)\big)$, and $f_4(x)$ for all permutations $x$ of one of $(1,0,0)$, $(-i,0,0)$, or $(\omega,0,0)$.

\subsection{$36$ points in $\R^{13}$}
We found this code via numerical optimization, after which examining its structure led to the following construction. Let $\alpha = \sqrt{(-1+4\sqrt{6})/95}$ and $\beta = \sqrt{1-\alpha^2}$. Each point in the code will have $\pm\alpha$ as its thirteenth coordinate. To describe the other twelve coordinates, we identify $\R^{12}$ with $\C^6$; for each $x \in \C^6$ and $y \in \R$, we write $(x,y)$ for their concatenation in $\R^{13}$. Let $e_1,\dots,e_6$ be the standard basis of $\C^6$, and let $\omega$ be a primitive cube root of unity. Then $18$ of the points of the code are $(\omega^i \beta e_j, \alpha)$ for $0\le i \le 2$ and $1 \le j \le 6$. To construct the remaining $18$ points, we will use the hexacode, which is a linear code over the finite field $\F_4$. We abuse notation by writing $\F_4 = \{0, 1, \omega, \omega^2\}$; these are of course not the complex numbers with the same name, but we will identify them with those complex numbers shortly. In this notation, the hexacode is the subspace of $\F_4^6$ consisting of the points
\[
(a,b,c, a+b+c, a+b\omega+c\omega^2, a+b\omega^2+c\omega)
\]
with $a,b,c \in \F_4$. One can check that there are $18$ points in the hexacode for which all six coordinates are nonzero. We identify these $18$ points with the notationally identical vectors $v \in \C^6$. Then the remaining $18$ points in the spherical code are the points $\big(-\beta v/\sqrt{6},-\alpha\big)$.

\subsection{$60$ points in $\R^{19}$ or $\R^{20}$}
These codes are the kissing configurations of the code with $81$ points in $\R^{20}$ and the $M_{22}$ code, respectively. In other words, they consist of the nearest neighbors of any points in those codes.


\begin{thebibliography}{10}

\bibitem{A52}
J.~Acz\'el, \emph{Solution to Problem 35} (Hungarian), Mat.\ Lapok \textbf{3} (1952), no.~1, 94--95.

\bibitem{A97}
P.~G.~Adams, \emph{A numerical approach to Tamme's problem in Euclidean $n$-space},
Master of Science thesis, Oregon State University, 1997, available at \url{https://hdl.handle.net/1957/33911}.

\bibitem{A99}
N.~N.~Andreev, \emph{A spherical code}, Russian Math.\ Surveys \textbf{54} (1999), no.~1, 251--253. \doi{10.1070/rm1999v054n01ABEH000123}

\bibitem{AB00}
V.~V.~Arestov and A.~G.~Babenko,
\emph{Estimates for the maximal value of the angular code distance for $24$ and $25$ points on the unit sphere in $\R^4$}, Math.\ Notes \textbf{68} (2000), no.~4, 419--435. \doi{10.1007/BF02676721}

\bibitem{BV08}
C.\ Bachoc and F.\ Vallentin, \emph{New upper
bounds for kissing numbers from semidefinite programming},
J.\ Amer.\ Math.\ Soc.\ \textbf{21} (2008), no.~3,
909--924. \arXiv{math/0608426}
\doi{10.1090/S0894-0347-07-00589-9}

\bibitem{BV09}
C.~Bachoc and F.~Vallentin, \emph{Optimality and uniqueness of the $(4,10,1/6)$ spherical code}, J.\ Combin.\ Theory Ser.\ A \textbf{116} (2009), no.~1, 195--204. \arXiv{0708.3947} \doi{10.1016/j.jcta.2008.05.001}

\bibitem{BBCGKS09}
B.~Ballinger, G.~Blekherman, H.~Cohn, N.~Giansiracusa,E.~Kelly, and A.~Sch\"urmann, 
\emph{Experimental study of energy-minimizing point configurations on spheres},
Experiment.\ Math.\ \textbf{18} (2009), no.~3, 257--283, available at \url{https://projecteuclid.org/euclid.em/1259158465}.
\arXiv{math/0611451}

\bibitem{BEKS07}
J.~Bezanson, A.~Edelman, S.~Karpinski, and V.~Shah,
\emph{Julia: a fresh approach to numerical computing}, 
SIAM Rev.\ \textbf{59} (2017), no.~1, 65--98.
\arXiv{1411.1607} \doi{10.1137/141000671}

\bibitem{B78}
K.~B\"or\"oczky, 
\emph{Packing of spheres in spaces of constant curvature},
Acta Math.\ Acad.\ Sci.\ Hungar.\ \textbf{32} (1978), no.~3--4, 243--261. \doi{10.1007/BF01902361}

\bibitem{B04}
K.~B\"or\"oczky, Jr., \emph{Finite packing and covering},
Cambridge Tracts in Math.\ \textbf{154},
Cambridge University Press, Cambridge, 2004. \doi{10.1017/CBO9780511546587}

\bibitem{BG09}
M.~J.~Bowick and L.~Giomi, \emph{Two-dimensional
matter: order, curvature and defects}, Adv.\ in Phys.\ \textbf{58} (2009), no.~5,
449--563. \arXiv{0812.3064} \doi{10.1080/00018730903043166}

\bibitem{BV04}
S.~Boyd and L.~Vandenberghe, 
\emph{Convex optimization},
Cambridge University Press, Cambridge, 2004. \doi{10.1017/CBO9780511804441}

\bibitem{B83}
A.~E.~Brouwer, \emph{The uniqueness of the strongly regular graph on $77$ points},
J.\ Graph Theory \textbf{7} (1983), no.~4, 455--461. \doi{10.1002/jgt.3190070411}

\bibitem{BH92}
A.~E.~Brouwer and W.~H.~Haemers, 
\emph{Structure and uniqueness of the $(81,20,1,6)$ strongly regular graph},
Discrete Math.\ \textbf{106/107} (1992), 77--82. \doi{10.1016/0012-365X(92)90532-K}

\bibitem{BH93}
A.~E.~Brouwer and W.~H.~Haemers, 
\emph{The Gewirtz graph: an exercise in the theory of graph spectra},
European J.\ Combin.\ \textbf{14} (1993), no.~5, 397--407.
\doi{10.1006/eujc.1993.1044}

\bibitem{BvM22}
A.~E.~Brouwer and H.~Van Maldeghem,
\emph{Strongly regular graphs},
Encyclopedia Math.\ Appl.\ \textbf{182},
Cambridge University Press, Cambridge, 2022.
\doi{10.1017/9781009057226}

\bibitem{CCKS97}
A.~R.~Calderbank, P.~J.~Cameron, W.~M.~Kantor, and J.~J.~Seidel, 
\emph{$\Z_4$-Kerdock codes, orthogonal spreads, and extremal Euclidean line-sets},
Proc.\ London Math.\ Soc.\ (3) \textbf{75} (1997), no.~2, 436--480.
\doi{10.1112/S0024611597000403}

\bibitem{CGS78}
P.~J.~Cameron, J.~M.~Goethals, and J.~J.~Seidel,
\emph{Strongly regular graphs having strongly regular subconstituents},
J.\ Algebra \textbf{55} (1978), no.~2, 257--280.
\doi{10.1016/0021-8693(78)90220-X}

\bibitem{CS73}
P.~J.~Cameron and J.~J.~Seidel,
\emph{Quadratic forms over $GF(2)$},
Indag.\ Math.\ \textbf{35} (1973), 1--8.
\doi{10.1016/1385-7258(73)90014-0}

\bibitem{C10} H.~Cohn, \emph{Order and disorder in
energy minimization}, Proceedings of
the International Congress of Mathematicians, Hyderabad, August 19--27,
2010, Volume IV, pages 2416--2443, Hindustan Book Agency, New Delhi,
2010. \arXiv{1003.3053} \doi{10.1142/9789814324359_0152}

\bibitem{C17}
H.~Cohn,
\emph{Packing, coding, and ground states}, in \emph{Mathematics and materials} (M.~J.~Bowick, D.~Kinderlehrer, G.~Menon and C.~Radin, eds.), pp.~45--102,
IAS/Park City Math.\ Ser.\ \textbf{23},
American Mathematical Society, Providence, RI, 2017. \arXiv{1603.05202}

\bibitem{C22}
H.~Cohn,
\emph{Small spherical and projective codes},
data set, DSpace@MIT, 2022, available at
\url{https://hdl.handle.net/1721.1/142661}.

\bibitem{codetables} H.~Cohn, \emph{Table of spherical codes}, data set,
DSpace@MIT, 2024, available at
\url{https://hdl.handle.net/1721.1/153543}.

\bibitem{CCEK07}
H.~Cohn, J.~H.~Conway, N.~D.~Elkies, and A.~Kumar,
\emph{The $D_4$ root system is not universally optimal},
Experiment.\ Math.\ \textbf{16} (2007), no.~3, 313--320,
available at
\url{https://projecteuclid.org/euclid.em/1204928532}.
\arXiv{math/0607447}

\bibitem{CJKT11}
H.~Cohn, Y.~Jiao, A.~Kumar, and S.~Torquato, 
\emph{Rigidity of spherical codes},
Geom.\ Topol.\ \textbf{15} (2011), no.~4, 2235--2273.
\arXiv{1102.5060} \doi{10.2140/gt.2011.15.2235}

\bibitem{CK07}
H.~Cohn and A.~Kumar, \emph{Universally optimal
distribution of points on spheres}, J.\ Amer.\ Math.\ Soc.\ \textbf{20} (2007), no.~1,
99--148. \arXiv{math/0607446} \doi{10.1090/S0894-0347-06-00546-7}

\bibitem{CKMRV17}
H.~Cohn, A.~Kumar, S.~D.~Miller, D.~Radchenko, and M.~Viazovska, \emph{The
  sphere packing problem in dimension 24}, Ann.\ of Math.\ (2) \textbf{185}
  (2017), no.~3, 1017--1033. \arXiv{1603.06518}
  \doi{10.4007/annals.2017.185.3.8}

\bibitem{CdLL24}
H.~Cohn, D.~de Laat, and N.~Leijenhorst,
\emph{Data for ``Optimality of spherical codes via exact semidefinite programming bounds''},
data set, DSpace@MIT, 2024, available at \url{https://hdl.handle.net/1721.1/153928}.

\bibitem{CW12}
H.~Cohn and J.~Woo, \emph{Three-point bounds for
energy minimization}, J.\ Amer.\ Math.\ Soc.\ \textbf{25}
(2012), no.~4, 929--958. \arXiv{1103.0485} \doi{10.1090/S0894-0347-2012-00737-1}

\bibitem{CHS96}
J.~H.~Conway, R.~H.~Hardin, and N.~J.~A.~Sloane,
\emph{Packing lines, planes, etc.: packings in Grassmannian spaces},
Experiment.\ Math.\ \textbf{5} (1996), no.~2, 139--159, available at
\url{https://projecteuclid.org/euclid.em/1047565645}.

\bibitem{CS95}
J.~H.~Conway and N.~J.~A.~Sloane, \emph{What are all the best sphere packings in low dimensions?}, Discrete Comput.\ Geom.\ \textbf{13} (1995), no.~3-4, 383--403. \doi{10.1007/BF02574051}

\bibitem{CS99}
J.~H.~Conway and N.~J.~A.~Sloane, \emph{Sphere
packings, lattices and groups}, third edition,
Grundlehren der Mathematischen Wissenschaften \textbf{290},
Springer, New York, 1999. \doi{10.1007/978-1-4757-6568-7}

\bibitem{D86}
L.~Danzer, \emph{Finite point-sets on $S^2$ with minimum distance as large as possible}, Discrete Math.\ \textbf{60} (1986), 3--66. \doi{10.1016/0012-365X(86)90002-6}

\bibitem{DH51}
H.~Davenport and Gy.~Haj\'os, \emph{Problem 35} (Hungarian), Mat.\ Lapok \textbf{2} (1951), no.~1, 68.

\bibitem{D72}
P.~Delsarte, \emph{Bounds for unrestricted codes, by linear programming},
Philips Res.\ Rep.\ \textbf{27} (1972), 272--289.

\bibitem{DGS75}
P.~Delsarte, J.~M.~Goethals, and J.~J.~Seidel, \emph{Bounds for systems of lines and Jacobi polynomials}, Philips Research Reports \textbf{30} (1975), 91--105; reprinted in \emph{Geometry and combinatorics: selected works of J.~J.~Seidel} (D.~G.~Corneil and R.~Mathon, eds.), pp.~193--207, Academic Press, Inc., Boston, MA, 1991. \doi{10.1016/B978-0-12-189420-7.50020-7}

\bibitem{DGS77}
P.~Delsarte, J.~M.~Goethals, and J.~J.~Seidel, \emph{Spherical codes and designs}, Geometriae Dedicata \textbf{6} (1977), no.~3, 363--388. \doi{10.1007/bf03187604}

\bibitem{DL97}
M.~M.~Deza and M.~Laurent,
\emph{Geometry of cuts and metrics},
Algorithms Combin.\ \textbf{15}, Springer-Verlag, Berlin, 1997.
\doi{10.1007/978-3-642-04295-9}

\bibitem{D82}
J.~D.~Dixon,
\emph{Exact solution of linear equations using $p$-adic expansions},
Numer.\ Math.\ \textbf{40} (1982), no.~1, 137--141.
\doi{10.1007/BF01459082}

\bibitem{DLM21}
M.~Dostert, D.~de Laat, and P.~Moustrou,
\emph{Exact semidefinite programming bounds for packing problems},
SIAM J.\ Optim.\ \textbf{31} (2021), no.~2, 1433--1458. \arXiv{2001.00256} \doi{10.1137/20M1351692}

\bibitem{EZ01}
T.~Ericson and V.~Zinoviev, \emph{Codes on Euclidean spheres}, North-Holland Mathematical Library \textbf{63}, North-Holland Publishing Co., Amsterdam, 2001.

\bibitem{FT43}
L.~Fejes, \emph{\"Uber eine Absch\"atzung des k\"urzesten Abstandes zweier Punkte eines auf einer Kugelfl\"ache liegenden Punktsystems}, Jber.\ Deutsch.\ Math.-Verein.\ \textbf{53} (1943), 66--68.

\bibitem{FT64}
L.~Fejes~T\'{o}th, \emph{Regular figures}, A
Pergamon Press Book, The Macmillan Co., New York, 1964.

\bibitem{FHTJ17}
C.~Fieker, W.~Hart, T.~Hofmann, and F.~Johansson, \emph{Nemo/Hecke: computer algebra and number theory packages for the Julia programming language}, ISSAC '17: Proceedings of the 2017 ACM International Symposium on Symbolic and Algebraic Computation, pp.~157--164, Association for Computing Machinery, New York, 2017. \arXiv{1705.06134} \doi{10.1145/3087604.3087611}

\bibitem{G69a}
A.~Gewirtz,
\emph{Graphs with maximal even girth},
Canadian J.\ Math.\ \textbf{21} (1969), 915--934. \doi{10.4153/CJM-1969-101-9}

\bibitem{G69b}
A.~Gewirtz,
\emph{The uniqueness of $g(2,2,10,56)$},
Trans.\ New York Acad.\ Sci.\ (2)  \textbf{31} (1969), no.~6, 656--675.
\doi{10.1111/j.2164-0947.1969.tb01990.x}

\bibitem{G00}
T.~Gosset, \emph{On the regular and semi-regular figures in space of $n$ dimensions}, Messenger Math.\ \textbf{29} (1900), 43--48.

\bibitem{HKR02}
M.~Halick\'a, E.~de Klerk, and C.~Roos,
\emph{On the convergence of the central path in semidefinite optimization},
SIAM J.\ Optim.\ \textbf{12} (2002), no.~4, 1090--1099.
\doi{10.1137/S1052623401390793}

\bibitem{HKCSS95}
A.~R.~Hammons, Jr., P.~V.~Kumar, A.~R.~Calderbank, N.~J.~A.~Sloane, and P.~Sol\'e,
\emph{The $\Z_4$-linearity of Kerdock, Preparata, Goethals, and related codes},
IEEE Trans.\ Inform.\ Theory \textbf{40} (1994), no.~2, 301--319. \arXiv{math/0207208}
\doi{10.1109/18.312154}

\bibitem{HS68}
D.~G.~Higman and C.~C.~Sims,
\emph{A simple group of order 44,352,000},
Math.\ Z.\ \textbf{105} (1968), 110--113. 
\doi{10.1007/BF01110435}

\bibitem{HW87}
R.~D.~Hill and S.~R.~Waters,
\emph{On the cone of positive semidefinite matrices},
Linear Algebra Appl.\ \textbf{90} (1987), 81--88.
\doi{10.1016/0024-3795(87)90307-7}

\bibitem{HS60}
A.~J.~Hoffman and R.~R.~Singleton,
\emph{On Moore graphs with diameters $2$ and $3$},
IBM J.\ Res.\ Develop.\ \textbf{4} (1960), no.~5, 497--504.
\doi{10.1147/rd.45.0497}

\bibitem{HL12}
E.~Hubert and G.~Labahn,
\emph{Rational invariants of scalings from Hermite normal forms}, ISSAC 2012: Proceedings of the 37th International Symposium on Symbolic and Algebraic Computation, 
pp.~219--226, Association for Computing Machinery (ACM), New York, 2012.
\doi{10.1145/2442829.2442862}

\bibitem{KL78}
G.~A.~Kabatyanskii and V.~I.~Levenshtein,
\emph{Bounds for packings on a sphere and in space},
Probl.\ Inf.\ Transm.\ \textbf{14} (1978), no.~1, 1--17.

\bibitem{K82a}
W.~M.~Kantor,
\emph{Spreads, translation planes and Kerdock sets. II},
SIAM J.\ Algebraic Discrete Methods \textbf{3} (1982), no.~3, 308--318. \doi{10.1137/0603032}

\bibitem{K82b}
W.~M.~Kantor,
\emph{An exponential number of generalized Kerdock codes}, Inform.\ and Control \textbf{53} (1982), no.~1--2, 74--80.
\doi{10.1016/S0019-9958(82)91139-1}

\bibitem{K72}
A.~M.~Kerdock,
\emph{A class of low-rate nonlinear binary codes},
Information and Control \textbf{20} (1972), no.~2, 182--187.
\doi{10.1016/S0019-9958(72)90376-2}

\bibitem{KW17}
M.~H.~Klin and A.~J.~Woldar, 
\emph{The strongly regular graph with parameters $(100,22,0,6)$: hidden history and beyond},
Acta Univ.\ M.\ Belii Ser.\ Math.\ \textbf{25} (2017), 5--62.

\bibitem{K95}
H.~K\"onig,
\emph{Isometric imbeddings of Euclidean spaces into finite-dimensional $l_p$-spaces},
in \emph{Panoramas of mathematics} (B.~Jakubczyk, S.~Janeczko, and B.~Ziemian, eds.), pp.~79--87,
Banach Center Publ.\ \textbf{34},
Polish Academy of Sciences, Institute of Mathematics, Warsaw, 1995.
\doi{10.4064/-34-1-79-87}

\bibitem{KZ73}
A.~Korkine and G.~Zolotareff, 
\emph{Sur les formes quadratiques},
Math.\ Ann.\ \textbf{6} (1873), 366--389. \doi{10.1007/BF01442795}

\bibitem{K91}
D.~A.~Kottwitz, \emph{The densest packing of equal circles on a sphere}, Acta Cryst.\ Sect.~A \textbf{47} (1991), no.~3, 158--165. \doi{10.1107/S0108767390011370}

\bibitem{LV15}
D.~de~Laat and F.~Vallentin, \emph{A
semidefinite programming hierarchy for packing problems in discrete
geometry}, Math.\ Program.\ \textbf{151} (2015), no.~2, Ser.\ B,
529--553. \arXiv{1311.3789}
\doi{10.1007/s10107-014-0843-4}

\bibitem{L67}
J.~Leech, \emph{Notes on sphere packings},
Canadian J.\ Math.\ \textbf{19} (1967), 251--267.
\doi{10.4153/CJM-1967-017-0}

\bibitem{L67b}
J.~Leech, \emph{Five dimensional non-lattice sphere packings},
Canad.\ Math.\ Bull.\ \textbf{10} (1967), no.~3, 387--393. \doi{10.4153/CMB-1967-037-5}

\bibitem{LL22}
N.~Leijenhorst and D.~de Laat,
\emph{Solving clustered low-rank semidefinite programs arising from polynomial optimization},
preprint, 2022. \arXiv{2202.12077}

\bibitem{LLL82}
A.~K.~Lenstra, H.~W.~Lenstra, Jr., and L.~Lov\'asz,
\emph{Factoring polynomials with rational coefficients},
Math.\ Ann.\ \textbf{261} (1982), no.\ 4, 515--534.
\doi{10.1007/BF01457454}

\bibitem{L79}
V.~I.~Leven{\v{s}}te{\u\i}n, \emph{On bounds for packings in $n$-dimensional Euclidean space},
Soviet Math.\ Dokl.\ \textbf{20} (1979), no.~2, 417--421.

\bibitem{L82} 
V.~I.~Leven{\v{s}}te{\u\i}n, \emph{Bounds for the maximal cardinality of a code with bounded modules % sic
of the inner product},
Soviet Math.\ Dokl.\ \textbf{25} (1982), no.~2, 526--531.

\bibitem{L87} 
V.~I.~Levenshtein,
\emph{Packing of polynomial metric spaces},
in \emph{Third International Workshop on Information Theory \guillemotleft Convolutional Codes; Multi-user Communication\guillemotright}, pp.~271--274, Institute for Information Transmission Problems, Academy of Sciences of the Soviet Union, Sochi, 1987.

\bibitem{L92}
V.~I.~Levenshtein,
\emph{Designs as maximum codes in polynomial metric spaces},
Acta Appl.\ Math.\ \textbf{29} (1992), no.~1--2, 1--82.
\doi{10.1007/BF00053379}

\bibitem{LS66}
J.~H.~van Lint and J.~J.~Seidel, \emph{Equilateral point sets in elliptic geometry}, Indag.\ Math.\ \textbf{28} (1966), 335--348, available at
\url{https://research.tue.nl/en/publications/equilateral-point-sets-in-elliptic-geometry}.

\bibitem{GP04}
K.~Gatermann and P.~A.~Parrilo,
\emph{Symmetry groups, semidefinite programs, and sums of squares},
J.\ Pure Appl.\ Algebra \textbf{192} (2004), no.~1--3, 95--128.
\arXiv{math/0211450}
\doi{10.1016/j.jpaa.2003.12.011}

\bibitem{MO18}
F.~C.~Machado and F.~M.~de Oliveira Filho,
\emph{Improving the semidefinite programming bound for the kissing number by exploiting polynomial symmetry},
Exp.\ Math.\ \textbf{27} (2018), no.~3, 362--369.
\arXiv{1609.05167}
\doi{10.1080/10586458.2017.1286273}

\bibitem{M80}
A.~L.~Mackay, \emph{The packing of three-dimensional spheres on the surface of a four-dimensional hypersphere}, J.\ Phys.\ A \textbf{13} (1980), no.~11, 3373--3379. \doi{10.1088/0305-4470/13/11/011}

\bibitem{M56}
D.~M.~Mesner, \emph{An investigation of certain combinatorial properties of partially balanced incomplete block experimental designs and association schemes, with a detailed study of designs of Latin square and related types}, Ph.D.\ thesis, Michigan State University, 1956. \doi{10.25335/M5N873D4X}

\bibitem{M64}
D.~M.~Mesner, \emph{Negative Latin square designs}, Institute of Statistics mimeo series 410, Department of Statistics, North Carolina State University, 1964, available at \url{https://www.lib.ncsu.edu/resolver/1840.4/2435}.

\bibitem{M07}
O.~R.~Musin,
\emph{Multivariate positive definite functions on spheres}, in \emph{Discrete geometry and algebraic combinatorics} (A.~Barg and O.~R.~Musin, eds.), pp.~177--190,
Contemp.\ Math.\ \textbf{625},
American Mathematical Society, Providence, RI, 2014.
\arXiv{math/0701083}
\doi{10.1090/conm/625/12498}

\bibitem{M08}
O.~R.~Musin, \emph{The kissing number in four dimensions}, Ann.\ of Math.\ (2)
\textbf{168} (2008), no.~1, 1--32. \arXiv{math/0309430}
\doi{10.4007/annals.2008.168.1}

\bibitem{MT12}
O.~R.~Musin and A.~S.~Tarasov, \emph{The strong thirteen spheres problem}, Discrete Comput.\ Geom.\ \textbf{48} (2012), no.~1, 128--141. 
\arXiv{1002.1439} \doi{10.1007/s00454-011-9392-2}

\bibitem{MT15}
O.~R.~Musin and A.~S.~Tarasov, \emph{The Tammes problem for $N=14$}, Exp.\ Math.\ \textbf{24} (2015), no.~4, 460--468.
\arXiv{1410.2536} \doi{10.1080/10586458.2015.1022842}

\bibitem{N10}
J.~Nie, K.~Ranestad, and B.~Sturmfels,
\emph{The algebraic degree of semidefinite programming},
Math.\ Program., Ser.\ A \textbf{122} (2010), no.~2, 379--405.
\arXiv{math/0611562}
\doi{10.1007/s10107-008-0253-6}

\bibitem{NR68}
A.~W.~Nordstorm and J.~P.~Robinson,
\emph{An optimum nonlinear code},
Information and Control \textbf{11} (1968), no.~5--6, 613--616. % publishers say 1967 but paper says 1968?
\doi{10.1016/S0019-9958(67)90835-2}

\bibitem{N95}
K.~J.~Nurmela, \emph{Constructing spherical codes by global optimization methods}, Research Report A32, Digital Systems Laboratory, Helsinki University of Technology, 1995, available at \url{https://research.ics.aalto.fi/publications/info/bibdb.HUT-TCS-A32.shtml}.

\bibitem{OS79}
A.~M.~Odlyzko and N.~J.~A.~Sloane, 
\emph{New bounds on the number of unit spheres that can touch a unit sphere in $n$ dimensions},
J.\ Combin.\ Theory Ser.\ A \textbf{26} (1979), no.~2, 210--214.
\doi{10.1016/0097-3165(79)90074-8}

\bibitem{PY19}
D.~Papp and S.~Y\i{}ld\i{}z,
\emph{Sum-of-squares optimization without semidefinite programming}, 
SIAM J.\ Optim.\ \textbf{29} (2019), no.~1, 822--851. 
\arXiv{1712.01792}
\doi{10.1137/17M1160124}

\bibitem{PS01}
P.~A.~Parrilo and B.~Sturmfels,
\emph{Minimizing polynomial functions}, in
\emph{Algorithmic and quantitative real algebraic geometry (Piscataway, NJ, 2001)}, pp.~83--99,
DIMACS Ser.\ Discrete Math.\ Theoret.\ Comput.\ Sci.\ \textbf{60},
American Mathematical Society, Providence, RI, 2003.
\arXiv{math/0103170}
\doi{10.1090/dimacs/060/08}

\bibitem{P15}
K.~Phelps,
\emph{Enumeration of Kerdock codes of length 64},
Des.\ Codes Cryptogr.\ \textbf{77} (2015), no.~2--3, 357--363. \doi{10.1007/s10623-015-0053-y}

\bibitem{R55}
R.~A.~Rankin, \emph{The closest packing of spherical caps in $n$ dimensions}, Proc.\ Glasgow Math.\ Assoc.\ \textbf{2} (1955), no.~3, 139--144. \doi{10.1017/S2040618500033219}

\bibitem{R61}
R.~M.~Robinson, \emph{Arrangements of 24 points on a sphere}, Math.\ Ann.\ \textbf{144} (1961), 17--48. \doi{10.1007/BF01396539}

\bibitem{S01}
L.~Schl\"afli, \emph{Theorie der vielfachen Kontinuit\"at}, Denkschriften der Schweizerischen naturforschenden Gesellschaft, vol.~38, J.~H.~Graf, Bern, 1901.

\bibitem{S05}
A.~Schrijver, \emph{New code upper bounds from the
Terwilliger algebra and semidefinite programming}, IEEE Trans.\ Inform.\ Theory \textbf{51} (2005), no.~8,
2859--2866. \doi{10.1109/TIT.2005.851748}

\bibitem{SW51}
K.~Sch\"utte and B.~L.~van der Waerden, \emph{Auf welcher Kugel haben 5, 6, 7, 8 oder 9 Punkte mit Mindestabstand Eins Platz?}, Math.\ Ann.\ \textbf{123} (1951), 96--124. \doi{10.1007/BF02054944}

\bibitem{SHS}
N.~J.~A.~Sloane, with the collaboration of R.~H.~Hardin, W.~D.~Smith, and others, \emph{Tables of spherical codes}, 1994--1996, incorporated into \cite{codetables}.

\bibitem{S73}
S.~L.~Snover, \emph{The uniqueness of the Nordstrom-Robinson and the Golay binary codes},
Ph.D.\ dissertation, Michigan State University, 1973. \doi{10.25335/M56D5PM3R}

\bibitem{S52}
T.~Szele, \emph{Solution to Problem 35} (Hungarian), Mat.\ Lapok \textbf{3} (1952), no.~1, 95.

\bibitem{S23}
F.~Sz\"oll\H{o}si, \emph{A note on five dimensional kissing arrangements}, preprint, 2023, \arXiv{2301.08272}.

\bibitem{T83}
T.~Tarnai, \emph{Packing of $180$ equal circles on a sphere}, Elem.\ Math.\ \textbf{38} (1983), no.~5, 119--122. \doi{10.5169/seals-37194}

\bibitem{WF89}
W.~K.~Wootters and B.~D.~Fields,
\emph{Optimal state-determination by mutually unbiased measurements},
Ann.\ Physics \textbf{191} (1989), no.~2, 363--381.
\doi{10.1016/0003-4916(89)90322-9}

\bibitem{V17}
M.~S.~Viazovska, \emph{The sphere packing problem in dimension 8}, Ann.\ of
  Math.\ (2) \textbf{185} (2017), no.~3, 991--1015. \arXiv{1603.04246}
  \doi{10.4007/annals.2017.185.3.7}

\bibitem{W09}
J.~Wang,
\emph{Finding and investigating exact spherical codes},
Experiment.\ Math.\ \textbf{18} (2009), no.~2, 249--256, available at \url{https://projecteuclid.org/euclid.em/1259158434}.
\arXiv{0805.0776}

\bibitem{W41}
D.~V.~Widder, 
\emph{The Laplace transform},
Princeton Math.\ Ser.\ \textbf{6}, Princeton University Press, Princeton, NJ, 1941.

\end{thebibliography}
\end{document}